\theoremstyle{definition}
\newtheorem* {theorem*}{Theorem}
\newtheorem* {conjecture*}{Conjecture}
\newtheorem{theorem}{Theorem}[section]
\theoremstyle{definition}
\newtheorem* {example*}{Example}
\newtheorem{lemma}[theorem]{Lemma}
\theoremstyle{definition}
\newtheorem{definition}[theorem]{Definition}
\theoremstyle{definition}
\newtheorem{proposition}[theorem]{Proposition}
\newtheorem{corollary}[theorem]{Corollary}
\newtheorem{remark}[theorem]{Remark}
\theoremstyle{definition}
\newtheorem {example}[theorem]{Example}
\theoremstyle{definition}
\theoremstyle{definition}
\theoremstyle{definition}
\theoremstyle{definition}
\def\({\left(}
\def\){\right)}
\newcommand{\FF}{\mathbb{F}}
\newcommand{\QQ}{\mathbb{Q}}
\newcommand{\cP}{\mathcal{P}}
\def\NN{\mathbb{N}}
\def\Hom{\mathrm{Hom}}
\def\End{\mathrm{End}}
\def\ZZ{\mathbb{Z}}
\def\spanning{\textnormal{-span}}
\newcommand{\cM}{\mathcal{M}}
\newcommand{\sgn}{\mathrm{sgn}}
\def\fk{\mathfrak}
\def\fkm{\mathfrak{m}}
\def\barr{\begin{array}}
\def\earr{\end{array}}
\def\ba{\begin{aligned}}
\def\ea{\end{aligned}}
\def\be{\begin{equation}}
\def\ee{\end{equation}}
\def\quand{\quad\text{and}\quad}
\def\inv{\operatorname{inv}}
\def\Inv{\operatorname{Inv}}
\def\cH{\mathcal H}
\def\cM{\mathcal M}
\def\id{\mathrm{id}}
\def\PP{\mathbb{P}}
\def\ben{\begin{enumerate}}
\def\een{\end{enumerate}}
\def\bei{\begin{itemize}}
\def\eei{\end{itemize}}
\def\D{\mathsf{D}}
\def\Des{\operatorname{Des}}
\def\x{\textbf{x}}
\newcommand{\xRightarrow}[2][]{\ext@arrow 0359\Rightarrowfill@{#1}{#2}}
\renewcommand{\r}[1]{\textcolor{red}{#1}}
\def\arcstart{\ \xy<0cm,-.06cm>\xymatrix@R=.1cm@C=.10cm }
\newcommand{\arcstartc}[1]{\ \xy<0cm,-.15cm>\xymatrix@R=.1cm@C=#1cm}
\def\r{\mathbf{r}}
\def\cG{\mathcal{G}}
\def\sort{\operatorname{sort}}
\def\bar{\overline}
\definecolor{darkred}{rgb}{0.7,0,0} 
\newcommand{\defn}[1]{{\color{darkred}\emph{#1}}} 
\def\AO{\mathsf{AO}}
\def\MAO{\fkm\mathsf{AO}}
\newcommand{\Sym}{\mathsf{Sym}}
\newcommand{\QSym}{\mathsf{QSym}}
\def\wzeta{\bar\zeta}
\def\zetaq{\zeta_{\mathsf{Q}}}
\def\wzetaq{\wzeta_{\mathsf{Q}}}
\def\op{\mathsf{op}}
\def\asc{\operatorname{asc}}
\def\des{\operatorname{des}}
\def\oX{\overset{\to}{X}}
\def\bDelta{\Delta^{(\beta)}}
\def\mQSym{\fkm\QSym}
\def\mSym{\fkm\Sym}
\newcommand{\cO}{\mathcal{D}}
\def\wO{\bar\cO}
\def\wG{\bar\cG}
\def\wP{\bar\cP}
\def\htimes{\mathbin{\bar\otimes}}
\def\bX{\bar X}
\def\Set{\mathsf{Set}}
\def\bare{\bar e}
\def\bGamma{\bar \Gamma}
\def\mL{\cM}
\def\bL{\bar L}
\def\antipode{\mathsf{S}}
\def\wGwt{\mathfrak{m}\mathsf{WGraphs}}
\def\wzetaw{\wzeta_{\mathsf{WGraphs}}}
\def\kam{\overline{\widetilde m}}
\def\align{\mathsf{align}}
\def\SetSSYT{\mathsf{SetSSYT}}
\def\source{\mathsf{source}}
\def\oT{\mathscr{G}}
\def\inc{\mathsf{inc}}
\def\bS{S^{\fk m}}
\def\reduce{\mathsf{reduce}}
\def\cCl{\mathsf{Clan}}
\def\Cl{\cCl}
\def\oCl{\cCl^{\mathsf{dag}}}
\def\cO{\mathsf{DAGs}}
\def\cG{\mathsf{Graphs}}
\def\cP{\mathsf{LPosets}}
\def\mO{\mathfrak{m}\cO}
\def\mG{\mathfrak{m}\cG}
\def\mP{\mathfrak{m}\cP}
\def\bDelta{\blacktriangle}
\def\bars{{\overline s}}
\def\oG{\mathsf{O}\cG}
\def\qX{\overline{L}}
\def\oX{\overline{X}}
\tikzset{every loop/.style={min distance=10 mm, in=60, out=120, looseness=10}}
\tikzset{
dot/.style = {circle, fill, inner sep=2.2,outer sep=0},
}
\numberwithin{equation}{section}
\def\KK{\mathbb{K}}
\def\op{\mathrm{decr}}
\def\ozeta{\zeta_{\oG}}
\begin{document}
\title{Kromatic quasisymmetric functions}
\author{
    Eric Marberg\\
    Department of Mathematics \\
    Hong Kong University of Science and Technology \\
    {\tt eric.marberg@gmail.com}
}

\date{}

\maketitle

\begin{abstract}
We provide a construction for the kromatic symmetric function $\bar X_G$ of a graph introduced by Crew, Pechenik, and Spirkl using combinatorial (linearly compact) Hopf algebras. As an application, we show that $\bar X_G$ has a positive expansion into multifundamental quasisymmetric functions. We also study two related quasisymmetric $q$-analogues of $\bar X_G$, which are $K$-theoretic generalizations of the quasisymmetric chromatic function of Shareshian and Wachs. We classify exactly when one of these analogues is symmetric. For the other, we derive a positive expansion into symmetric Grothendieck functions when $G$ 
is the incomparability graph of a natural unit interval order.
\end{abstract}

\section{Introduction}

The purpose of this note is to re-examine the algebraic
origins of the \defn{kromatic symmetric function} of a graph that was
recently introduced by Crew, Pechenik, and Spirkl \cite{CPS}, and to study two
quasisymmetric analogues of this power series. 

Let $\NN = \{0,1,2,\dots\}$,
$\PP = \{1,2,3,\dots\}$,
and $[n] = \{ i \in \PP : i \leq n\}$ for $n \in \NN$, so that $[0]=\varnothing$.
All graphs are undirected by default, and
are assumed to be simple with a finite set of vertices.
We do not distinguish between isomorphic graphs. 

If $G$ is any graph then 
we write $V(G)$ for its set of vertices and $E(G)$ for its set of edges.
A \defn{proper coloring} of $G$ is a map $\kappa :  V(G) \to \PP$ with $\kappa(u) \neq \kappa(v)$ for all   $\{u,v\} \in E(G)$.
For maps $\kappa : V \to \PP$ let $x^\kappa = \prod_{i \in V } x_{\kappa(i)}$ where $x_1,x_2,\dots$ are commuting variables.

\begin{definition}[Stanley \cite{Stanley95}]
\label{stan-def}
The \defn{chromatic symmetric function} of $G$ is 
the power series $ X_G := \sum_\kappa x^\kappa $
where the sum is over all proper colorings $\kappa$ of $G$.
\end{definition}
 

\begin{example} If $G=K_n$ is the \defn{complete graph} with $V(G)=[n]$ then $X_G$ is $n!$ times 
 the \defn{elementary symmetric function} $e_n := \sum_{1 \leq i_1 <i_2< \dots <i_n} x_{i_1} x_{i_2}\cdots x_{i_n}$.
\end{example}

A poset is \defn{$(3+1)$-free} if it does not contain a 3-element chain $a<b<c$ whose elements are all 
incomparable to some fourth element $d$.
The \defn{incomparability graph} of a poset is the graph whose vertices are the set of elements in the poset and 
whose edges are the unordered pairs $\{x,y\}$ with $x \not \leq y$ and $y \not \leq x$.
The \defn{Stanley--Stembridge conjecture} \cite{stanley.stembridge}
asserts that if $G$ is the incomparability graph of a $(3+1)$-free poset then $X_G$ has a positive expansion into elementary symmetric functions.
This conjecture has several refinements and generalizations \cite{guay,SW2016},
and has been resolved in a number of interesting special cases \cite{dahl,hamel2019,HaradaPrecup}. 
Hikita \cite{Hikita} has recently announced a general proof of the conjecture. 

%
%


Let $G$ be an \defn{ordered graph}, that is, a graph with a total order $<$ on its vertex set $V(G)$.
An \defn{ascent} (respectively, \defn{descent}) of a map $\kappa : V(G) \to \PP$ is an edge $\{u,v\} \in E(G)$ with $u<v$ 
and  $\kappa(u) <\kappa(v)$
(respectively, $\kappa(u)>\kappa(v)$). Let $\asc_{G}(\kappa)$
and
$\des_{G}(\kappa)$ be the number of ascents and descents of $\kappa$.
Shareshian and Wachs \cite{SW2016} introduced the following $q$-analogue of $X_G$:

\begin{definition}[\cite{SW2016}]\label{cqf-def}
The \defn{chromatic quasisymmetric function} of an ordered graph
 $G$ is   $X_G(q) = \sum_\kappa q^{\asc_G(\kappa)} x^\kappa 
 $ where the sum is
 over all proper colorings $\kappa$ of $G$.
 \end{definition}

 \begin{example}\label{nq-ex}  One has $X_{K_n}(q)= [n]_q!   e_n$
 for $[i]_q := \tfrac{1-q^i}{1-q}$ and $[n]_q! := \prod_{i=1}^n [i]_q$.
 \end{example}

  Let $\Set(\PP)$ be the set of finite nonempty subsets of positive integers.
For a map
 $\kappa : V \to \Set(\PP)$
define $x^\kappa = \prod_{i \in V}  \prod_{j \in \kappa(i)} x_j$.
A \defn{proper set-valued coloring} is a map $\kappa : V(G) \to \Set(\PP)$
with $\kappa(u) \cap \kappa(v) = \varnothing$ for all $\{u,v\} \in E(G)$.
Using set-valued colorings in Definition~\ref{stan-def} leads to a ``$K$-theoretic'' analogue of $X_G$:

\begin{definition}[Crew, Pechenik, and Spirkl \cite{CPS}]
The \defn{kromatic symmetric function} of a graph
 $G$ is the power series $\bX_G = \sum_\kappa x^\kappa \in \ZZ\llbracket x_1,x_2,\dots\rrbracket $
 where the sum is
 over all proper set-valued colorings of $G$.
\end{definition}


\begin{example}\label{kro-ex}
 If $G=K_n$ then $ \bX_G= n!\sum_{r= n}^\infty {r \brace n} e_r $
where ${r \brace n}$ is the Stirling number of the second kind.  
\end{example}

\begin{remark}\label{clan-rmk}
Given $\alpha : V \to \PP$, let $ \cCl_\alpha(V)$ be the set of pairs $(v,i)$ with $v \in V$ and $i \in [\alpha(v)]$. 
If $G$ is a graph and  $\alpha : V(G) \to \PP$ is any map, then
the \defn{$\alpha$-clan graph}  $\Cl_\alpha(G)$ has vertex set $\cCl_\alpha(V(G))$
and edges
 $\{ (v,i),(w,j)\}$  whenever $\{v,w\} \in E(G)$ or 
 both $v=w$ and $i\neq j$.
As observed in \cite{CPS}, one has 
$
\bX_G = \sum_{\alpha : V(G) \to \PP} \tfrac{1}{\alpha!} X_{\Cl_\alpha(G)}
$
where $\alpha! := \prod_v \alpha(v)!$.
Many   properties of $X_G$ extend to $\bX_G$
via this identity, but some interesting features of $\bX_G$ cannot be explained by this formula alone.
 \end{remark}
 
Our main results in Section~\ref{main-sect}
 provide a natural construction for $\bX_G$  
 using the theory of \defn{combinatorial Hopf algebras}.
 This approach requires some care, as $\bX_G$ is not a symmetric function of bounded degree. We explain things precisely in terms of \defn{linearly compact Hopf algebras} after reviewing a similar, simpler construction of $X_G$ in Section~\ref{back-sect}, following \cite{ABS}.

As an application of our approach, we show that $\bX_G$ has a positive expansion into \defn{multifundamental quasisymmetric functions}. We also study two related $q$-analogues of $\bX_G$, which give $K$-theoretic generalizations of $X_G(q)$.
We classify exactly when one of these analogues is symmetric. 
For the other, we extend a theorem of Crew, Pechenik, and Spirkl
(also lifting a theorem of Shareshian and Wachs) to derive a positive expansion into symmetric Grothendieck functions 
when $G$ 
is the incomparability graph of a natural unit interval order.
These results are contained in Section~\ref{qana-sect1}.

\subsection*{Acknowledgments}

This work was partially supported by Hong Kong RGC grants 16306120 and 16304122. We thank Darij Grinberg, Joel Lewis, Brendan Pawlowski, Oliver Pechenik, and Travis Scrimshaw for many useful comments and discussions.

\section{Background}\label{back-sect}

Let $\KK$ be an integral domain; in most results, one can assume this is $\ZZ$.

\subsection{Hopf algebras}\label{hopf-sect1}

Write $\otimes = \otimes_\KK$ for the tensor product over $\KK$.
A \defn{$\KK$-algebra} is a $\KK$-module $A$ with $\KK$-linear
product $\nabla : A\otimes A \to A$
and unit $\iota : \KK\to A$ 
maps.
Dually, a \defn{$\KK$-coalgebra} is a $\KK$-module $A$ with $\KK$-linear 
coproduct $\Delta : A \to A\otimes A$
and 
counit $\epsilon : A \to \KK$ maps.
 The (co)product and (co)unit maps must satisfy several associativity axioms;
see \cite[\S1]{GrinbergReiner}.

A $\KK$-module $A$ that is both a $\KK$-algebra and a $\KK$-coalgebra is a \defn{$\KK$-bialgebra}
if the coproduct and counit maps are algebra morphisms. 
A bialgebra with a direct sum decomposition $A = \bigoplus_{n \in \NN}A_n$ is \defn{graded} if its (co)product and (co)unit are graded maps.
A bialgebra $A = \bigoplus_{n \in \NN} A_0$ is \defn{connected} if there is an isomorphism $A_0\cong \KK$ and the unit and counit 
are obtained by composing this isomorphism with the usual inclusion and projection maps
$ A_0 \hookrightarrow A$ and  $A \twoheadrightarrow A_0$.

Let $\End(A)$ denote the set of $\KK$-linear maps $A \to A$. This set is a $\KK$-algebra for the 
product 
$
f \ast g := \nabla \circ (f\otimes g) \circ \Delta
$. 
The unit of this \defn{convolution algebra}
is the composition $ \iota\circ \epsilon$ of the unit and counit of $A$.
A bialgebra $A$ is a \defn{Hopf algebra} if the identity map $\id : A \to A$ has a
two-sided inverse 
$\antipode : A \to A$ in $\End(A)$. When it exists, we call $\antipode$ the \defn{antipode} of $A$.

We mention a few common examples of Hopf algebras.

\begin{example}
Let $\cG_n$ be the free $\KK$-module spanned by all isomorphism classes of undirected graphs with $n$ vertices, and set $\cG = \bigoplus_{n \in \NN} \cG_n$.
One views $\cG$ as 
a connected, graded Hopf algebra 
 with product 
$\nabla(G \otimes H) = G \sqcup H$ and coproduct  
$ \Delta(G) = \sum_{S\sqcup T = V(G)} G|_S \otimes G|_T$
for   graphs $G$ and $H$,
 where $\sqcup$ means disjoint union and  $G|_S$ is the induced subgraph of $G$ on $S$. 
\end{example}

A \defn{lower set} in a directed acyclic graph $D=(V,E)$ is a set $S\subseteq V$ 
such that if a directed path connects $v \in V$ to $s \in S$ then $v \in S$.
An \defn{upper set} in $D$  is a set $S\subseteq V$ 
such that if a directed path connects $s \in S$ to $v \in V$ then $v \in S$.
Finally, an \defn{antichain} in $D$ is a set of vertices $S$ such that 
no directed path connects any $s \in S$ to any other $t\in S$.

\begin{example}
Let $\cO_n$ for $n \in \NN$ be the free $\KK$-module spanned by all isomorphism classes of directed acyclic graphs with $n$ vertices, and set 
$\cO = \bigoplus_{n \in \NN} \cO_n$. 
One views $\cO$ as a connected, graded Hopf algebra  with product  
$\textstyle \nabla(C \otimes D) = C \sqcup D$ and 
coproduct $ \Delta(D) = \sum D|_S \otimes D|_T$
for directed acyclic graphs  graphs $C$ and $D$,
 where the sum is over all disjoint unions $S\sqcup T = V(D)$ 
 with $S$ a lower set and $T$ an upper set.
\end{example}

A \defn{labeled poset} is a pair $(D,\gamma)$ consisting of a directed acyclic graph $D$ and  an injective map $\gamma : V(D) \to \ZZ$.
We consider $(D,\gamma)=(D',\gamma')$   if 
there is an isomorphism $D \xrightarrow{\sim} D'$, written $v\mapsto v'$, such that
$\gamma(u) - \gamma(v)$ and $\gamma'(u') - \gamma'(v')$ have the same sign for all edges $u\to v\in E(D)$. 

If $(D_1,\gamma_1)$ and $(D_2,\gamma_2)$ are labeled posets then let $\gamma_1\sqcup \gamma_2 : V(D_1\sqcup D_2)\to \ZZ$  be
any injective map such that $(\gamma_1\sqcup \gamma_2)(u) - (\gamma_1\sqcup \gamma_2)(v)$ has the same sign as $\gamma_i(u) - \gamma_i(v)$ for all $i \in \{1,2\}$ and $u,v \in V(D_i)$.

\begin{example}
Let $\cP_n$ be the free $\KK$-module spanned by all labeled poset with $n$ vertices, and set $\cP = \bigoplus_{n \in \NN} \cP_n$.
This is a connected, graded Hopf algebra with product 
$ \nabla((D_1,\gamma_1) \otimes (D_2,\gamma_2)) = (D_1\sqcup D_2, \gamma_1\sqcup \gamma_2)$
and coproduct  $
\Delta((D,\gamma)) = \sum  (D|_S,\gamma|_S) \otimes (D|_T,\gamma|_T)
$
where the sum is over all disjoint decompositions $S\sqcup T = V(D)$ 
 with $S$ a lower set and $T$ an upper set.
 \end{example}

A \defn{(strict) composition} $\alpha = (\alpha_1,\alpha_2,\dots,\alpha_l)$ is a finite sequence of positive integers,
called its \defn{parts}.
We say that $\alpha$ is a composition of $|\alpha| := \sum_i \alpha_i\in\NN$.

\begin{example}
Fix a composition $\alpha$ and let $x_1,x_2,\dots$ be a countable sequence of commuting variables.
The \defn{monomial quasisymmetric function} of $\alpha$  is the power series 
$M_{\alpha} = \sum_{1\leq i_1<i_2<\dots<i_l} x_{i_1}^{\alpha_1} x_{i_2}^{\alpha_2}\cdots  x_{i_l}^{\alpha_l}.$
Let $\QSym =  \KK\spanning\{ M_\alpha : \alpha\text{ any composition}\}$ 
be the ring of quasisymmetric functions of bounded degree.
This ring is a graded connected Hopf algebras 
for the coproduct   
$\Delta(M_\alpha) = \sum_{\alpha=\alpha'\alpha''} M_{\alpha'}\otimes M_{\alpha''}$
where $\alpha'\alpha''$ denotes concatenation of compositions,
and the counit that acts on power series by setting 
$x_1=x_2=\dots=0$.
\end{example}

A \defn{partition} is a composition sorted into decreasing order. 
We sometimes write $\lambda = 1^{m_1}2^{m_2}\cdots$ to denote the partition with exactly $m_i$ parts equal to $i$. We also let $\ell(\lambda) = m_1+m_2+\dots$ denote the number of parts of $\lambda$.

\begin{example}
The \defn{elementary symmetric function} of a partition $\lambda$
is the product $e_\lambda := e_{\lambda_1}e_{\lambda_2}\cdots$ where $e_n := M_{1^n}$.
These power series are a basis for the Hopf subalgebra $\Sym \subset \QSym$ of symmetric functions of bounded degree.
\end{example}

\subsection{Combinatorial Hopf algebras}

Following \cite{ABS}, a \defn{combinatorial Hopf algebra} $(H,\zeta)$ is a graded, connected Hopf algebra $H$ of finite graded dimension with an algebra
homomorphism $\zeta : H \to \KK$.

\begin{example}
The pair $(\QSym,\zetaq)$ is an example of a combinatorial Hopf algebra,
where 
 $\zetaq : \QSym\to \KK$ is the map
$\zetaq(f) = f(1,0,0,\dots)$, which sends $M_{(n)} \mapsto 1$ and  $M_\alpha \mapsto 0$ for all compositions $\alpha$ with at least two parts. 
\end{example}

For a graph $G$ define 
 $\zeta_\cG(G) = 0^{|E(G)|}$
 where $0^0 := 1$.
 For a directed acyclic graph $D$ likewise set 
 $\zeta_\cO(D) = 0^{|E(D)|}$ for each directed acyclic graph $D$.
 These formulas extend to linear maps $\cG \to \KK$ and $\cO \to \KK$.
Finally let  $\zeta_\cP : \cP \to \KK$ be the linear map with $\zeta_\cP((D,\gamma))=1$
if $\gamma(u)<\gamma(v)$ for all edges $u\to v \in E(D) $
and
with $\zeta_\cP((D,\gamma))=0$ otherwise.
 
\begin{example}
The pairs $(\cG,\zeta_{\cG})$,
$(\cO,\zeta_{\cO})$, and
$(\cP,\zeta_{\cP})$ are all combinatorial Hopf algebras.
\end{example}

A morphism of combinatorial Hopf algebras
 $\Psi :(H,\zeta) \to (H',\zeta')$ is a graded Hopf algebra morphism $\Psi : H \to H'$ with $\zeta = \zeta' \circ \Psi$.
Results in \cite{ABS} show that there exists a unique morphism (with an explicit formula) from any combinatorial Hopf algebra to $(\QSym,\zetaq)$.
Moreover, the image of this morphism is contained in the Hopf subalgebra   $\Sym\subset \QSym$
if $H$ is cocommutative.
Of greatest relevance here \cite[Ex.~4.5]{ABS}, the unique morphism 
$(\cG,\zeta_\cG)  \to (\QSym,\zetaq)$
assigns  each graph $G$ to its chromatic symmetric function $X_G$.


We might as well also describe the unique morphisms from $(\cO,\zeta_{\cO})$ and
$(\cP,\zeta_{\cP})$ to $ (\QSym,\zetaq)$. This requires a little more notation.
 A \defn{$P$-partition} \cite{St1} of a labeled poset $P=(D,\gamma)$
 is a map $\kappa : V(D) \to \PP$ with 
\ben
\item[(a)] $\kappa(u) \leq \kappa(v)$ whenever $u\to v \in E(D)$ and $\gamma(u) < \gamma(v)$,
and  
\item[(b)] $\kappa(u) < \kappa(v)$ whenever $u\to v \in E(D)$ and $\gamma(u) > \gamma(v)$.
\een
Define
$\Gamma(D,\gamma)= \sum_{\kappa} x^\kappa$ 
where the sum is over all $P$-partitions for $P=(D,\gamma)$.

Given directed acyclic graph $D$,  let  $\gamma^\op$ denote an arbitrary \defn{decreasing labeling} of $D$, meaning
a injective map $\gamma^\op : V(D) \to \ZZ$ with
\be\quad\text{$\gamma^\op(x) > \gamma^\op(y)$ whenever $x \to y \in E(D)$.}\ee
Then define $\Gamma(D) :=\Gamma(D,\gamma^\op) \in \NN\llbracket x_1,x_2,\dots\rrbracket $.


 Finally, for any graph $G$, let $\AO(G)$ be its set of \defn{acyclic orientations}, that is, the
 directed acyclic graphs that recover $G$ when we ignore edge orientations.
The following is well-known \cite{ABS} and can be seen as a special case of Theorem~\ref{diagram2-thm}.
 
\begin{proposition}\label{diagram1-prop}
There is a commutative diagram of morphisms
{\[
\begin{tikzcd}
(\cG,\zeta_\cG) \arrow[r,hook] \arrow[dr] & (\cO,\zeta_\cO) \arrow[r,hook] \arrow[d]  & (\cP,\zeta_\cP) \arrow[dl] \\
  & (\QSym,\zetaq)  &  
  \end{tikzcd}
\]}%
in which the horizontal maps send $G\mapsto \sum_{D \in \AO(G)} D$
and $D \mapsto (D,\gamma^\op)$,
and the $\QSym$-valued maps send $G \mapsto X_G$
and $D \mapsto \Gamma(D)$
and $(D,\gamma) \mapsto \Gamma(D,\gamma)$.
\end{proposition}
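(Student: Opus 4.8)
The plan is to lean on the universal property of $(\QSym,\zetaq)$ from \cite{ABS}: because there is a \emph{unique} morphism of combinatorial Hopf algebras from any $(H,\zeta)$ into $(\QSym,\zetaq)$, the whole diagram is determined once I check that the two horizontal maps are morphisms and that the downward map from $(\cP,\zeta_\cP)$ is the canonical one. Write $\iota^\cG : \cG \to \cO$ for $G \mapsto \sum_{D\in\AO(G)} D$ and $\iota^\cO : \cO \to \cP$ for $D \mapsto (D,\gamma^\op)$; both are visibly injective, since the support of $\iota^\cG(G)$ is $\AO(G)$ and hence recovers $G$, while $(D,\gamma^\op)$ retains $D$ as its underlying directed acyclic graph.

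First I would verify that $\iota^\cG$ and $\iota^\cO$ are morphisms of combinatorial Hopf algebras. Compatibility with products is routine: an acyclic orientation of $G\sqcup H$ is the same datum as a pair of acyclic orientations of $G$ and $H$, and restricting a decreasing labeling of $D_1\sqcup D_2$ to each factor gives decreasing labelings matching $\gamma_1\sqcup\gamma_2$ up to the equivalence defining $\cP$. The coproduct compatibility of $\iota^\cG$ is the combinatorial heart: if $S$ is a lower set of an orientation $D\in\AO(G)$ and $T=V(G)\setminus S$ the complementary upper set, then every edge of $D$ between $S$ and $T$ is forced to point from $S$ into $T$, which produces a bijection between pairs $(D,S)$ and triples $(V(G)=S\sqcup T,\, D_1\in\AO(G|_S),\, D_2\in\AO(G|_T))$; this bijection is exactly the identity $\Delta_\cO(\iota^\cG G)=(\iota^\cG\otimes\iota^\cG)\Delta_\cG(G)$. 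For $\iota^\cO$ the coproduct identity is immediate because the restriction of a decreasing labeling to any lower or upper set is again decreasing. The character conditions reduce to the facts that $|E(D)|=|E(G)|$ for every orientation and that an edgeless graph has exactly one acyclic orientation, giving $\zeta_\cO\circ\iota^\cG=\zeta_\cG$; and that a decreasing labeling satisfies $\gamma^\op(u)<\gamma^\op(v)$ along an edge only vacuously, giving $\zeta_\cP\circ\iota^\cO=\zeta_\cO$.

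Next I would identify the canonical morphism $(\cP,\zeta_\cP)\to(\QSym,\zetaq)$ as $(D,\gamma)\mapsto\Gamma(D,\gamma)$. That $\Gamma$ is a graded Hopf algebra morphism is the classical theory of $P$-partitions \cite{St1}, so it remains to check the character. Setting $x_1=1$ and all other variables to $0$ in $\Gamma(D,\gamma)$ counts the constant coloring $\kappa\equiv 1$, and by conditions (a)--(b) this is a $P$-partition exactly when no edge $u\to v$ has $\gamma(u)>\gamma(v)$, i.e. when $\gamma(u)<\gamma(v)$ along every edge. This is precisely the condition defining $\zeta_\cP((D,\gamma))=1$, so $\zetaq\circ\Gamma=\zeta_\cP$ and $\Gamma$ is the canonical morphism by uniqueness.

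Finally, commutativity and the two remaining downward maps follow for free. Since $\iota^\cO$ and $\Gamma$ are morphisms, so is $\Gamma\circ\iota^\cO$, which therefore equals the unique morphism $(\cO,\zeta_\cO)\to(\QSym,\zetaq)$; evaluating gives $D\mapsto\Gamma(D,\gamma^\op)=\Gamma(D)$ and simultaneously closes the right triangle. Likewise $\Gamma\circ\iota^\cO\circ\iota^\cG$ is a morphism $(\cG,\zeta_\cG)\to(\QSym,\zetaq)$, hence equals $G\mapsto X_G$ by \cite[Ex.~4.5]{ABS}; this forces the identity $X_G=\sum_{D\in\AO(G)}\Gamma(D)$ and closes the left triangle. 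I expect the main obstacle to be the coproduct compatibility of $\iota^\cG$ --- verifying carefully that lower-set/upper-set decompositions of acyclic orientations of $G$ biject with vertex bipartitions equipped with independently chosen orientations of the two induced subgraphs --- since the product and character checks are routine and the $\cP\to\QSym$ morphism is classical.
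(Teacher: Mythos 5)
Your proposal is correct and follows essentially the route the paper takes: the paper states Proposition~\ref{diagram1-prop} without proof, citing \cite{ABS} and noting it is the $q=\beta=0$ shadow of Theorem~\ref{diagram2-thm}, whose proof has exactly your structure (verify the horizontal maps are morphisms, with the coproduct compatibility of $G\mapsto\sum_{D}D$ as the key bijection on lower-set/upper-set decompositions, then invoke the uniqueness of the morphism to the terminal object). Your identification of the coproduct check for $\iota^{\cG}$ as the main point, and your observation that cross-edges between a lower set and its complementary upper set are forced to point outward, match the argument the paper carries out in the harder multi-orientation setting of Lemma~\ref{merge-lem}.
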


This proposition recovers the well-known identity $X_G = \sum_{D \in \AO(G)}\Gamma(D)$. 


\section{$K$-theoretic generalizations}\label{main-sect}

We now explain how the results in the previous  can be extended ``$K$-theoretically'' to construct interesting quasisymmetric functions of unbounded degree, including $\bX_G$.
 This requires a brief discussion of \defn{linearly compact modules}.

\subsection{Linearly compact modules}\label{completions-sect}

Let $A$ and $B$ be $\KK$-modules
with a $\KK$-bilinear form $\langle\cdot,\cdot\rangle : A \times B \to \KK$.
Assume  $A$ is free and $\langle\cdot,\cdot\rangle$
is \defn{nondegenerate} in the sense that
$b\mapsto \langle \cdot,b\rangle$ is a bijection $B\to \Hom_\KK(A,\KK)$.

Fix a basis $\{a_i\}_{i \in I}$ for $A$.
For each $i \in I$, there exists a unique $b_i \in B$ with
$\langle a_i, b_j \rangle = \delta_{ij}$ for all $i,j \in I$,
and we identify $b \in B$
with the formal linear combination $\sum_{i \in I} \langle a_i ,b\rangle b_i$.
We call $\{b_i\}_{i \in I}$ a \defn{pseudobasis} for $B$.

We give $\KK$ the discrete topology.
Then the \defn{linearly compact topology} \cite[\S I.2]{Dieudonne} on $B$ is the coarsest topology 
in which the maps $\langle a_i, \cdot \rangle : B \to \KK$ are all continuous.
This topology depends on $\langle\cdot,\cdot\rangle$ but not on the choice of basis for $A$.
For a basis of open sets in the linearly compact topology, see \cite[Eq.\ (3.1)]{Marberg2018}.

\begin{definition}\label{lc-def}
A \defn{linearly compact} (or \defn{LC} for short) $\KK$-module is a $\KK$-module  $B$ with a nondegenerate
bilinear form $A \times B \to \KK$ for some free $\KK$-module $A$, given the linearly compact topology;
in this case we say that $B$ is the \defn{dual} of $A$.
Morphisms between such modules are continuous $\KK$-linear maps.
\end{definition}

Let $B$ and $B'$ be linearly compact $\KK$-modules dual to the free $\KK$-modules $A$ and $A'$.
Let $\langle\cdot,\cdot\rangle$ denote both of the 
associated forms. 
Every linear map $\phi : A' \to A$ has a unique adjoint $\psi : B\to B'$ such that 
$\langle \phi(a), b\rangle = \langle a,\psi(b)\rangle$.  
A linear map $B \to B'$ is continuous when it is the adjoint 
of a linear map $A' \to A$.

\begin{definition}
Define
$B \htimes B' := \Hom_\KK(A\otimes A',\KK)$ and give this the LC-topology from the pairing 
$(A\otimes A') \times \Hom_\KK(A\otimes A',\KK) \to \KK$.
\end{definition}

If $\{b_i\}_{i \in I}$ and $\{b'_j \}_{j \in J}$ are pseudobases for $B$ and $B'$,
then we can realize the \defn{completed tensor product} $B\htimes B'$ concretely as the linearly compact $\KK$-module
with the set of tensors $\{ b_i \otimes b_j'\}_{(i,j) \in I \times J}$ as a pseudobasis.

 \begin{example}\label{lc-ex0}
Let $A = \KK[x]$ and $B=\KK\llbracket x \rrbracket$.
Define $\langle\cdot,\cdot\rangle : A \times B \to \KK$ to be the nondegenerate $\KK$-bilinear form 
$\left\langle \sum_{n\geq 0} a_n x^n, \sum_{n \geq 0} b_n x^n\right\rangle := \sum_{n\geq 0} a_n b_n.$
Then the set $\{x^n\}_{n\geq 0}$ is a basis for $A$ and a pseudobasis for $B$,
and we have \[\KK\llbracket x \rrbracket \otimes \KK\llbracket y \rrbracket \neq \KK\llbracket x \rrbracket\htimes \KK\llbracket y \rrbracket \cong \KK\llbracket x,y \rrbracket.\]
\end{example}

Suppose $\nabla : B \htimes B \to B$ and $\iota : B \to \KK$ are continuous linear maps
which are the adjoints of linear maps $\epsilon : \KK \to A$ and $\Delta : A \to A \otimes A$.
We say that $(B,\nabla,\iota)$ is an \defn{LC-algebra}
if $(A,\Delta,\epsilon)$ is a $\KK$-coalgebra.
Similarly, we say that 
$\Delta : B \to B\htimes B$ and $\epsilon : B \to \KK$ 
make $B$ into an \defn{LC-coalgebra}
if $\Delta$ and $\epsilon$ are the adjoints of the product and unit 
maps of a $\KK$-algebra on $A$.
We define
\defn{LC-bialgebras} and \defn{LC-Hopf algebras} analogously; see \cite{Marberg2018}. 
If $B$ is an LC-Hopf algebra then its \defn{antipode} is the adjoint of the antipode of 
  $A$.

\subsection{Combinatorial LC-Hopf algebras}

Following \cite{Marberg2018}, define a \defn{combinatorial LC-Hopf algebra} to be a pair $(H,\zeta)$ where
\bei
\item[(a)] $H$ is an LC-Hopf algebra;
\item[(b)] $\zeta : H \to \KK\llbracket t\rrbracket$ is a homomorphism of linearly compact algebras;
\item[(c)] the counit of $H$ coincides with $\zeta(\cdot)|_{t\mapsto 0} $.
\eei
A morphism of combinatorial LC-Hopf algebras $\Psi :(H,\zeta) \to (H',\zeta')$ is a LC-Hopf algebra morphism $\Psi : H \to H'$ with $\zeta = \zeta' \circ \Psi$.

\begin{example}
Let $\mQSym$ be the set of all quasisymmetric power series in $\KK\llbracket x_1,x_2,\dots,\rrbracket$
of unbounded degree. 
 The (co)product, (co)unit, and antipode  $\QSym$ all extend to continuous $\KK$-linear maps 
that make $\mQSym$
into an LC-Hopf algebra, with $\{M_\alpha\}$ as a pseudobasis.
Then $(\mQSym,\wzetaq)$ is a combinatorial LC-Hopf algebra when $\wzetaq$ is the map $\wzetaq: f \mapsto f(t,0,0,\dots)$,
which sends
\[ \wzetaq : M_\alpha \mapsto \begin{cases} t^{|\alpha|} &\text{if $\alpha \in \{\emptyset, (1), (2), (3),\dots\}$} \\  0&\text{otherwise}.\end{cases} \]
\end{example}


The preceding example is an instance of a general construction.
 If $A$ is a free $\KK$-module with basis $S$, then its \defn{completion}
$\overline A$ is the 
set of arbitrary $\KK$-linear combinations of $S$.
We view $\overline A$ as a  linearly compact $\KK$-module with $S$ as a pseudobasis,
relative 
to the nondegenerate bilinear form $A \times \overline A \to \KK$
 making $S$ orthonormal.
 
Suppose $H=\bigoplus_{n \in \NN}H_n$ is a connected Hopf algebra that is graded as an algebra with finite graded dimension,
and filtered as a coalgebra in the sense that if $h \in H_n$ then $\Delta(h) \in \bigoplus_{i+j \geq n} H_i \otimes H_j$.
If $\zeta : H \to \KK$ is any linear map,
then we write
 $\overline \zeta : \overline H \to \KK\llbracket t\rrbracket$ 
for the unique continuous linear map satisfying
\be\label{ozeta-eq} \overline \zeta (h) := \zeta(h) t^n\quad\text{for $n \in \NN$ and $h \in H_n$.}\ee
 There is a unique way of extending  the (co)unit and (co)product of $H$
to continuous linear maps on its completion
 $\overline H$, for which
the following holds.

\begin{lemma}\label{lc-ext-lem}
The   structures just given make $\overline H$ into an LC-Hopf algebra.
If $(H,\zeta)$ is a combinatorial Hopf algebra (so the coproduct is graded rather than just filtered),
then  $(\overline H,\overline\zeta)$ is a combinatorial LC-Hopf algebra,
and the unique morphism  $(H,\zeta)\to (\QSym,\zetaq)$ extends to a morphism 
$(\overline H,\overline\zeta)\to (\mQSym,\wzetaq)$.
\end{lemma}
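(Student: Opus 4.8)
The plan is to construct every operation on $\overline H$ as the continuous extension of the corresponding operation on $H$, to verify these extensions are well defined using the two finiteness hypotheses, and then to transport the Hopf and combinatorial axioms from the dense subspace $H\subseteq\overline H$ by continuity. To fit the framework of Section~\ref{completions-sect}, I would equip the free module $H$ with the (co)algebra structures \emph{dual} to its given algebra and coalgebra structures under the orthonormal pairing; the adjoints prescribed by the definitions of LC-algebra and LC-coalgebra are then exactly the continuous extensions of $\nabla$ and $\Delta$, and these agree with $\nabla$ and $\Delta$ on $H$. The role of the finiteness hypotheses is to make these dual structures land in $H$ itself: because $H$ is graded as an algebra with finite graded dimension, a basis element $r\in H_n$ has nonzero coefficient in a product $st$ of basis elements only when $\deg s+\deg t=n$, hence only finitely often, so the product extends to a continuous map $\overline H\htimes\overline H\to\overline H$; dually, the filtered-coalgebra condition $\Delta(H_n)\subseteq\bigoplus_{i+j\geq n}H_i\otimes H_j$ ensures that a fixed tensor $u\otimes v$ with $u\in H_i$ and $v\in H_j$ occurs in $\Delta(s)$ only when $\deg s\leq i+j$, so that $\Delta$ extends to a continuous map $\overline H\to\overline H\htimes\overline H$.

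Having defined the five continuous operations, I would check the LC-Hopf axioms by density: associativity, coassociativity, the (co)unit identities, and the bialgebra compatibility all hold on $H$, and since every map involved is continuous and the finite truncations of a pseudobasis expansion converge to it (so $H$ is dense in $\overline H$), each identity between continuous maps that holds on the dense subspace holds throughout. For the antipode I would extend $\antipode$ from $H$ by continuity and note that, by continuity of the convolution product $\ast=\nabla\circ(\cdot\htimes\cdot)\circ\Delta$, the extension remains a two-sided convolution inverse of $\id$; alternatively one builds it from the standard recursive (Takeuchi) formula, whose defining series converges in the linearly compact topology because $\overline H$ is connected and filtered. This establishes that $\overline H$ is an LC-Hopf algebra.

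For the combinatorial statement, assume in addition that the coproduct is graded. Here I would check that $\overline\zeta$, sending a pseudobasis expansion $\sum_s c_s s$ to $\sum_s c_s\zeta(s)\,t^{\deg s}$, is well defined and continuous into $\KK\llbracket t\rrbracket$ (the coefficient of $t^n$ is a finite sum over $s\in H_n$), and that it is a homomorphism of linearly compact algebras: multiplicativity reduces on the dense subspace to $\zeta(hh')\,t^{n+m}=\zeta(h)\,t^n\cdot\zeta(h')\,t^m$ for $h\in H_n$ and $h'\in H_m$, which follows from $\zeta$ being an algebra map together with the grading of the product. Condition (c) then holds because setting $t\mapsto 0$ extracts the degree-$0$ part, which by connectedness is precisely the counit of $\overline H$; hence $(\overline H,\overline\zeta)$ is a combinatorial LC-Hopf algebra.

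Finally, let $\Phi\colon(H,\zeta)\to(\QSym,\zetaq)$ be the unique morphism supplied by \cite{ABS}. Since $\Phi$ is graded it extends by continuity to $\overline\Phi\colon\overline H\to\mQSym$ (the relevant coefficient sums are again finite by finite graded dimension), and $\overline\Phi$ respects the product, coproduct, (co)unit, and antipode because $\Phi$ does so on the dense subspace $H$. To see $\overline\zeta=\wzetaq\circ\overline\Phi$ I would evaluate on a homogeneous $h\in H_n$: then $\Phi(h)\in\QSym_n$ is homogeneous, so $\wzetaq(\Phi(h))=\zetaq(\Phi(h))\,t^n=\zeta(h)\,t^n=\overline\zeta(h)$, and the identity extends to all of $\overline H$ by continuity. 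I expect the main obstacle to be the first paragraph: confirming that the coproduct extension genuinely lands in the completed tensor product $\overline H\htimes\overline H$ is exactly where the filtered-coalgebra hypothesis is needed, and checking that the antipode extends requires care about convergence in the linearly compact topology. Once this topological bookkeeping is in place, the axiom-chasing and the combinatorial and morphism parts follow routinely by density and continuity.
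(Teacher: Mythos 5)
Your proposal is correct and in substance follows the same route as the paper: the paper packages the construction by observing that the graded dual $H^\ast=\bigoplus_n\Hom_\KK(H_n,\KK)$ is a genuine Hopf algebra (graded as a coalgebra with finite graded dimension, filtered as an algebra) and that $\overline H$ is its LC-dual in the sense of Definition~\ref{lc-def}, which is exactly your "equip the free module with the dual structures and take adjoints," and your finiteness checks for the extended product and coproduct are precisely the two dualities the paper uses. The verification of condition (c) via connectedness and the computation $\wzetaq(\Psi(h))=\sum_n\zeta(h_n)t^n$ on homogeneous components also match the paper's proof essentially verbatim. The one point where your presentation is weaker than the paper's concerns the antipode: your primary suggestion is to extend $\antipode_H$ by continuity, which presupposes that $H$ itself has an antipode. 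Although the lemma's hypothesis nominally says "Hopf algebra," in every application in the paper ($\mG$, $\mO$, $\mP$, and the weighted-graph algebra) the underlying bialgebra has \emph{no} antipode --- e.g.\ for a single vertex $v$ the convolution-inverse equation forces $\antipode(v)=-v+v^2-v^3+\cdots$, an infinite sum lying only in the completion --- so there is nothing to extend. The paper avoids this entirely because $H^\ast$, being connected and graded as a coalgebra, always has an antipode, and the LC-antipode is defined as its adjoint. Your fallback via the Takeuchi series does repair this (the series is locally finite in the target because the product is graded with finite graded dimension), so the argument goes through, but that fallback should be the main route rather than an aside.
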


\begin{proof}
In this setup, the graded dual $H^\ast = \bigoplus_{n \in \NN} \Hom_\KK(H_n,\KK)$
is a well-defined Hopf algebra, which is graded as a coalgebra with finite graded dimension,
and filtered as an algebra in the sense that the product sends 
\[\textstyle\Hom_\KK(H_i,\KK)\otimes \Hom_\KK(H_j,\KK)\to \bigoplus_{n\leq i+j} \Hom_\KK(H_n,\KK).\]
Our definition of $\overline H$  is an LC-Hopf algebra because it is the (ungraded) dual of $H^\ast$ in the sense of 
Definition~\ref{lc-def}, via the natural bilinear form $H^\ast \times \overline H \to \KK$.

Now suppose $(H,\zeta)$ is a combinatorial Hopf algebra.
Since $\zeta$ is an algebra homomorphism, the map $\overline \zeta$ is also an algebra homomorphism.
As $H$ is connected, we may assume that $H_0 = \KK$ and that the counit map $H \to H_0=\KK$ is the natural projection.
Then the algebra homomorphism $\zeta : H \to \KK$  must restrict to the identity map on $H_0=\KK$,
so the counit $\varepsilon$ of $\overline H$ evaluated at a generic element
$h = \sum_{n \in \NN} h_n \in \overline H$ with $h_n \in H_n$
has the formula 
$\varepsilon(h) = h_0 = \zeta(h_0) = \overline\zeta(h)|_{t=0}.$
We conclude that $(\overline H,\overline \zeta)$ is a combinatorial LC-Hopf algebra.

The morphism $(H,\zeta) \to(\QSym,\zetaq)$ is a graded linear map, so
it extends to a morphism of LC-Hopf algebras $\Psi:\overline H \to\mQSym$. 
This map satisfies 
\[\textstyle\wzetaq ( \Psi(h)) = \sum_{n \in \NN} \wzetaq ( \Psi(h_n))
 = \sum_{n \in \NN} \zetaq ( \Psi(h_n) )t^n
  = \sum_{n \in \NN} \zeta(h_n) t^n =   \overline\zeta(h) \]
  for any $h = \sum_{n \in \NN} h_n \in \overline H$ with $h_n \in H_n$,
  so we have $\wzetaq\circ \Psi = \overline\zeta$.
\end{proof}

The pair $(\mQSym,\wzetaq)$ is a final object in the category of combinatorial LC-Hopf algebras,
meaning there is a unique morphism $(H,\zeta) \to(\mQSym,\wzetaq)$ for any combinatorial LC-Hopf algebra.
Specifically,
if $H$ has
 coproduct $\Delta$,
then define $\Delta^{(0)} = \id_H$ and  
\[
\Delta^{(k)} 
= (\Delta^{(k-1)}\htimes \id) \circ \Delta 
= (\id \htimes \Delta^{(k-1)}) \circ \Delta
 : H \to H^{\htimes(k+1)} 
 \quad\text{for }k\in \PP.
 \]
 For each nonempty composition $\alpha = (\alpha_1,\alpha_2,\dots,\alpha_k)$,
let $\zeta_\alpha : H \to \KK$ be the map sending $h \in H$ to the coefficient of $t^{\alpha_1}\otimes t^{\alpha_2}\otimes \cdots \otimes t^{\alpha_k}$
in $\zeta^{\otimes k} \circ \Delta^{(k-1)}(h) \in \KK\llbracket t \rrbracket$.
When $\alpha = \emptyset$ is empty let $\zeta_\emptyset := \zeta(\cdot)|_{t\mapsto 0}$ be the counit of $H$.

\begin{theorem}[{\cite[Thm.~2.8]
{LM2019}}]
\label{lc-thm}
If $(H,\zeta)$ is a combinatorial LC-Hopf algebra then  
$\Psi_{H,\zeta} : h \mapsto \sum_\alpha \zeta_\alpha(h) M_\alpha$
is the unique morphism  $(H,\zeta)\to (\mQSym,\wzetaq)$.
\end{theorem}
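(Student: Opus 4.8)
The plan is to establish the two halves of the universal property separately: first \emph{uniqueness}, that any morphism $\Psi : (H,\zeta)\to(\mQSym,\wzetaq)$ must be given by the stated formula, and then \emph{existence}, that $\Psi_{H,\zeta}$ really is a morphism. Both arguments are linearly compact refinements of the Aguiar--Bergeron--Sottile proof that $(\QSym,\zetaq)$ is terminal among ordinary combinatorial Hopf algebras.

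For uniqueness I would first record a coordinate identity in $\mQSym$: for any composition $\alpha=(\alpha_1,\dots,\alpha_k)$ with all parts positive and any $f=\sum_\beta c_\beta M_\beta$, the coefficient of $t^{\alpha_1}\otimes\cdots\otimes t^{\alpha_k}$ in $\wzetaq^{\otimes k}\circ\Delta^{(k-1)}_{\mQSym}(f)$ equals $c_\alpha$. This follows by applying $\wzetaq^{\otimes k}$ to the deconcatenation coproduct $\Delta^{(k-1)}_{\mQSym}(M_\beta)=\sum_{\beta=\beta^{(1)}\cdots\beta^{(k)}}M_{\beta^{(1)}}\otimes\cdots\otimes M_{\beta^{(k)}}$: the factor $\wzetaq(M_{\beta^{(i)}})$ vanishes unless $\beta^{(i)}$ is empty or a single part, and extracting the $t^{\alpha_1}\otimes\cdots\otimes t^{\alpha_k}$ coefficient (with every $\alpha_i\geq1$) forces $\beta^{(i)}=(\alpha_i)$, hence $\beta=\alpha$. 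Now given any morphism $\Psi$, continuity together with the coalgebra axiom gives $\Delta^{(k-1)}_{\mQSym}\circ\Psi=\Psi^{\htimes k}\circ\Delta^{(k-1)}_H$, while the character condition gives $\wzetaq\circ\Psi=\zeta$. Applying $\wzetaq^{\otimes k}$ to both sides and extracting the $t^\alpha$ coefficient, the left side becomes the $M_\alpha$ coefficient of $\Psi(h)$ by the coordinate identity, and the right side becomes the $t^\alpha$ coefficient of $\zeta^{\otimes k}\circ\Delta^{(k-1)}_H(h)$, which is exactly $\zeta_\alpha(h)$. Thus $\Psi=\Psi_{H,\zeta}$.

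For existence I would verify in turn that $\Psi_{H,\zeta}$ is continuous, is compatible with the characters, and is a morphism of LC-coalgebras. Continuity and well-definedness are immediate, since each $\zeta_\alpha$ is a composite of the continuous maps $\Delta^{(k-1)}_H$ and $\zeta^{\otimes k}$ with a coefficient extraction, and an arbitrary family of coefficients indexed by compositions always determines a legitimate element of $\mQSym$. Compatibility $\wzetaq\circ\Psi_{H,\zeta}=\zeta$ reduces to the observations that $\zeta_\emptyset=\epsilon_H=\zeta(\cdot)|_{t=0}$ and that $\zeta_{(n)}(h)$ is the coefficient of $t^n$ in $\zeta(h)$, so $\sum_{n\geq0}\zeta_{(n)}(h)\,t^n=\zeta(h)$. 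The coalgebra property follows from coassociativity: writing $\zeta_\gamma=\zeta^{[\gamma_1]}*\cdots*\zeta^{[\gamma_k]}$ as an iterated convolution of the coefficient functionals $\zeta^{[n]}\colon h\mapsto[t^n]\zeta(h)$ in $\Hom_\KK(H,\KK)$, one matches this against the deconcatenation coproduct $\Delta_{\mQSym}(M_\gamma)=\sum_{\gamma=\alpha\beta}M_\alpha\otimes M_\beta$, and the counit is handled by $\zeta_\emptyset=\epsilon_H$.

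The main obstacle is multiplicativity, $\Psi_{H,\zeta}(gh)=\Psi_{H,\zeta}(g)\,\Psi_{H,\zeta}(h)$. Here I would use that both $\Delta_H$ and $\zeta$ are algebra homomorphisms, so that $\Delta^{(k-1)}_H(gh)=\Delta^{(k-1)}_H(g)\cdot\Delta^{(k-1)}_H(h)$ and $\zeta^{\otimes k}$ carries the componentwise product in $H^{\htimes k}$ to the componentwise product in $\KK\llbracket t_1,\dots,t_k\rrbracket$; extracting coefficients then expresses $\zeta_\gamma(gh)$ as a sum over pairs $(\alpha,\beta)$ with prescribed overlaps. The delicate point is to check that this sum matches, term by term, the structure constants of the quasi-shuffle (overlapping shuffle) product $M_\alpha M_\beta=\sum_\gamma M_\gamma$ in $\QSym$. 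Once that combinatorial identity is in place, multiplicativity together with $\Psi_{H,\zeta}(1_H)=M_\emptyset$ completes the verification. Since Lemma~\ref{lc-ext-lem} only produces morphisms from completions of genuinely graded characters, this direct argument is needed to cover combinatorial LC-Hopf algebras whose character $\zeta$ is not graded.
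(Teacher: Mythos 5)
The paper does not actually prove Theorem~\ref{lc-thm}: it is imported verbatim from \cite[Thm.~2.8]{LM2019}, so there is no internal proof to compare against. Your sketch is, however, the standard argument (the Aguiar--Bergeron--Sottile terminal-object proof transported to the linearly compact setting, which is essentially what \cite{LM2019} does), and I find no error in it. The uniqueness half is complete: the coordinate identity extracting $c_\alpha$ from $\wzetaq^{\otimes k}\circ\Delta^{(k-1)}(f)$ is correct because $\wzetaq(M_\beta)$ vanishes unless $\beta$ has at most one part, and the infinite sums cause no trouble since only the single term $\beta=\alpha$ survives. The one place where you defer rather than prove is the crux of multiplicativity: you should make explicit that $\zeta_\gamma(gh)=\sum_{a+b=\gamma}\zeta_a(g)\,\zeta_b(h)$ where $a,b$ range over \emph{weak} compositions of length $\ell(\gamma)$, and that a zero entry in a slot collapses that tensor factor via the counit axiom (using condition (c) of the definition, $\epsilon=\zeta(\cdot)|_{t\mapsto 0}$), so that $\zeta_a=\zeta_{\mathrm{strip}(a)}$; counting the pairs $(a,b)$ with prescribed strips $(\alpha,\beta)$ then gives exactly the overlapping-shuffle structure constants of $M_\alpha M_\beta$ in $\QSym$. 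With that spelled out, and with the observation that each $M_\gamma$-coefficient of $\Psi_{H,\zeta}(g)\Psi_{H,\zeta}(h)$ is a finite sum (so the completed products converge in $\mQSym$), your argument is a complete and correct proof. Your closing remark is also apt: Lemma~\ref{lc-ext-lem} only covers completions of graded combinatorial Hopf algebras, so the direct argument is genuinely needed for the general statement.
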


\begin{remark}
Let $\mSym$ be the LC-Hopf subalgebra of symmetric functions in $\mQSym$.
When $H$ cocommutative, it is apparent from the formula given above that the morphism $\Psi_{H,\zeta}$ has its image in $\mSym$.
\end{remark}

 \subsection{Graphs and weighted graphs}
 
For each graph $G$ define 
\be\label{bDel-eq}\textstyle \bDelta(G) := \sum_{S\cup T = V(G)}   G|_S \otimes G|_T.\ee
This differs
from our other coproduct in allowing vertex decompositions that are not disjoint.
Use the continuous linear extension of \eqref{bDel-eq}
to replace the natural coproduct in $\wG$
and denote the resulting modified structure as $\mG$.
Also let $\wzeta_\cG : \mG \to \KK\llbracket t\rrbracket$
be the continuous linear map 
with 
\[\wzeta_\cG(G) := 0^{|E(G)|} t^{|G|}\]
so that $\wzeta_\cG$ extends
 $\zeta_\cG$
via the formula \eqref{ozeta-eq}.

The kromatic symmetric function $\bX_G$ emerges from these constructions
via the theory in the previous section, in the following sense:

\begin{proposition}\label{cG-lc-prop}
The pair $(\mG,\wzeta_\cG)$ is a combinatorial LC-Hopf algebra,
and the unique morphism 
$(\mG,\wzeta_\cG)  \to (\mQSym,\wzetaq)$
assigns  each graph $G$ to its kromatic symmetric function $\bX_G$.
\end{proposition}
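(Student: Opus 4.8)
The plan is to separate the claim into two parts: first to confirm that $(\mG,\wzeta_\cG)$ genuinely satisfies the axioms of a combinatorial LC-Hopf algebra, and then to compute its canonical morphism to $(\mQSym,\wzetaq)$ via the explicit formula of Theorem~\ref{lc-thm} and to recognize the output as $\bX_G$ in the monomial pseudobasis $\{M_\alpha\}$.

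For the first part I would show that $H=(\cG,\nabla,\bDelta)$ is a connected bialgebra, graded as an algebra of finite graded dimension and merely filtered as a coalgebra, so that the first assertion of Lemma~\ref{lc-ext-lem} equips its completion $\mG$ with an LC-Hopf structure. The bialgebra axioms come down to three short verifications. Coassociativity of $\bDelta$ holds because both $(\bDelta\htimes\id)\circ\bDelta$ and $(\id\htimes\bDelta)\circ\bDelta$ send $G$ to the symmetric triple sum $\sum_{A\cup B\cup C=V(G)}G|_A\otimes G|_B\otimes G|_C$, after reindexing the inner covering through $S=A\cup B$. Multiplicativity $\bDelta(G\sqcup H)=\bDelta(G)\,\bDelta(H)$ follows by decomposing each covering $S\cup T=V(G\sqcup H)$ into its intersections with $V(G)$ and $V(H)$. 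The counit is the projection onto the empty graph, and one sees the coproduct is filtered rather than graded since $|S|+|T|\geq|S\cup T|=|V(G)|$, with equality exactly when $S\cap T=\varnothing$. The only nonformal step is the antipode: rather than attempting a vertex-count recursion on $\mG$ (which is awkward because coverings with $S=V(G)$ reproduce the top term), I would pass to the graded dual $\cG^\ast$, whose coproduct is dual to the graded product $\nabla$ and is therefore graded and connected, hence conilpotent; a bialgebra with conilpotent coalgebra structure has an antipode given by the convergent series $\sum_{k\geq 0}(\iota\epsilon-\id)^{\ast k}$, so $\cG^\ast$ is a Hopf algebra and transposing yields the antipode of $\cG$.

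The remaining two conditions in the definition of a combinatorial LC-Hopf algebra I would check by hand, since $\bDelta$ is only filtered and so the second assertion of Lemma~\ref{lc-ext-lem} does not apply. That $\wzeta_\cG$ is an algebra homomorphism is immediate from $0^{|E(G\sqcup H)|}t^{|V(G\sqcup H)|}=\bigl(0^{|E(G)|}t^{|V(G)|}\bigr)\bigl(0^{|E(H)|}t^{|V(H)|}\bigr)$, and setting $t=0$ in $\wzeta_\cG$ recovers the projection onto the empty graph, which is precisely the counit.

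For the second part, Theorem~\ref{lc-thm} gives the canonical morphism as $\Psi(G)=\sum_\alpha(\wzeta_\cG)_\alpha(G)\,M_\alpha$. Coassociativity makes the iterated coproduct explicit, $\bDelta^{(k-1)}(G)=\sum_{S_1\cup\cdots\cup S_k=V(G)}G|_{S_1}\otimes\cdots\otimes G|_{S_k}$, so applying $\wzeta_\cG^{\otimes k}$ and reading off the coefficient of $t^{\alpha_1}\otimes\cdots\otimes t^{\alpha_k}$ shows that $(\wzeta_\cG)_\alpha(G)$ counts the ordered tuples $(S_1,\dots,S_k)$ with $\bigcup_i S_i=V(G)$ and $|S_i|=\alpha_i$ in which each $G|_{S_i}$ is edgeless, the factor $0^{|E(G|_{S_i})|}$ forcing each $S_i$ to be an independent set. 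I would then match this against $\bX_G$ via the bijection sending a proper set-valued coloring $\kappa$ to the tuple $(S_1,\dots,S_k)$, where $c_1<\cdots<c_k$ are the colors occurring in $\bigcup_v\kappa(v)$ and $S_i=\{v:c_i\in\kappa(v)\}$: properness of $\kappa$ is equivalent to independence of each $S_i$, the nonemptiness of each $\kappa(v)$ is equivalent to $\bigcup_i S_i=V(G)$, and $x^\kappa=\prod_i x_{c_i}^{|S_i|}$, so summing over the admissible color choices $c_1<\cdots<c_k$ turns each fixed tuple into $M_{(|S_1|,\dots,|S_k|)}$. Hence $\bX_G=\sum_\alpha(\wzeta_\cG)_\alpha(G)\,M_\alpha=\Psi(G)$. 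I expect the main obstacle to lie entirely in the first part, namely in certifying that the overlapping, non-graded coproduct still defines a bona fide LC-Hopf algebra; once the antipode is produced on the graded-dual side, the remaining combinatorial matching is routine.
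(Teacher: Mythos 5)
Your proposal is correct and follows essentially the same route as the paper: the paper skips a direct proof and instead proves the weighted generalization (Proposition~\ref{wzetaw-prop}), but its argument is exactly your outline --- verify the bialgebra axioms for the overlapping coproduct, invoke Lemma~\ref{lc-ext-lem} via the graded dual to get the LC-Hopf structure, check conditions (b) and (c) by hand, and then compute $\Psi$ from Theorem~\ref{lc-thm} by matching tuples of independent sets covering $V(G)$ with proper set-valued colorings. Your explicit treatment of the antipode on $\cG^\ast$ via conilpotence is a reasonable way to fill in a step the paper leaves implicit in the phrase ``gives a Hopf algebra satisfying the conditions in Lemma~\ref{lc-ext-lem}.''
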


We skip the proof of this result and instead derive a slight generalization.
A \defn{weighted graph} is a pair $(G,\omega)$ consisting of a (finite, undirected) graph $G$ 
and a map $\omega : V(G) \to \PP$ assigned positive integer weights to all vertices.
We do not distinguish between two weighted graphs $(G_1,\omega_1)$ and $(G_2,\omega_2)$
if there is a graph isomorphism $\phi : G_1 \to G_2$ with $\omega_1 = \omega_2 \circ \phi$.

\begin{definition}[\cite{CPS}]
The \defn{kromatic symmetric function} of 
 a weighted graph $(G,\omega)$ is
 $\bX_{(G,\omega)} = \sum_\kappa \prod_{v \in V(G)}  \Bigl(\prod_{i \in \kappa(v)}  x_i\Bigr)^{\omega(v)} \in\mSym$
 where the sum is 
 over all proper set-valued colorings $\kappa : V(G)\to \Set(\PP)$.
\end{definition}

Recall from Example~\ref{kro-ex} that ${r \brace n}$ is the Stirling number of the second kind.
For a partition $\nu$, write $m_\nu = \sum_{\sort(\alpha)=\nu} x^\alpha  \in \Sym$ for the usual monomial symmetric function.

\begin{example}\label{kam-ex}
Crew, Pechenik, and Spirkl \cite[\S3.2]{CPS} define the
\defn{$K$-theoretic augmented monomial symmetric function} of a partition $\lambda$
to be $\kam_\lambda := \bX_{(G,\omega)}$  for the complete graph $G = K_{\ell(\lambda)}$  with weight map $\omega: i\mapsto \lambda_i$.
Let $r_i(\lambda) $ be the number of parts of $\lambda$ equal to $i$.
One has $\kam_{\emptyset} =1$, and if $\lambda$ is nonempty then 
\[
\textstyle \kam_\lambda = \sum_\nu \Bigl(  \prod_{i \in \PP} { r_i(\nu) \brace r_i(\lambda) } \cdot r_i(\lambda)!\Bigr) m_\nu
\]
where the sum is over all integer partitions $\nu$ that have the same set of parts $\{\nu_1,\nu_2,\dots\} = \{\lambda_1,\lambda_2,\dots\}$ as $\lambda$ and that satisfy $r_i(\nu) \geq r_i(\lambda)$ for all $i \in \PP$.
\end{example}

It is natural to consider this generalization of the kromatic symmetric function since one can expand $\bX_G$ 
into a sum of $\kam_\lambda$'s
via a deletion-contraction relation \cite[\S3.3]{CPS}. This is perhaps the most efficient method available to 
actually compute $\bX_G$. 

We can again interpret $\bX_{(G,\omega)}$ via combinatorial LC-Hopf algebras.
Let $\wGwt$ be the linearly compact $\KK$-module with a pseudobasis provided by
all weighted graphs. 
The LC-Hopf algebra structure on $\mG$ extends to $\wGwt$ in the following way. 
The product 
is again given by disjoint union, the unit element is the unique weighted graph with no vertices,
and the counit sends $(G,\omega)\mapsto 0^{|G|}$.
Extend the coproduct $\bDelta $ of $\mG$ by setting
\be\textstyle\bDelta((G,\omega)) := \sum_{S\cup T = V(G)}   (G|_S,\omega|_S) \otimes (G|_T, \omega|_T)\ee
for each weighted graph $(G,\omega)$, and let
\be\textstyle \wzetaw((G,\omega)) := 0^{|E(G)|} \prod_{v \in V(G)} t^{\omega(v)}.\ee
This extends to a  continuous algebra homomorphism
$\wGwt \to \KK\llbracket t \rrbracket$.
It is important to require our vertex weightings $\omega : V(G) \to \PP$
to take positive values since if we allowed $\omega : V(G) \to\NN$ then $\wzetaw$ would not be well-defined
when applied to infinite linear combinations of weighted graphs.

\begin{proposition} \label{wzetaw-prop}
The pair $(\wGwt,\wzetaw)$ is a combinatorial LC-Hopf algebra,
and the unique morphism 
$(\wGwt,\wzetaw)  \to (\mQSym,\wzetaq)$
assigns  each weighted graph $(G,\omega)$ to its kromatic symmetric function $\bX_{(G,\omega)}$.
\end{proposition}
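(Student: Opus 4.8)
The plan is to exhibit $\wGwt$ as the completion $\overline H$ of a connected bialgebra $H$ and then feed it through the machinery of Section~\ref{completions-sect}, mirroring the template behind Proposition~\ref{cG-lc-prop}. First I would let $H$ be the free $\KK$-module on weighted graphs, graded \emph{by total weight} $\|\omega\| := \sum_{v \in V(G)}\omega(v)$ rather than by the number of vertices. This choice is essential: disjoint union adds total weights, so the product is graded, and there are only finitely many weighted graphs of each total weight (positive weights force at most $\|\omega\|$ vertices), giving finite graded dimension — grading by vertex count would fail this. The empty graph is the unique element of weight $0$, so $H$ is connected, and a routine check (a cover $S\cup T=V(G_1)\sqcup V(G_2)$ restricts to covers of the two factors with no cross-edges) shows $\bDelta$ and the counit $(G,\omega)\mapsto 0^{|G|}$ are algebra morphisms, so $H$ is a connected bialgebra, graded as an algebra.

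The key structural point is that $\bDelta$ is only \emph{filtered}, not graded: the term $(G|_S,\omega|_S)\otimes(G|_T,\omega|_T)$ has combined weight $\|\omega\|+\|\omega|_{S\cap T}\|\geq\|\omega\|$, with equality exactly when $S\cap T=\varnothing$, so $\bDelta(h)\in\bigoplus_{i+j\geq n}H_i\otimes H_j$ for $h\in H_n$. Thus the hypotheses of Lemma~\ref{lc-ext-lem} are met, and $\wGwt=\overline H$ is an LC-Hopf algebra (rigorously: $H^\ast$ is a connected graded coalgebra and filtered algebra, hence a genuine Hopf algebra whose LC-dual is $\wGwt$); note the antipode truly needs the completion, since already $\bDelta(P_1)=P_1\otimes 1+1\otimes P_1+P_1\otimes P_1$ for a weight-one point $P_1$ forces $\antipode(P_1)=-P_1+P_1^2-P_1^3+\cdots$. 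Because the coproduct is not graded, the \emph{combinatorial} clause of Lemma~\ref{lc-ext-lem} does not apply, so I would verify the remaining axioms by hand: $\wzetaw$ is the continuous extension $\overline\zeta$ of $\zeta:(G,\omega)\mapsto 0^{|E(G)|}$, which is an algebra homomorphism into $\KK\llbracket t\rrbracket$ since disjoint union adds weights and creates no edges, and $\wzetaw(\cdot)|_{t=0}=0^{|E(G)|}0^{\|\omega\|}=0^{\|\omega\|}=0^{|G|}$ recovers the counit. This last identity is exactly where positivity of the weights is indispensable, matching the remark preceding the proposition. Hence $(\wGwt,\wzetaw)$ is a combinatorial LC-Hopf algebra.

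For the morphism I would invoke Theorem~\ref{lc-thm} directly, as it holds for \emph{any} combinatorial LC-Hopf algebra with no grading hypothesis on the coproduct: the unique morphism sends $(G,\omega)\mapsto\sum_\alpha\zeta_\alpha((G,\omega))M_\alpha$. Iterating coassociativity gives $\bDelta^{(k-1)}((G,\omega))=\sum(G|_{S_1},\omega|_{S_1})\otimes\cdots\otimes(G|_{S_k},\omega|_{S_k})$ over ordered $k$-tuples with $S_1\cup\cdots\cup S_k=V(G)$, and applying $\wzetaw^{\otimes k}$ and extracting the coefficient of $t^{\alpha_1}\otimes\cdots\otimes t^{\alpha_k}$ shows that $\zeta_\alpha((G,\omega))$ counts such tuples of \emph{independent} sets covering $V(G)$ with $\|\omega|_{S_i}\|=\alpha_i$ — the factor $0^{|E(G|_{S_i})|}$ enforces independence, and $\alpha_i>0$ forces $S_i\neq\varnothing$. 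For each degree $N\geq\|\omega\|$ only finitely many $\alpha$ contribute, so the sum is a well-defined element of $\mQSym$; cocommutativity of $\bDelta$ (visible from the symmetry $(S,T)\mapsto(T,S)$ of covers) places the image in $\mSym$, as the statement asserts.

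Finally I would match this against $\bX_{(G,\omega)}$ via the dictionary between proper set-valued colorings $\kappa:V(G)\to\Set(\PP)$ and color-indexed covers by independent sets: set $S_j=\{v:j\in\kappa(v)\}$, so each $S_j$ is independent by properness, $\bigcup_j S_j=V(G)$ because every $\kappa(v)$ is nonempty, and the monomial of $\kappa$ is $\prod_j x_j^{\|\omega|_{S_j}\|}$. Grouping these monomials by the composition $\alpha$ of positive exponents read in increasing color order yields $\bX_{(G,\omega)}=\sum_\alpha c_\alpha M_\alpha$ with $c_\alpha=\zeta_\alpha((G,\omega))$, so the two series coincide. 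I expect the main obstacle to lie not in the formal Hopf-algebraic setup but in this last bookkeeping — correctly translating between the coloring description of $\bX_{(G,\omega)}$ and the iterated-coproduct count $\zeta_\alpha$, while keeping the identification between ``color slots'' and the parts of $\alpha$ (and the role of empty versus nonempty independent sets) straight.
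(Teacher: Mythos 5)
Your proposal is correct and follows essentially the same route as the paper: grade the span of weighted graphs by total weight so that the product is graded of finite graded dimension and the coproduct is filtered, invoke Lemma~\ref{lc-ext-lem} for the LC-Hopf structure, check the combinatorial axioms for $\wzetaw$ directly, and then apply Theorem~\ref{lc-thm}, identifying $\zeta_\alpha((G,\omega))$ with covers of $V(G)$ by independent sets and matching these against proper set-valued colorings. The final bookkeeping you flag as the likely obstacle is handled in the paper exactly as you describe, via the correspondence $\kappa(v)=\{i_j : v\in S_j\}$.
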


We recover Proposition~\ref{cG-lc-prop} by identifying
$\mG$ with the LC-Hopf subalgebra with a pseudobasis given by the weighted graphs $(G,\omega)$
with $\omega : G \to \{1\}$.

\begin{proof}
Restricting the (co)unit and (co)product of $\wGwt$ to the $\KK$-span of all weighted graphs gives a Hopf algebra satisfying the conditions in Lemma~\ref{lc-ext-lem}:
the relevant axioms are straightforward to check.

For example, the product is clearly associative and gives a graded algebra structure of finite graded dimension
when we set the degree of $(G,\omega)$ to be $\sum_{v \in V(G)} \omega(v)$.
For this grading, the coproduct similarly defines a filtered coalgebra.

The compatibility axiom for the product and coproduct holds since if $\tau : M\otimes N \to N \otimes M$ is the usual twist morphism for $\KK$-modules,
then both $\bDelta \circ \nabla $ and 
$(\nabla\otimes \nabla) \circ (1 \otimes \tau \otimes 1)\circ (\bDelta \otimes \bDelta)  $
applied to the tensor $(G_1,\omega_1)\otimes (G_2,\omega_2)$ of two weighted graphs 
 expand as
\[
 \sum_{
\substack{
S_1\cup T_1 = V(G_1)
\\
S_2\cup T_2 = V(G_2)
}}
\( (G_1|_{S_1},\omega_1|_{S_1})\sqcup  (G_2|_{S_2},\omega_2|_{S_2})\)
\otimes
\( (G_1|_{T_1},\omega_1|_{T_1})\sqcup  (G_2|_{T_2},\omega_2|_{T_2})\).
\]

We conclude from Lemma~\ref{lc-ext-lem} that $\mG$ is an LC-Hopf algebra.
The map $\wzetaw$ is a continuous algebra homomorphism
with $\wzetaw(\cdot)|_{t\mapsto 0}$ equal to the counit of $\wGwt$,
so
  $(\wGwt,\wzetaw)$ is a combinatorial LC-Hopf algebra.

 Let $\zeta = \wzetaw$ and $\Psi:=\Psi_{\wGwt,\wzetaw}$ as in Theorem~\ref{lc-thm}. 
 For a weighted graph $(G,\omega)$ and a subset $S\subseteq V(G)$, 
 let $|S|_\omega := \sum_{i \in S} \omega(i)$. Then  
\[
  \zeta^{\otimes k}\circ \bDelta^{(k-1)}((G,\omega))  = \sum
  t^{|S_1|_\omega} \otimes t^{|S_2|_\omega}\otimes \cdots \otimes t^{|S_k|_\omega} \]
  for each $k \in \KK$,
  where the sum is over all (not necessarily disjoint) decompositions
$  S_1 \cup S_2 \cup \dots \cup S_k = V(G)$ such that $G|_{S_i}$ is a discrete graph for all $i \in [k]$, so that $E(G|_{S_i})=\varnothing$.
Therefore
\[
 \Psi((G,\omega))  = \sum_{k \in \NN} \sum_{
  \substack{S_1 \cup S_2 \cup \dots \cup S_k = V(G) 
  \\
  E(G|_{S_i})  = \varnothing\ \forall i\in[k]}
  } \sum_{1\leq i_1<i_2<\dots <i_k} x_{i_1}^{|S_1|_\omega} x_{i_2}^{|S_2|_\omega} \cdots x_{i_k}^{|S_k|_\omega}.
\]
The monomial in the innermost sum is exactly 
$\textstyle\prod_{v \in V(G)}  \Bigl(\prod_{i \in \kappa(v)}  x_i\Bigr)^{\omega(v)}$ for the proper set-valued coloring of $G$
with $\kappa(v) = \{ i_j : j \in [k]\text{ with } v \in S_j\}$ for $v \in V(G)$,
and as the parameters indexing the sums vary, the corresponding $\kappa$ ranges over all proper set-valued colorings of $G$.
Thus $ \Psi((G,\omega)) = \bX_{(G,\omega)}$.
\end{proof}

Recall that $r_i(\lambda) $ is the number of parts equal to $i$ in a partition $\lambda$.
If $\lambda$ and $\nu$ are partitions with $r_i(\lambda) \leq r_i(\nu)$ for all $i$ then let $\nu\ominus \lambda$ be the partition with 
$r_i(\nu\ominus \lambda) = r_i(\nu) - r_i(\lambda)$ for all $i$. 
Also let $\nu\oplus \lambda$ be the partition with $r_i(\nu\oplus\lambda) = r_i(\nu) + r_i(\lambda)$ for all $i$.

 \begin{corollary}
 If $\nu$ is a partition then 
 \[\textstyle \Delta(\kam_\nu) = \sum_{\lambda,\mu} \Big( \prod_{i\in\PP} \tbinom{r_i(\nu)}{r_i(\nu\ominus \lambda),\ r_i(\nu\ominus\mu),\ r_i((\lambda\oplus \mu)\ominus\nu)} \Big)\kam_\lambda \otimes \kam_\mu
 \]
 where the sum is over all partitions $\lambda$ and $\mu$ with 
 \[0\leq r_i(\lambda) \leq r_i(\nu)\quand 0\leq r_i(\mu)\leq r_i(\nu)\leq r_i(\lambda)+r_i(\mu)\quad\text{for all }i\in \PP.\]
 \end{corollary}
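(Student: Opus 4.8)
The plan is to exploit that the unique morphism $\Psi := \Psi_{\wGwt,\wzetaw} : (\wGwt,\wzetaw)\to(\mQSym,\wzetaq)$ of Proposition~\ref{wzetaw-prop} is in particular a morphism of coalgebras, so it intertwines the two coproducts: $\Delta\circ\Psi = (\Psi\htimes\Psi)\circ\bDelta$. Writing $m=\ell(\nu)$ and letting $\omega:i\mapsto\nu_i$, Example~\ref{kam-ex} gives $\Psi(K_m,\omega)=\kam_\nu$, so that $\Delta(\kam_\nu)=(\Psi\htimes\Psi)\bigl(\bDelta(K_m,\omega)\bigr)$. Thus it suffices to expand the combinatorial coproduct $\bDelta(K_m,\omega)=\sum_{S\cup T=V(K_m)}(K_m|_S,\omega|_S)\otimes(K_m|_T,\omega|_T)$ and then apply $\Psi$ to each tensor factor.

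Every induced subgraph of a complete graph is again complete, and by definition the weighted graph $(K_r,\omega)$ depends only on the multiset $\{\omega(v)\}$ of its vertex weights, i.e.\ on the partition obtained by sorting those weights. Hence $\Psi(K_m|_S,\omega|_S)=\kam_{\sort(\omega|_S)}$ and likewise for $T$, giving
\[\textstyle\Delta(\kam_\nu)=\sum_{S\cup T=V(K_m)}\kam_{\sort(\omega|_S)}\otimes\kam_{\sort(\omega|_T)}.\]
It remains to count, for each ordered pair of partitions $(\lambda,\mu)$, how many (possibly overlapping) decompositions $S\cup T=V(K_m)$ satisfy $\sort(\omega|_S)=\lambda$ and $\sort(\omega|_T)=\mu$.

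Here the key point --- and the only real subtlety --- is that $\bDelta$ ranges over decompositions in which $S$ and $T$ need not be disjoint, so each vertex lies in exactly one of the three regions $S\setminus T$, $T\setminus S$, or $S\cap T$; this is what turns a binomial count into a trinomial one. I would record the choice weight by weight: among the $r_i(\nu)$ vertices of weight $i$, let $a_i$, $b_i$, $c_i$ be the numbers placed in $S\setminus T$, $T\setminus S$, and $S\cap T$ respectively, so that $a_i+b_i+c_i=r_i(\nu)$, while $r_i(\lambda)=a_i+c_i$ and $r_i(\mu)=b_i+c_i$. Solving these uniquely gives $a_i=r_i(\nu)-r_i(\mu)$, $b_i=r_i(\nu)-r_i(\lambda)$, and $c_i=r_i(\lambda)+r_i(\mu)-r_i(\nu)$, which are exactly $r_i(\nu\ominus\mu)$, $r_i(\nu\ominus\lambda)$, and $r_i((\lambda\oplus\mu)\ominus\nu)$. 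The nonnegativity requirements $a_i,b_i,c_i\geq 0$ translate precisely into the stated constraints $0\le r_i(\lambda)\le r_i(\nu)$ and $0\le r_i(\mu)\le r_i(\nu)\le r_i(\lambda)+r_i(\mu)$. Since the placements for distinct weight values are independent, the number of decompositions producing $(\lambda,\mu)$ is $\prod_{i\in\PP}\binom{r_i(\nu)}{a_i,b_i,c_i}$, which is exactly the displayed product of multinomial coefficients; summing $\kam_\lambda\otimes\kam_\mu$ over all $(\lambda,\mu)$ with these multiplicities gives the claim. I expect getting this three-region split right to be the main task, the rest being a direct unwinding of the coalgebra-morphism property.
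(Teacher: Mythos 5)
Your proposal is correct and follows essentially the same route as the paper: both apply the coalgebra-morphism property from Proposition~\ref{wzetaw-prop} to the weighted complete graph realizing $\kam_\nu$, reduce to counting non-disjoint decompositions $S\cup T=V(K_{\ell(\nu)})$ with prescribed restricted weight partitions, and obtain the product of trinomial coefficients. The only difference is that you write out the three-region counting argument ($S\setminus T$, $T\setminus S$, $S\cap T$, done independently for each weight value) that the paper dismisses as ``a straightforward exercise,'' and your solution for $a_i,b_i,c_i$ matches the stated multinomial entries and constraints exactly.
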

 
 \begin{proof}
Proposition~\ref{wzetaw-prop} implies that 
 \be
 \textstyle\Delta( \bX_{(G,\omega)}) = \sum_{S\cup T = V(G)}  \bX_{(G|_S,\omega|_S)} \otimes \bX_{(G|_S,\omega|_S)}
 \ee
 for all weighted graphs $(G,\omega)$. Applying this to Example~\ref{kam-ex} gives
 \be
 \textstyle\Delta(\kam_\nu) = \sum_{S\cup T= [\ell(\nu)]} \kam_{\nu_S} \otimes \kam_{\nu_T}
 \ee
 where if $S= \{i_1<i_2<\dots\}$ then $\nu_S$ means the partition $(\nu_{i_1} \geq \nu_{i_2} \geq \dots)$.
It is a straightforward exercise to check that the number of not necessarily disjoint unions $S\cup T  = [\ell(\nu)]$
that give $\nu_S=\lambda$ and $\nu_T=\mu$ is the product of multinomial coefficients $ \prod_{i\in\PP} \tbinom{r_i(\nu)}{r_i(\nu\ominus \lambda),\ r_i(\nu\ominus\mu),\ r_i((\lambda\oplus \mu)\ominus\nu)} $.
 \end{proof}
\subsection{Set-valued $P$-partitions}

If $S,T \in \Set(\PP)$ then write $S \prec T$ if $\max(S) < \min(T)$ and $S\preceq T$ if $\max(S) \leq \min(T)$.
For a labeled poset $(D,\gamma)$ define
\[
\bGamma(D,\gamma)= \sum_{\kappa} x^\kappa
 \in \KK\llbracket x_1,x_2,\dots\rrbracket \]
where the sum is over all maps $\kappa : V(D) \to \Set(\PP)$ 
that are \defn{set-valued $P$-partitions} for $P=(D,\gamma)$ in the sense of \cite{LamPylyavskyy,LM2019}, meaning   
\ben
\item[(a)] $\kappa(u) \preceq \kappa(v)$ whenever $u\to v \in E(D)$ and $\gamma(u) < \gamma(v)$,
and  
\item[(b)] $\kappa(u) \prec \kappa(v)$ whenever $u\to v \in E(D)$ and $\gamma(u) > \gamma(v)$.
\een

\begin{example}
If $D = (1 \to 2 \to 3 \to \dots \to n)$ is an $n$-element chain and $S$ is the set of $i \in [n-1] $ with $\gamma(i) > \gamma(i+1)$ then 
we define
$ \bL_{n,S}:=\bGamma(D,\gamma) $.
Following \cite{LamPylyavskyy}, we construct the \defn{multifundamental quasisymmetric function} of a composition $\alpha$  as the power series
$\bL_{\alpha} :=  \bL_{n,S}$ where $n=|\alpha|$ and $S =I(\alpha)$ for 
\[  I(\alpha):= \{\alpha_1,\alpha_1+\alpha_2,\alpha_1+\alpha_2+\alpha_3,\dots\}\setminus\{n\}.\]
These elements form another pseudobasis for $\mQSym$ \cite{LamPylyavskyy}.
We say that a power series in $\mQSym$ is \defn{multifundamental positive} if
it can be expressed as a possibly infinite sum of (not necessarily distinct) $\bL_{\alpha} $'s.
\end{example}

 For a directed acyclic graph $D$, let
 $\bGamma(D) = \bGamma(D,\gamma^\op) =\sum_{\kappa} x^\kappa$
where the sum is over all maps $\kappa : V(D) \to \Set(\PP)$ with 
$\kappa(u) \prec \kappa(v)$ if $u\to v \in E(D)$.

\begin{example}\label{bare-ex}
If $D = (1 \to 2 \to 3 \to \dots \to n)$ is an $n$-element path
then define 
$ \bare_n := \bGamma(D)
  = \sum_{k=n}^\infty \tbinom{k-1}{n-1}  e_{k}.$
For each partition $\lambda$
let $\bare_\lambda := \bare_{\lambda_1}\bare_{\lambda_2}\cdots.$
These functions are a pseudobasis for $\mSym$.
\end{example}

Recall from Example~\ref{kro-ex} that $ \bX_{K_n}= n!\sum_{r= n}^\infty {r \brace n} e_r $

\begin{proposition}\label{bXk-prop}
If $n \in \PP$ then $ \bX_{K_n}=  n! \sum_{r= n}^\infty {r -1\brace n-1} \bare_r$.
\end{proposition}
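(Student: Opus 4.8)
The plan is to expand both sides in terms of the elementary symmetric functions $e_r$ and reduce the claim to a classical recurrence for the Stirling numbers ${k \brace n}$. By Example~\ref{bare-ex} we have $\bare_r = \sum_{k=r}^\infty \binom{k-1}{r-1} e_k$, so I would substitute this into the proposed right-hand side and interchange the order of summation to obtain
\[
n!\sum_{r=n}^\infty {r-1 \brace n-1}\, \bare_r \;=\; n!\sum_{k=n}^\infty \Bigl(\,\sum_{r=n}^{k}\binom{k-1}{r-1}{r-1 \brace n-1}\Bigr) e_k.
\]
Since each $e_k$ is homogeneous of degree $k$, comparing homogeneous components of a fixed degree shows both that this interchange is valid and that the coefficient of $e_k$ is the displayed finite sum. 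Recalling from Example~\ref{kro-ex} that $\bX_{K_n} = n!\sum_{k=n}^\infty {k \brace n} e_k$, the proposition therefore reduces to the purely combinatorial identity
\[
{k \brace n} \;=\; \sum_{r=n}^{k}\binom{k-1}{r-1}{r-1 \brace n-1}\qquad\text{for all }n\leq k.
\]

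To finish, I would establish this identity. Reindexing by $j=r-1$ and discarding the vanishing terms with $j<n-1$ rewrites it as ${k \brace n} = \sum_{j=0}^{k-1}\binom{k-1}{j}{j \brace n-1}$, which is the standard ``vertical'' recurrence for Stirling numbers of the second kind. I would prove it by a direct counting argument: given a partition of $[k]$ into $n$ blocks, let $B$ be the block containing the element $k$, and let $j = |[k-1]\setminus B|$ be the number of elements lying outside $B$. Choosing those $j$ elements from $[k-1]$ in $\binom{k-1}{j}$ ways and partitioning them into $n-1$ nonempty blocks in ${j \brace n-1}$ ways determines the entire partition, since $B$ is then forced to be the complement; summing over $j$ yields the identity.

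There is no serious obstacle here. The one point requiring a little care is the interchange of the two infinite sums, but this is harmless in $\mSym$ because the $e_k$ have strictly increasing degrees, so only the finitely many indices $r\leq k$ contribute to each homogeneous degree. For a self-contained alternative one could instead argue bijectively, interpreting $\bX_{K_n}$ as the generating function for families of $n$ pairwise disjoint nonempty finite subsets of $\PP$ and $\bare_r$ as that for $\prec$-increasing chains $S_1\prec S_2\prec\cdots\prec S_r$; but the change-of-basis computation above is the most economical route.
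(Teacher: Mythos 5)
Your proposal is correct and follows essentially the same route as the paper's proof: substitute the $e$-expansion of $\bare_r$ from Example~\ref{bare-ex}, interchange the sums, and reduce to the identity $\sum_{s}\tbinom{r-1}{s}{\textstyle{s\brace n-1}} = {\textstyle{r\brace n}}$, which the paper also proves by the same counting argument (choosing the complement of the block containing the largest element and partitioning it into $n-1$ blocks). No substantive differences.
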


\begin{proof}
Substituting the $e$-expansion of $\bare_n$ from Example~\ref{bare-ex} gives  \[ 
 \sum_{s= n}^\infty {\textstyle{s -1\brace n-1}} \bare_s =
  \sum_{s= n}^\infty\sum_{r=s}^\infty {\textstyle{s -1\brace n-1}}\tbinom{r-1}{s-1} e_{r} = 
   \sum_{r=n}^\infty \(\sum_{s= n-1}^{r-1}  {\textstyle{s\brace n-1}}\tbinom{r-1}{s}\) e_r .
\]
One has $\sum_{s= n-1}^{r-1}  {\textstyle{s \brace n-1}}\tbinom{r-1}{s} =\sum_{S\subseteq [r-1]} {|S| \brace n-1} = {r \brace n}$
since 
to form an $n$-block set partition of $[r]$, one can first choose any subset of $[r-1]$ to form the complement of the block containing $r$, and then divide this set into $n-1$ blocks.
Thus, we have $ n! \sum_{s= n}^\infty {s -1\brace n-1} \bare_s = n!\sum_{r= n}^\infty {r \brace n} e_r = \bX_{K_n}$.
\end{proof}

A \defn{multilinear extension} of a directed acyclic graph $D$  with $n$ vertices 
is a sequence $w=(w_1,w_2,\dots,w_N)$ with $V(D) = \{w_1,w_2,\dots,w_N\}$
such that $i<j$ whenever $w_i \to w_j \in E(D)$, and $w_i \neq w_{i+1}$ for all $i \in [N-1]$.
If $\mL(D)$ is the set of all multilinear extensions of $D$
and $\gamma : V(D) \to \ZZ$ is injective, then
\be\label{ML-eq}\bGamma(D,\gamma) = \sum_{w \in \mL(D)}  \bL_{\ell(w), \Des(w,\gamma)}\ee
where $\Des(w,\gamma) := \{ i\in [\ell(w)-1] : \gamma(w_i)>\gamma(w_{i+1})\}$ for $w \in \mL(D)$ \cite{LamPylyavskyy}.

 \subsection{Acyclic multi-orientations}
 
 Let $G$ be a graph. 
 An \defn{acyclic multi-orientation} of $G$ 
is an acyclic orientation of the $\alpha$-clan graph $\Cl_\alpha(G)$ from Remark~\ref{clan-rmk}
for some $\alpha : V(G) \to \PP$,
such that for each  $v \in V(G)$ and each $i \in [\alpha(v)-1]$ there are at least two directed paths 
from the vertex $(v,i+1) \in V(\Cl_\alpha(G))$ to the vertex $(v,i) \in V(\Cl_\alpha(G))$.

Departing from our usual convention, we consider two acyclic multi-orientations to be the same if and only if they have the same vertices and the same directed edges, rather than if they are merely isomorphic.
Let $\MAO(G)$ be the set of all acyclic multi-orientations of $G$.
We say that $D \in \MAO(G)$ has type $\alpha$ if $D$ is an acyclic orientation of $\Cl_\alpha(G)$.
 
\begin{proposition}\label{MAO-prop}
Let $D$ be an acyclic orientation of $\Cl_\alpha(G)$.
Then $D \in \MAO(G)$ if and only if for each $v \in V(G)$ both of the following properties hold:
\ben
\item[(a)] If $i,j \in [\alpha(v)]$ have $i>j$  then $(v,i)\to (v,j)$ is a directed edge in $D$.
\item[(b)] If $i \in [\alpha(v)-1]$ then there exists a directed path in $D$ 
 involving no edges of the form $(v,j) \to (v,k)$
from $(v,i+1)$ to $(v,i)$.
\een
 \end{proposition}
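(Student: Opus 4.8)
The plan is to deduce both implications from a single structural fact about how directed paths in $D$ interact with one clan at a time. Fix $v \in V(G)$ and write $C_v = \{(v,1),\dots,(v,\alpha(v))\}$ for its clan. Since $C_v$ is a clique in $\Cl_\alpha(G)$ and $D$ is acyclic, the restriction of $D$ to $C_v$ is an acyclic tournament, hence a transitive one; in particular there is a unique directed edge between any two elements of $C_v$, and these edges linearly order the indices in $[\alpha(v)]$.

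First I would establish (a). If $D \in \MAO(G)$, then for each $i \in [\alpha(v)-1]$ there is at least one directed path from $(v,i+1)$ to $(v,i)$, so $(v,i)$ is reachable from $(v,i+1)$; were the clan edge oriented $(v,i)\to(v,i+1)$, this path would close a directed cycle, contradicting acyclicity. Hence $(v,i+1)\to(v,i)$ for every consecutive pair, and chaining these edges yields a directed path $(v,i)\to(v,i-1)\to\cdots\to(v,j)$ for any $i>j$, which by the same acyclicity argument forces the clan edge $(v,i)\to(v,j)$. This is exactly (a).

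The crux is the following observation, valid as soon as (a) holds: there is no directed path in $D$ from $(v,j)$ to $(v,k)$ when $j<k$, since (a) supplies the edge $(v,k)\to(v,j)$ and any such path would create a cycle. Consequently, if $P$ is any directed path from $(v,i+1)$ to $(v,i)$ and $(v,b_0),(v,b_1),\dots,(v,b_r)$ are the clan vertices of $v$ that $P$ meets in order (with $b_0=i+1$ and $b_r=i$), then each contiguous segment of $P$ is a directed path from $(v,b_t)$ to $(v,b_{t+1})$, forcing $b_t>b_{t+1}$. A strictly decreasing integer sequence from $i+1$ to $i$ has length two, so $r=1$: the path $P$ meets $C_v$ only at its endpoints, and therefore uses no edge of the form $(v,j)\to(v,k)$ unless $P$ is the direct edge $(v,i+1)\to(v,i)$ itself.

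With this in hand both directions are short. For the forward direction (a) is done above, and for (b) the hypothesis $D\in\MAO(G)$ supplies a second directed path $P$ distinct from the direct edge; by the observation $P$ uses no clan edge, which is precisely the path demanded by (b). For the converse, (a) provides the direct edge as one directed path and (b) provides a clan-edge-free path, which is distinct from the direct edge (a clan edge), so there are at least two directed paths from $(v,i+1)$ to $(v,i)$ and thus $D\in\MAO(G)$. The only genuine obstacle is locating the second, clan-free path required by (b); the strictly-decreasing-index observation is what makes this automatic, and I expect verifying that observation to be the heart of the argument.
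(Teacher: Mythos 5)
Your proof is correct and follows essentially the same route as the paper: both first derive (a) from acyclicity plus the existence of clan-internal paths, then use (a) to show that any directed path from $(v,i+1)$ to $(v,i)$ other than the direct edge cannot pass through a third clan vertex of $v$ (your strictly-decreasing-index argument is the paper's observation that no path through $(v,j)$ can reach $(v,k)$ with $k\geq j$), after which both implications are immediate. No gaps.
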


 \begin{proof}
 If $D$ has property (a) then $(v,i+1)\to (v,i)$ is a directed path in $D$, and if $D$ also has property (b)
 then there is a second directed path from $(v,i+1)$ to $ (v,i)$, so $D \in \MAO(G)$.
 
Conversely, assume $D \in \MAO(G)$.
 Suppose $i,j \in [\alpha(v)]$ and $i>j$. Then $D$ contains a directed path from $(v,i)$ to $(v,j)$,
so  $(v,j)\to (v,i)$ cannot be an edge in $D$ as this would create a cycle.  As $\{(v,i),(v,j)\} $ is an edge
in $\Cl_\alpha(G)$, the edges of $D$ must contain $(v,i)\to (v,j)$ instead.

Since $D$ is acyclic, no directed path in $D$ passing through the vertex $(v,j)$ can ever reach $(v,k)$
for $k \geq j$. Thus, a directed path in $D$ from $(v,i+1)$ to $(v,i)$ can only involve an edge of the form
$(v,j) \to (v,k)$ if the path has length one and $j=i+1$ and $k=i$.
As $D$ contains a second directed path from $(v,i+1)$ to $(v,i)$ in addition to this one, property (b) must hold. 
\end{proof}

\begin{example}\label{MAO-ex}
If $|E(G)|=0$ then  $\MAO(G) = \{G\}$ and otherwise
 $\MAO(G)$ is infinite.
If $G=K_2$ 
  then $\MAO(G)$ 
has two (isomorphic but considered to be distinct) elements with $k$ vertices for each $k\geq 2$; see Figure~\ref{MAO-fig}.
\end{example}

\begin{figure}[h]
\[\centerline{
    \begin{tikzpicture}[xscale=1.2, yscale=1.8,>=latex,inner sep=0.5mm,baseline=(z.base)]
    \node at (0,1.25) (z) {};
      \node at (-.25,2) (T1) {$v_1$};
      \node at (0,1) (T2) {$v_2$};
      \node at (-.25,0) (T3) {$v_3$};
      \node at (1.25,2.5) (U1) {$w_1$};
      \node at (1,1.5) (U2) {$w_2$};
      \node at (1.25,0.5) (U3) {$w_3$};
      \draw[->]  (T3) -- (T2);
      \draw[->]  (T2) -- (T1);
      \draw[->]  (U3) -- (U2);
      \draw[->]  (U2) -- (U1);
      \draw[->]  (T3) -- (U3);
      \draw[->]  (U3) -- (T2);
      \draw[->]  (T2) -- (U2);
      \draw[->]  (U2) -- (T1);
      \draw[->]  (T1) -- (U1);
      \draw[->]  (T3) -- (T1);
      \draw[->]  (U3) -- (U1);
     \end{tikzpicture}
     \qquad
         \begin{tikzpicture}[xscale=1.2, yscale=1.8,>=latex,inner sep=0.5mm,baseline=(z.base)]
    \node at (0,1.25) (z) {};
      \node at (-0.25,2.5) (T1) {$v_1$};
      \node at (0,1.5) (T2) {$v_2$};
      \node at (-0.25,0.5) (T3) {$v_3$};
      \node at (1.25,2) (U1) {$w_1$};
      \node at (1,1) (U2) {$w_2$};
      \node at (1.25,0) (U3) {$w_3$};
      \draw[->]  (T3) -- (T2);
      \draw[->]  (T3) -- (T1);
      \draw[->]  (T2) -- (T1);
      \draw[->]  (U3) -- (U2);
      \draw[->]  (U3) -- (U1);
      \draw[->]  (U2) -- (U1);
      \draw[->]  (U3) -- (T3);
      \draw[->]  (T3) -- (U2);
      \draw[->]  (U2) -- (T2);
      \draw[->]  (T2) -- (U1);
      \draw[->]  (U1) -- (T1);
     \end{tikzpicture}
\qquad
    \begin{tikzpicture}[xscale=1.2, yscale=1.8,>=latex,inner sep=0.5mm,baseline=(z.base)]
    \node at (0,1.5) (z) {};
      \node at (-0.5,3) (T0) {$v_1$};
      \node at (0,2) (T1) {$v_2$};
      \node at (0,1) (T2) {$v_3$};
      \node at (-0.5,0) (T3) {$v_4$};
      \node at (1.25,2.5) (U1) {$w_1$};
      \node at (1,1.5) (U2) {$w_2$};
      \node at (1.25,0.5) (U3) {$w_3$};
      \draw[->]  (T3) -- (T2);
      \draw[->]  (T3) -- (T1);
      \draw[->]  (T3) -- (T0);
      \draw[->]  (T2) -- (T1);
      \draw[->]  (T2) -- (T0);
      \draw[->]  (T1) -- (T0);
      \draw[->]  (U3) -- (U2);
      \draw[->]  (U2) -- (U1);
      \draw[->]  (U3) -- (U1);
      \draw[->]  (T3) -- (U3);
      \draw[->]  (U3) -- (T2);
      \draw[->]  (T2) -- (U2);
      \draw[->]  (U2) -- (T1);
      \draw[->]  (T1) -- (U1);
      \draw[->]  (U1) -- (T0);
     \end{tikzpicture}
\qquad
    \begin{tikzpicture}[xscale=1.2, yscale=1.8,>=latex,inner sep=0.5mm,baseline=(z.base)]
    \node at (0,1.5) (z) {};
      \node at (-0.25,2.5) (T1) {$v_1$};
      \node at (0,1.5) (T2) {$v_2$};
      \node at (-0.25,0.5) (T3) {$v_3$};
      \node at (1.5,3) (U0) {$w_1$};
      \node at (1,2) (U1) {$w_2$};
      \node at (1,1) (U2) {$w_3$};
      \node at (1.5,0) (U3) {$w_4$};
      \draw[->]  (T3) -- (T2);
      \draw[->]  (T3) -- (T1);
      \draw[->]  (T2) -- (T1);
      \draw[->]  (U3) -- (U2);
      \draw[->]  (U3) -- (U1);
      \draw[->]  (U3) -- (U0);
      \draw[->]  (U2) -- (U1);
      \draw[->]  (U2) -- (U0);
      \draw[->]  (U1) -- (U0);
      \draw[->]  (U3) -- (T3);
      \draw[->]  (T3) -- (U2);
      \draw[->]  (U2) -- (T2);
      \draw[->]  (T2) -- (U1);
      \draw[->]  (U1) -- (T1);
      \draw[->]  (T1) -- (U0);
     \end{tikzpicture}
}\]
\caption{
The distinct acyclic multi-orientations with 6 and 7 vertices for the complete graph with vertex set $\{v,w\}$. Here, we have written the vertices of these directed graphs  
as $v_i$ and $w_i$ instead of $(v,i)$ and $(w,i)$ to save space.
}\label{MAO-fig}
\end{figure}

A \defn{source} in a directed graph is a vertex with no incoming edges.
One can relate the coefficients  in the $\bare$-expansion of $\bX_G$
to the source counts of its acyclic multi-orientations, generalizing a result of Stanley \cite[Thm.~3.3]{Stanley95}.

\begin{remark}
For this type of statement to make sense,
the relevant coefficients must be uniquely defined integers,
so we should assume that the integral domain $\KK$ contains $\ZZ$.
Since we need to divide by $2$ in the next lemma, for the rest of this subsection we 
make the slightly strongly assumption that $\KK\supseteq \QQ$.
\end{remark}

Define $\varphi : \mQSym \to \KK\llbracket t\rrbracket$ to be the continuous linear map with 
\[ \varphi(\bL_{n,S}) = \begin{cases} t(\tfrac{t-1}{2})^i&\text{if }S = [n-1] \setminus [i]\text{ for some }0\leq i <n
\\
0&\text{otherwise}
\end{cases}
\]
for $n \in \PP$ and $S\subseteq [n-1]$. We also set $\varphi(\bL_{0,\varnothing}) = 1$.
The next result generalizes the key \emph{Claim} in the proof of \cite[Thm.~3.3]{Stanley95}, and follows by a similar argument.
\begin{lemma}\label{m-lem} If $D$ is a directed acyclic graph then $\varphi(\bGamma(D)) = t^{m}$ where $m$ is the number of source vertices in $D$.
\end{lemma}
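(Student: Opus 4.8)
The plan is to evaluate $\varphi$ directly on the multilinear-extension expansion \eqref{ML-eq}. Fixing a decreasing labeling $\gamma^\op$ of $D$, we have $\bGamma(D) = \sum_{w \in \mL(D)} \bL_{\ell(w),\, \Des(w,\gamma^\op)}$, so by continuity and linearity $\varphi(\bGamma(D)) = \sum_{w \in \mL(D)} \varphi(\bL_{\ell(w),\,\Des(w,\gamma^\op)})$. By the definition of $\varphi$, a term survives only when $\Des(w,\gamma^\op) = [\ell(w)-1]\setminus[i]$ is a final segment of descent positions, that is, when the label sequence $\gamma^\op(w_1),\gamma^\op(w_2),\dots$ is strictly increasing up to some peak position $i+1$ and strictly decreasing thereafter. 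Such a word contributes exactly $t\bigl(\tfrac{t-1}{2}\bigr)^i$, where $i$ equals its number of ascents.

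First I would record the shape of a surviving word. Since $\gamma^\op$ is injective and every vertex of $D$ occurs in $w$, the maximum label is attained at the peak; as $\gamma^\op$ is decreasing, this peak vertex $p$ has no incoming edge and is therefore a source, and it is the \emph{same} vertex for every surviving $w$. A surviving word is then determined by the pair $(A,B)$, where $A$ is the set of vertices in its increasing prefix and $B$ the set in its decreasing suffix: one lists $A$ in increasing $\gamma^\op$-order and then $B\setminus\{p\}$ in decreasing $\gamma^\op$-order, with $p \in A\cap B$ and $A\cup B = V(D)$. Each non-peak vertex lies in exactly one of $A\setminus B$, $B\setminus A$, or $A\cap B$, and the number of ascents is $i = |A|-1$.

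The crux is to identify which pairs $(A,B)$ give legal multilinear extensions, using the forward-edge condition together with the decreasing labeling. I expect this to be the most delicate step, as it requires tracking how the two monotone runs interact with the requirement that all copies of $u$ precede all copies of $v$ for every edge $u\to v$. Two facts must be established. First, no non-source vertex $v$ can lie in $A$: if $u\to v$ is an incoming edge then $\gamma^\op(u) > \gamma^\op(v)$, so any occurrence of $u$ lies after the prefix occurrence of $v$, whether $u$ sits in the increasing prefix (larger label, hence later) or in the decreasing suffix (after the peak); this violates the forward-edge condition, so every non-source is descending-only. Second, every source $s$ may be placed freely in $A\setminus B$, $B\setminus A$, or $A\cap B$: a source has no incoming edge, each outgoing edge $s\to v$ runs to a descending-only non-source with $\gamma^\op(s) > \gamma^\op(v)$, and in each of the three placements all copies of $s$ still precede the copy of $v$; moreover sources form an independent set, so no edge couples two sources.

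With the legal pairs so described, the sum factors over vertices. Writing $\bigl(\tfrac{t-1}{2}\bigr)^{|A|-1} = \prod_{v\neq p}\bigl(\tfrac{t-1}{2}\bigr)^{[v\in A]}$ and summing the independent per-vertex choices, each non-source contributes the single factor $1$ (being forced into $B\setminus A$), while each non-peak source contributes $\tfrac{t-1}{2} + 1 + \tfrac{t-1}{2} = t$ from its three options. Together with the prefactor $t$ supplied by the peak source $p$, this gives $\varphi(\bGamma(D)) = t\cdot t^{m-1} = t^m$, where $m$ is the number of sources. I would finally sanity-check the exponent bookkeeping on small cases (an isolated vertex, a single edge $a\to b$, and a short chain) to confirm the forced placements and the identity $i = |A|-1$.
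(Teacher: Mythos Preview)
Your proof is correct and follows essentially the same approach as the paper: both expand $\bGamma(D)$ via \eqref{ML-eq}, identify the surviving multilinear extensions as an increasing run of sources up to the maximum-label vertex followed by a decreasing run, and then sum the contributions. Your organization of the final sum as a product over vertices (each non-peak source contributing $\tfrac{t-1}{2}+1+\tfrac{t-1}{2}=t$, each non-source contributing $1$) is just a repackaging of the paper's binomial computation $t\sum_{i}\binom{m-1}{i}2^{i}\bigl(\tfrac{t-1}{2}\bigr)^{i}=t\cdot t^{m-1}$, and you supply slightly more justification than the paper for why the increasing prefix can contain only sources.
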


\begin{proof}
Choose a decreasing labeling $\gamma^\op : V(D) \to \ZZ$ for $D$. 
Then by \eqref{ML-eq} we have $\bGamma(D)  =  \sum_{w \in \mL(D)} \bL_{\ell(w), \Des(w,\gamma^{\op})}$.

The only way to obtain a multilinear extension $w=(w_1,w_2,\dots,w_n)$ of $D$ with $\Des(w,\gamma^\op) = [n-1] \setminus [i]$ is as follows. Let $w_{i+1} \in V(D)$ be the vertex maximizing $\gamma^\op(w_{i+1})$; this element must be  a source vertex. Then define $w_1,w_2,\dots,w_i$ to be any $i$ distinct sources vertices other than $w_{i+1}$ listed in increasing order of their labels under $\gamma^{\op}$. 
Finally, choose a subset of $\{w_1,w_2,\dots,w_i\}$  
and
define $w_{i+2},w_{i+3},\dots,w_{n}$ to consist of these vertices 
plus all elements of $V(D) \setminus \{w_1,w_2,\dots,w_{i+1}\}$
arranged
in decreasing order of their $\gamma^{\op}$ labels.

If $D$ has $m$ source vertices then there are $\binom{m-1}{i}$ choices for $(w_1,w_2,\dots,w_i)$
and $2^i$ choices for the subset of these vertices repeated in   $(w_{i+2},w_{i+3},\dots,w_{n})$, so using the binomial formula we get
$\textstyle
\varphi(\bGamma(D)) 
= t  \sum_{i=0}^{m-1}\tbinom{m-1}{i} 2^i(\tfrac{t-1}{2})^i  
 =
t^m
$.
\end{proof}

\begin{proposition} Let $G$ be a graph and suppose 
$\bX_G = \sum_{\lambda} c_\lambda \bare_\lambda$ for some coefficients $c_\lambda \in \ZZ$.
Then the number of acyclic multi-orientations of $G$ with exactly $j$ sources and $k$ vertices is $\sum_{{\ell(\lambda)=j, |\lambda| =k}} c_\lambda\in \NN$.
\end{proposition}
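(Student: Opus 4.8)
The plan is to adapt Stanley's proof of \cite[Thm.~3.3]{Stanley95}, but with a bivariate refinement that is forced on us by the $K$-theoretic setting. The starting point is the $K$-theoretic analogue of the identity $X_G = \sum_{D \in \AO(G)}\Gamma(D)$ from Proposition~\ref{diagram1-prop}, namely
\[
\bX_G = \sum_{D \in \MAO(G)} \bGamma(D),
\]
which I would first record (it is the $K$-theoretic lift of Stanley's expansion, with acyclic multi-orientations playing the role of acyclic orientations). Writing $k(D)$ for the number of vertices and $m(D)$ for the number of sources of $D$, note that $\bGamma(D)$ has lowest degree $k(D)$, and that for each fixed $k$ there are only finitely many $D \in \MAO(G)$ with $k(D)=k$ (finitely many types $\alpha$ with $\sum_v \alpha(v)=k$, each contributing finitely many orientations of the finite graph $\Cl_\alpha(G)$), so the sum converges in $\mSym$. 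Let $N_{j,k}$ denote the number of $D \in \MAO(G)$ with $m(D)=j$ and $k(D)=k$; the goal is $N_{j,k} = \sum_{\ell(\lambda)=j,\,|\lambda|=k} c_\lambda$.

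The essential difficulty is that the vertex count cannot be detected at the level of quasisymmetric functions: a single multifundamental $\bL_{n,S}$ appears in the expansion \eqref{ML-eq} of $\bGamma(D)$ for DAGs $D$ of several different vertex counts, so there is no continuous linear map on $\mQSym$ sending $\bGamma(D) \mapsto u^{k(D)}$. Applying $\varphi$ by itself collapses the vertex count completely, and the resulting series $\sum_D t^{m(D)}$ diverges. The way around this is to track ordinary degree instead. I would introduce the continuous linear map $\psi : \mQSym \to \KK\llbracket t,u\rrbracket$ determined on the multifundamental pseudobasis by $\psi(\bL_{n,S}) = u^{n}\,\varphi(\bL_{n,S})$, so that $\psi$ multiplies the $\bL_{n,S}$-component by $u^{n}$ before applying $\varphi$.

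The key computation is then a bivariate strengthening of Lemma~\ref{m-lem}. Repeating its proof verbatim while marking each multilinear extension $w$ by $u^{\ell(w)}$, the surviving extensions with $\Des(w,\gamma^\op)=[n-1]\setminus[i]$ have length $\ell(w)=k(D)+r$, where $r$ counts the repeated source vertices; summing $t(\tfrac{t-1}{2})^i u^{k(D)+r}$ against the same binomial counts $\binom{m(D)-1}{i}\binom{i}{r}$ gives
\[
\psi(\bGamma(D)) \;=\; \sum_{w \in \mL(D)} u^{\ell(w)}\varphi\bigl(\bL_{\ell(w),\,\Des(w,\gamma^\op)}\bigr) \;=\; t\,u^{k(D)}\,S^{\,m(D)-1}, \qquad S := \tfrac{t+1}{2}+\tfrac{t-1}{2}u .
\]
Thus $\psi(\bGamma(D))$ depends only on $m(D)$ and $k(D)$, and specializing $u=1$ recovers $\varphi(\bGamma(D))=t^{m(D)}$ as a consistency check. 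Applying the same formula to the disjoint unions of paths from Example~\ref{bare-ex} (which have $\ell(\lambda)$ sources and $|\lambda|$ vertices) yields $\psi(\bare_\lambda)=t\,u^{|\lambda|}S^{\ell(\lambda)-1}$.

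Finally I would evaluate $\psi(\bX_G)$ two ways and disentangle. Via the identity above and continuity, $\psi(\bX_G)=\sum_{D\in\MAO(G)} t\,u^{k(D)}S^{m(D)-1}=t\sum_{j,k}N_{j,k}\,u^{k}S^{\,j-1}$, while via the hypothesised $\bare$-expansion, $\psi(\bX_G)=t\sum_{j,k}\bigl(\sum_{\ell(\lambda)=j,\,|\lambda|=k}c_\lambda\bigr)u^{k}S^{\,j-1}$. Cancelling $t$ and using that $\KK \supseteq \QQ$ (assumed in this subsection), the substitution $v=S=\tfrac{1+u}{2}t+\tfrac{1-u}{2}$ is an invertible change of variable, since $\tfrac{1+u}{2}$ is a unit of $\KK\llbracket u\rrbracket$; it carries both sides into $\KK\llbracket u,v\rrbracket$, where $\{u^{k}v^{\,j-1}\}$ are linearly independent monomials. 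Comparing coefficients gives $N_{j,k}=\sum_{\ell(\lambda)=j,\,|\lambda|=k}c_\lambda$, which is a nonnegative integer because the left side is. The main obstacle is concentrated in the bivariate computation of $\psi(\bGamma(D))$ together with the conceptual point that the source statistic and the vertex statistic can only be separated at the very end, through the independence of $\{u^{k}S^{\,j-1}\}$, rather than by any vertex-count-graded decomposition of $\bX_G$ (indeed the vertex-count pieces $\sum_{k(D)=k}\bGamma(D)$ are not even symmetric).
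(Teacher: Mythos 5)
Your proof is correct, and it follows the same basic strategy as the paper's: both start from the identity $\bX_G=\sum_{D\in\MAO(G)}\bGamma(D)$ of Corollary~\ref{mf-p-cor} and apply the source-counting functional $\varphi$ of Lemma~\ref{m-lem} to the two expansions of $\bX_G$. The genuine difference is your second variable. The paper applies $\varphi$ alone and finishes by ``taking the coefficients of $t^j$ in both formulas,'' which as written only matches $\sum_k N_{j,k}$ against $\sum_{\ell(\lambda)=j}c_\lambda$ and moreover runs into exactly the convergence problem you point out, since $\varphi(\bL_{n,[n-1]\setminus[i]})$ does not depend on $n$ and so infinitely many pseudobasis elements feed into each power of $t$. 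Your bivariate functional $\psi(\bL_{n,S})=u^n\varphi(\bL_{n,S})$ repairs both defects at once: the repeated-source subset of size $r$ in the proof of Lemma~\ref{m-lem} does contribute $u^{k(D)+r}$, the identity $\sum_r\binom{i}{r}u^r=(1+u)^i$ turns Stanley's binomial sum into $t\,u^{k(D)}\bigl(1+\tfrac{(t-1)(1+u)}{2}\bigr)^{m(D)-1}=t\,u^{k(D)}S^{m(D)-1}$ as you claim, and the divisibility of $\psi(\bL_{n,S})$ by $u^n$ makes every coefficient of $t^au^b$ a finite sum. So your argument actually delivers the bigraded statement that the proposition asserts, where the paper's one-variable computation leaves the separation of the vertex-count statistic to the reader. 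The only loose step is the closing ``invertible change of variable $v=S$'': since $S$ has nonzero constant term, $v\mapsto S$ is not a continuous substitution into $\KK\llbracket t,u\rrbracket$, so you should instead verify directly that the family $\{u^kS^{j-1}\}$ is linearly independent under the given convergence constraints. This is easy: if $\sum_{j,k}a_{j,k}u^kS^{j-1}=0$ with only finitely many $j$ giving $a_{j,k}\neq 0$ for each $k$, then reducing modulo $u^{k+1}$ and inducting on $k$ leaves the finite relation $\sum_j a_{j,k}(\tfrac{t+1}{2})^{j-1}=0$ in $\KK[t]$, whose terms have distinct degrees since $\QQ\subseteq\KK$; hence every $a_{j,k}=0$. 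With that justification substituted for the change-of-variable sentence, your proof is complete.
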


\begin{proof}
Let $\source(G,j,k)$ be the number of acyclic multi-orientations of $G$ with exactly $j$ sources and $k$ vertices.
Applying $\varphi$ to $\bX_G = \sum_{D \in \MAO(G)}   \bGamma(D)$ using  Lemma~\ref{m-lem}
gives $\varphi(\bX_G) = \sum_{D \in \MAO(G)} \source(G,j,k) t^j$.
On the other hand, $\bare_\lambda = \bGamma(D)$ where $D$ is the disjoint union of directed chains of 
sizes $\lambda_1,\lambda_2,\dots$, so $\varphi(\bare_\lambda) = t^{\ell(\lambda)}$.
Therefore we also have $\varphi(\bX_G) = \sum_{\lambda} c_\lambda  t^{\ell(\lambda)}$.
The result follows by taking the coefficients of $t^j$ in both formulas.
\end{proof}

As noted in \cite{CPS},  the coefficients $c_\lambda$ appearing in $\bX_G = \sum_{\lambda} c_\lambda \bare_\lambda$
can be negative, even  when $G$ in the incomparability graph of a $(3+1)$-free poset.

\begin{example}
Fix $n>0$. Then
every $D \in \MAO(K_n)$ is an acyclic orientation of a nonempty complete graph, so has a single source vertex. Consulting Proposition~\ref{bXk-prop},
we see that the number of $k$-vertex acyclic multi-orientations of the complete graph $K_n$ is zero if $k<n$ and $n! {k -1\brace n-1}$ if $k\geq n$.
For $n=2$ this number is always $2$, which matches the description of $\MAO(K_2)$ in Example~\ref{MAO-ex}.
\end{example}

\subsection{Diagrams and isomorphisms}

For each directed acyclic graph $D$ and labeled poset $P = (D,\gamma)$,
 define 
\be\label{blck-eq}\textstyle
\bDelta(D) = \sum  D|_S \otimes D|_T
\quand
\bDelta(P) = \sum  (D|_S,\gamma|_S) \otimes (D|_T,\gamma|_T),
\ee
where both sums are over all (not necessarily disjoint) vertex decompositions $S\cup T =  V(D)$ in which $S$ is a lower set, $T$ is an upper set, and $S\cap T$ is an antichain.

Use the continuous linear extensions of the operations \eqref{blck-eq}
to replace the coproducts in the completions of  $\cO$ and $\cP$,
and denote the resulting structures as $ \mO$ and $ \mP$.
Also let $\wzeta_\cO$ and $\wzeta_\cP$ 
be the continuous linear maps to $\KK\llbracket t\rrbracket$
extending 
$\zeta_\cO$ and $\zeta_\cP$
as in \eqref{ozeta-eq}.

\begin{theorem}\label{diagram2-thm}
Each of the pairs 
\[(\mG,\wzeta_\cG), \quad (\mO,\wzeta_\cO), \quand (\mP,\wzeta_\cP) \] is a combinatorial LC-Hopf algebra,
and there is a commutative diagram 
of morphisms of combinatorial LC-Hopf algebras
{\[
\begin{tikzcd}
(\mG,\wzeta_\cG) \arrow[r,hook] \arrow[dr] & (\mO,\wzeta_\cO) \arrow[r,hook] \arrow[d]  & (\mP,\wzeta_\cP) \arrow[dl] \\
  & (\mQSym,\wzetaq)  &  
  \end{tikzcd}
\]}%
in which the horizontal maps send graphs and DAGs to
\[\textstyle G\mapsto \sum_{D \in \MAO(G)} D\quand D \mapsto (D,\gamma^\op),\]
and the $\mQSym$-valued maps send $G \mapsto \bX_G$,
$D \mapsto \bGamma(D)$,
and $(D,\gamma) \mapsto \bGamma(D,\gamma)$.
\end{theorem}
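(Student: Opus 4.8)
The plan is to treat the commutative diagram as a formal consequence of the universal property in Theorem~\ref{lc-thm}, so that the real content splits into three tasks: (1) verifying that $(\mO,\wzeta_\cO)$ and $(\mP,\wzeta_\cP)$ are combinatorial LC-Hopf algebras (the case of $(\mG,\wzeta_\cG)$ being Proposition~\ref{cG-lc-prop}); (2) showing that the two horizontal maps are morphisms of combinatorial LC-Hopf algebras; and (3) identifying the three diagonal maps with the claimed power series. Once (1)--(3) are in place the commutativity is automatic: each diagonal map is \emph{the} unique morphism to $(\mQSym,\wzetaq)$ afforded by Theorem~\ref{lc-thm}, and a composite of two morphisms is again a morphism, so the lower triangles must agree with the diagonals by uniqueness. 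Since $G\mapsto \bX_G$ on $\mG$ is already Proposition~\ref{cG-lc-prop}, only the $\mO$ and $\mP$ diagonals remain to be computed.

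For (1) I would follow the template of the proof of Proposition~\ref{wzetaw-prop}. Grading $\cO$ and $\cP$ by number of vertices makes the disjoint-union product a graded, connected algebra of finite graded dimension. Since every decomposition $S\cup T = V(D)$ satisfies $|S|+|T| = |V(D)| + |S\cap T| \ge |V(D)|$, the coproduct $\bDelta$ is filtered as a coalgebra in the sense required by Lemma~\ref{lc-ext-lem}, and one checks coassociativity and the bialgebra compatibility directly. The only delicate point is that the ``$S\cap T$ an antichain'' requirement be stable under iteration, so that $(\bDelta\otimes\id)\bDelta$ and $(\id\otimes\bDelta)\bDelta$ produce the same expansion over triples $V = S_1\cup S_2\cup S_3$ in which the partial unions $S_1,\,S_1\cup S_2$ are lower sets, $S_2\cup S_3,\,S_3$ are upper sets, and consecutive intersections are antichains. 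Lemma~\ref{lc-ext-lem} then yields the LC-Hopf algebra structures, and since $\wzeta_\cO,\wzeta_\cP$ are continuous algebra homomorphisms to $\KK\llbracket t\rrbracket$ whose specialization at $t=0$ recovers the counit, both pairs are combinatorial LC-Hopf algebras.

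For (3) the key computation is $\Psi_{\mP,\wzeta_\cP}((D,\gamma)) = \bGamma(D,\gamma)$. By Theorem~\ref{lc-thm}, $(\wzeta_\cP)_\alpha((D,\gamma))$ counts the $k$-fold decompositions $V(D) = S_1\cup\cdots\cup S_k$ appearing in $\bDelta^{(k-1)}$ with $|S_i| = \alpha_i$ for which $\gamma$ increases along every edge of each $D|_{S_i}$ (precisely when $\wzeta_\cP(D|_{S_i},\gamma|_{S_i}) = t^{|S_i|}$ rather than $0$). On the other side I would group the set-valued $P$-partitions $\kappa$ defining $\bGamma(D,\gamma)$ by their level sets $S_j := \{v : j\in\kappa(v)\}$: then $x^\kappa = \prod_j x_j^{|S_j|}$, condition (a) for increasing edges permits two endpoints to share a level while condition (b) for decreasing edges forbids it, so each $D|_{S_j}$ carries only $\gamma$-increasing edges, and the lower/upper/antichain structure of the level sequence matches the iterated coproduct. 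This identifies the two sums term by term. The $\mO$ diagonal then follows from $\bGamma(D) = \bGamma(D,\gamma^\op)$ together with the morphism $\mO\to\mP$ of task (2) and uniqueness.

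Task (2) is where I expect the main obstacle. The map $\mO\to\mP$, $D\mapsto(D,\gamma^\op)$, is comparatively easy: any decreasing labeling gives the same class of labeled poset, restrictions of $\gamma^\op$ stay decreasing so the coproducts agree term by term, and $\wzeta_\cP(D,\gamma^\op) = 0^{|E(D)|}t^{|D|} = \wzeta_\cO(D)$. The hard part is that $\Phi\colon G\mapsto \sum_{D\in\MAO(G)}D$ is a coalgebra morphism, i.e.\ $\bDelta\circ\Phi = (\Phi\htimes\Phi)\circ\bDelta$. Here I would set up a bijection between pairs consisting of $D\in\MAO(G)$ of some type $\alpha$ together with a decomposition $V(D) = A\cup B$ ($A$ lower, $B$ upper, $A\cap B$ an antichain), and data consisting of a vertex decomposition $S\cup T = V(G)$ with multi-orientations of $G|_S$ and $G|_T$. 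The mechanism is Proposition~\ref{MAO-prop}: each clan $\{(v,1),\dots,(v,\alpha(v))\}$ is totally ordered under $D$, so a lower set meets it in a top index-interval and an upper set in a bottom index-interval, and the antichain condition forces these to overlap in at most one index, which is exactly the datum recording whether $v$ lands in $S$ only, $T$ only, or both. Verifying that restriction yields genuine multi-orientations of $G|_S$ and $G|_T$, and conversely that gluing two multi-orientations along $S\cap T$ (with the cross-edges between the $S$-only and $T$-only clans forced to point from $A$ to $B$) reconstructs a unique $D\in\MAO(G)$ while preserving the ``two directed paths'' condition of Proposition~\ref{MAO-prop}, is the crux of the argument; the algebra-morphism and $\wzeta$-compatibility properties of $\Phi$ are routine by comparison, since disjoint unions of clan graphs factor and $\wzeta_\cO(\Phi(G)) = 0^{|E(G)|}t^{|G|} = \wzeta_\cG(G)$ because any multi-orientation of a graph with an edge has an edge.
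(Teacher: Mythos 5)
Your architecture is the same as the paper's: reduce the commutativity of the diagram to the universal property of $(\mQSym,\wzetaq)$ from Theorem~\ref{lc-thm}, handle $(\mP,\wzeta_\cP)$ and the $\mP\to\mQSym$ diagonal separately (the paper cites \cite{LM2019} for both, while you sketch a direct verification via level sets of set-valued $P$-partitions --- a legitimate, more self-contained alternative), obtain $(\mO,\wzeta_\cO)$ from the inclusion into $\mP$, and isolate the coalgebra-morphism property of $\Phi\colon G\mapsto\sum_{D\in\MAO(G)}D$ as the real content. You have correctly located the crux. The problem is that you have not resolved it: the bijection between tuples $(S,T,D_S,D_T)$ and triples $(\tilde D,A,B)$ is asserted, with the gluing step explicitly deferred, and the deferred step is precisely where the only nontrivial idea of the proof lives.

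Concretely, the missing ingredient is the rule deciding, for each $v\in S\cap T$, whether the bottom vertex $(v,1)$ of the $D_S$-clan is \emph{identified} with the top vertex $(v,\alpha(v))$ of the $D_T$-clan (so that the glued clan contributes one element to $A\cap B$) or whether the two clans are stacked disjointly. Your phrase ``gluing two multi-orientations along $S\cap T$'' suggests the former always happens, but neither uniform choice works: both disjoint splits and one-point overlaps genuinely occur among the valid triples $(\tilde D,A,B)$, so always merging kills surjectivity, while never merging produces a junction edge $(v,i+1)\to(v,i)$ admitting only one directed path, so $\tilde D\notin\MAO(G)$. The paper's resolution is that the overlap is \emph{forced} by the data: one merges exactly when $(v,1)$ is a sink of $D_S$ and $(v,\alpha(v))$ is a source of $D_T$ (the quantity $d_v$ in the proof of Lemma~\ref{merge-lem}); this calibration is what simultaneously guarantees the two-directed-paths condition at the junction, the antichain condition on $A\cap B$, and invertibility of the map. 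A smaller inaccuracy: the cross-edges in the glued graph run from every vertex of $\tilde S\setminus\tilde T$ to every vertex of $\tilde T$ over adjacent or equal underlying vertices, not merely ``between the $S$-only and $T$-only clans.'' Until the merging rule is specified and checked, the coassociativity-compatibility identity \eqref{psi-psi-eq}$=$\eqref{psi-psi-eq2} remains unproved, so the argument has a genuine gap at its central step.
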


\begin{proof}
We observed in 
 Proposition~\ref{cG-lc-prop} that $(\mG,\wzeta_\cG)$ is a combinatorial LC-Hopf algebra
 and that the bottom left map sends $G \mapsto \bX_G$.
 
 For a proof that $ (\mP,\wzeta_\cP) $ is a combinatorial LC-Hopf algebra, see \cite[\S3.1]{LM2019}.
In \cite{LM2019}, the scalar ring for $\mP$ is $\ZZ[\beta]$ rather than a generic integral domain $\KK$,
and the definition of a labeled poset $(P,\gamma)$ assumes that $P$ is (the Hasse diagram of) a finite poset rather than any finite DAG,
but all arguments carry over unchanged to our slightly more general setting.
With the same caveats, the claim that the given map $ \mP \to \mQSym$ is the unique morphism $(\mP,\wzeta_\cP) \to (\mQSym,\wzetaq) $
is \cite[Thm.~3.8]{LM2019}.

The pair $(\mO,\wzeta_\cO)$ is a combinatorial LC-Hopf algebra as its structure maps are the unique ones compatible with the inclusion
$\mO \hookrightarrow \mP$ sending $D \mapsto (D,\gamma^\op)$.
The two right-most arrows  in the diagram are morphisms of combinatorial LC-Hopf algebras, and their composition gives the vertical arrow by definition.

It remains to show that linear map acting on graphs as $G\mapsto \sum_{D \in \MAO(G)} D$
is a morphism  $(\mG,\wzeta_\cG) \to (\mO,\wzeta_\cO) $. Denote this map by $\psi$. 
If a graph $G$ has at least one edge then so does every $D \in \MAO(G)$ and 
if $G$ has no edges then $\MAO(G) = \{G\}$.
In either case we have $\wzeta_\cO \circ \psi = \wzeta_\cG$.
Likewise, 
 $\psi$ commutes with the (co)unit maps of $\mG$ and $\mO$
since $G=\emptyset$ is empty if and only if $\MAO(G) = \{G\} = \{\emptyset\}$.
 
  Since $\MAO(G \sqcup H) = \{ D_1 \sqcup D_2 : (D_1, D_2) \in \MAO(G) \times \MAO(H)\}$ for two graphs $G$ and $H$, we also have $\psi \circ \nabla_{\mG} = \nabla_{\mO}\circ (\psi\htimes \psi)$. To check that $(\psi\htimes \psi) \circ \Delta_{\mG} = \Delta_{\mO}\circ \psi$, we must show for any graph $G$ that
  \be\label{psi-psi-eq}
  (\psi\htimes \psi) \circ \Delta_{\mG}(G) = \sum_{S\cup T = V(G)} \sum_{D_S \in \MAO(G|_S)} \sum_{D_T \in \MAO(G|_T)} D_S \otimes D_T
  \ee
  is equal to 
 \be\label{psi-psi-eq2}
 \Delta_{\mO}\circ \psi(G)  = \sum_{\tilde D \in \MAO(G)} \sum_{
 \tilde S \cup \tilde T = V(\tilde D)} \tilde D|_{\tilde S} \otimes \tilde D|_{\tilde T}
   \ee
    where in the last sum $\tilde S$ must be a lower set of $\tilde D$,
$   \tilde T$ must be an upper set, and $\tilde S \cap \tilde T$ must be an antichain.
For this, it suffices to construct a bijection from the set of tuples $(S,T,D_S,D_T)$
indexing the summands in the first expression
to the set of triples $(\tilde D, \tilde S ,\tilde T)$ indexing the summands in the second expression such that $D_S \cong \tilde D|_{\tilde S}$ and $D_T \cong \tilde D|_{\tilde T}$.
Such a bijection is constructed as follows.

Fix a graph $G$. Suppose $S$ and $T$ are (not necessarily disjoint)
 sets such that $S\cup T = V(G)$. Choose $D_S \in \MAO(G|_S)$ and $D_T \in \MAO(G|_T)$.
 
 Suppose $D_T$ has type $\alpha : T \to \PP$. Define $\alpha(v)=0$ for $v \in S\setminus T$.
 For each $v \in S$, let $d_v\in \NN$ be either
 $\alpha(v) -1$ if $v\in T$ and $(v,1)$ is a sink vertex in $D_S$ and $(v,\alpha(v))$ is a source vertex in $D_T$, or else $\alpha(v)$.
 Then 
set 
\[ \tilde S = \{ (v,i + d_v) : (v,i) \in V(D_S)\} \quand \tilde T = V(D_T).\]
Notice that $\tilde S \cap \tilde T$ is the set of pairs $(v,\alpha(v))$ for $v \in S\cap T$ with $d_v = \alpha(v)-1$,
so in $\tilde S \cup \tilde T$ the
 sink vertices in $D_S$  of the form $(v,1)$ are merged with the source vertices in $ D_T$ of the form $(v,\alpha(v))$.
 
Finally define $\tilde D$ to be the directed graph with vertex set $\tilde S \cup \tilde T$
 and edges 
 \be\label{edge-eqs}\ba 
  (u,i + d_u) &\to (v,j+d_v)&&\quad\text{if }(u,i) \to (v,j) \in E(D_S), \\
  (u,i) &\to (v,j)&&\quad\text{if }(u,i) \to (v,j) \in E(D_T), \\
   (u,i) &\to (v,j) &&\quad\text{if $(u,i) \in \tilde S\setminus \tilde T$, $(v,j) \in \tilde T$,}
   \\ &&&\quad\text{ and $u=v$ or $\{u,v\} \in E(G)$.}
   \ea
   \ee
   
   \begin{example}\label{STD-ex}
Let $G$ be the complete graph on two elements $v$ and $w$. 
Suppose $S=T=V(G)=\{v,w\}$.
Let $D_S \in \MAO(G|_S)$ be the leftmost directed graph in Figure~\ref{MAO-fig} and let $D_T\in \MAO(G|_T) $ be the rightmost directed graph in the same figure.
Writing $v_i$ and $w_i$ instead of $(v,i)$ and $(w,i)$ as in Figure~\ref{MAO-fig}, we have 
\[
\tilde S = \{ v_5,v_4,v_3, w_6,w_5,w_4\} 
\quand
\tilde T = \{v_3,v_2,v_1,w_4,w_3,w_3,w_1\}
  \]
  so $\tilde S \cap \tilde T = \{v_3, w_4\}$.
The directed graph $\tilde D$ on $\tilde S \cup \tilde T$ is shown in Figure~\ref{MAO-fig2}.
\end{example}

\begin{figure}[h]
\[\centerline{
    \begin{tikzpicture}[xscale=3, yscale=1,>=latex,inner sep=0.5mm]
      \node at (0,-1) (T4) {$\boxed{v_4}$};
      \node at (-.25,-2) (T5) {$\boxed{v_5}$};
      \node at (1,-0.5) (U5) {$\boxed{w_5}$};
      \node at (1.25,-1.5) (U6) {$\boxed{w_6}$};
      \node at (-0.25,2.5) (T1) {$v_1$};
      \node at (0,1.5) (T2) {$v_2$};
      \node at (-0.25,0.5) (T3) {$v_3$};
      \node at (1.5,3) (U1) {$w_1$};
      \node at (1,2) (U2) {$w_2$};
      \node at (1,1) (U3) {$w_3$};
      \node at (1.5,0) (U4) {$w_4$};
      \draw[->]  (T5) -- (T4);
      \draw[->]  (T4) -- (T3);
      \draw[->]  (U6) -- (U5);
      \draw[->]  (U5) -- (U4);
      \draw[->]  (T5) -- (U6);
      \draw[->]  (U6) -- (T4);
      \draw[->]  (T4) -- (U5);
      \draw[->]  (U5) -- (T3);
      \draw[->]  (T3) -- (U4);
      \draw[->]  (T5) -- (T3);
      \draw[->]  (U6) -- (U4);
      \draw[->]  (T3) -- (T2);
      \draw[->]  (T3) -- (T1);
      \draw[->]  (T2) -- (T1);
      \draw[->]  (U4) -- (U3);
      \draw[->]  (U4) -- (U2);
      \draw[->]  (U4) -- (U1);
      \draw[->]  (U3) -- (U2);
      \draw[->]  (U3) -- (U1);
      \draw[->]  (U2) -- (U1);
      \draw[->]  (U4) -- (T3);
      \draw[->]  (T3) -- (U3);
      \draw[->]  (U3) -- (T2);
      \draw[->]  (T2) -- (U2);
      \draw[->]  (U2) -- (T1);
      \draw[->]  (T1) -- (U1);
%
     \end{tikzpicture}
}\]
\caption{
The directed graph $\tilde D$ corresponding to Example~\ref{STD-ex}, with some edges omitted
and with vertices $(v,i)$ and $(w,j)$ written as $v_i$ and $w_j$.
The boxed vertices are the elements of  $\tilde S\setminus \tilde T$ while the unboxed vertices are the elements of $\tilde T$,
and $\tilde S \cap \tilde T = \{v_3,w_4\}$.
This picture does not show all of $\tilde D$, which has additional directed edges from each boxed vertex to each unboxed vertex.
}\label{MAO-fig2}
\end{figure}
   
 One can check as follows that  $\tilde S$ is a lower set in $\tilde D$ while $\tilde T$ is an upper set.
  First, notice
 that an edge of the first type in \eqref{edge-eqs} can only go from $\tilde T$ to $\tilde S$
 in the event that $(u,i)\to (v,j)$ is an edge in $D_S$ and $(u,j+d_u) \in \tilde S \cap \tilde T$. 
 However, this never occurs since
 we can only have $(u,j+d_u) \in \tilde S \cap \tilde T$
  when $j = 1$
 and $(u,1)$ is a sink vertex in $D_S$.

Similarly,
an edge of the second type  in \eqref{edge-eqs} can only go from $\tilde T$ to $\tilde S$
 in the event that $(u,i)\to (v,j)$ is an edge in $D_T$ and $(v,j) \in \tilde S \cap \tilde T$. 
 However, this never occurs since
 we can only have $(v,j) \in \tilde S \cap \tilde T$
  when $j = \alpha(v)$
 and $(v,\alpha(v))$ is a source vertex in $D_T$.
 
 As edges of the third type  in \eqref{edge-eqs} all start from $\tilde S\setminus \tilde T$,
  we conclude that 
 $\tilde S$ is a lower set in $\tilde D$ and $\tilde T$ is an upper set.
 
To finish our proof of Theorem~\ref{diagram2-thm}, it suffices to derive the following lemma.
 
\begin{lemma}\label{merge-lem}
The map 
\[(S,T,D_S,D_T) \mapsto (\tilde D, \tilde S, \tilde T) \]
is a bijection from the set of tuples $(S,T,D_S,D_T)$, in which 
$S$ and $T$ are sets with $S\cup T = V(G)$ and
 $D_S \in \MAO(G|_S)$ and $D_T \in \MAO(G|_T)$,
to the set of triples $(\tilde D, \tilde S, \tilde T)$, where $\tilde D \in \MAO(G)$
and 
where
$\tilde S$ and $\tilde T$ are respectively a lower set and an upper set in $\tilde D$
such that $\tilde S \cup \tilde T = V(\tilde D)$ and $\tilde S \cap \tilde T$
is an antichain.
This bijection satisfies $D_S \cong \tilde D|_{\tilde S}$ and $D_T \cong \tilde D|_{\tilde T}$.
\end{lemma}

\begin{proof}
We first check that $\tilde D \in \MAO(G)$. The directed graph $\tilde D$ is acyclic since $D_S$ and $D_T$ are acyclic.
   Fix $v \in V(G)$ and $i \in \PP$ such that $(v,i+1)$ and $(v,i)$ are both vertices in $\tilde D$.
   If these vertices are both in $\tilde S$ (respectively, $\tilde T$),
   then any two directed paths in $D_S$ (respectively, $D_T$)  from $(v,i+1)$ and $(v,i)$
   lift to paths in $\tilde D$.

Suppose  $(v,i+1) \in \tilde S$ and $(v,i) \in \tilde T$.
Then $i = \alpha(v)$ and either $(v,i+1)$ is not a sink in $\tilde D|_{\tilde S} \cong D_S$ or $(v,i)$ is not a source
in $\tilde D|_{\tilde T} = D_T$.
In the first case $\tilde D$ must have an edge of the form $(v,i+1) \to (w,j) \in \tilde T$ where $v\neq w$,
and in the second $\tilde D$ must have an edge of the form $(v,i) \leftarrow (w,j) \in \tilde S$ where $v\neq w$. Either way, we  
have $\{v,w\} \in E(G)$, so $(v,i+1)\to(w,j)\to(v,i)$ is a second directed path in $\tilde D$
besides $(v,i+1)\to (v,i)$. We conclude that $\tilde D \in \MAO(G)$.

We have already seen that $\tilde S$ is a lower set in $\tilde D$ and $\tilde T$ is an upper set
with $\tilde S \cup \tilde T = V(\tilde D)$.
No single edge of $\tilde D$ may connect two vertices in $\tilde S \cap \tilde T$,
since then the vertices could not both be sources in $D_T$. This implies that no directed path in $\tilde D$ may connect two vertices in $\tilde S \cap \tilde T$, since 
 every vertex in this path would have to be in $\tilde S\cap \tilde T$ since 
 $\tilde S$ is a lower set in $\tilde D$ and $\tilde T$ is an upper set.
 Thus $\tilde S \cap \tilde T$ is an antichain in $\tilde D$.
 
 We conclude that every image of the given operation is in the described codomain.
 This operation is a bijection as it is straightforward to construct its inverse.
 Namely, given $ (\tilde D, \tilde S, \tilde T) $
 we first recover $D_T$ and   $\alpha$    as 
 \[ D_T = \tilde D|_{\tilde T}\quand \alpha: v\mapsto \max\{i : (v,i) \in \tilde T\}.\]
 Then we
 recover  $D_S$ from the (isomorphic) induced subgraph $ \tilde D|_{\tilde S}$ by replacing each vertex $(v,i)$ by $(v,i+1-\alpha(v))$,
 and finally we obtain $S$ and $T$  as 
 \[ S = \{ v: (v,i) \in V(D_S)\} \quand T = \{ v: (v,i) \in V(D_T)\}.\]
 We have $S\cup T= \{ v : (v,i) \in V(\tilde D)\} = V(G)$.
 Since 
 $D_S$ and $D_T$ are formed by restricting $\tilde D \in \MAO(G)$ to a lower set or an upper set,
it holds that $D_S \in \MAO(G|_S)$ and $D_T \in \MAO(G|_T)$ as needed.
\end{proof}

The proof of Lemma~\ref{merge-lem} completes the proof of Theorem~\ref{diagram2-thm}.
\end{proof}

Outside the special case when $\omega : V(G) \to \{1\}$,
the kromatic symmetric function $\bX_{(G,\omega)}$ of a weighted graph is not always multifundamental positive,
nor is its expansion into $\bL_\alpha$'s finite.
However, 
since the diagram in Theorem~\ref{diagram2-thm} commutes, \eqref{ML-eq} implies that:

\begin{corollary}  \label{mf-p-cor}
If $G$ is a graph then $\bX_G= \sum_{D \in \MAO(G)}   \bGamma(D)$ and consequently this 
symmetric power series is multifundamental positive.
\end{corollary}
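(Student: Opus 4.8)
The plan is to obtain both assertions essentially for free from the commutative diagram in Theorem~\ref{diagram2-thm}, so that the corollary reduces to a short bookkeeping argument rather than a new computation. First I would unwind the commutativity of the left triangle of that diagram. The diagonal morphism $(\mG,\wzeta_\cG) \to (\mQSym,\wzetaq)$ sends $G \mapsto \bX_G$, while the composite through $(\mO,\wzeta_\cO)$ first applies the horizontal map $G \mapsto \sum_{D \in \MAO(G)} D$ and then the morphism $D \mapsto \bGamma(D)$. Equating these two agreeing morphisms on the generator $G$ yields the identity $\bX_G = \sum_{D \in \MAO(G)} \bGamma(D)$ at once.

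The single point that needs care is that, by Example~\ref{MAO-ex}, the indexing set $\MAO(G)$ is infinite as soon as $G$ has an edge, so the displayed sum is genuinely an infinite one. I would stress that this is harmless precisely because everything takes place in the LC-setting of Section~\ref{completions-sect}: the element $\sum_{D \in \MAO(G)} D$ is a bona fide element of the completion $\mO$, and the morphism $\mO \to \mQSym$ is continuous, hence commutes with the relevant pseudobasis-indexed sum. No ad hoc convergence estimate is required beyond invoking this continuity, which is exactly what the linearly compact formalism was designed to supply.

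For multifundamental positivity I would feed each summand $\bGamma(D)$ through equation \eqref{ML-eq}. Unwinding the definition $\bGamma(D) = \bGamma(D,\gamma^\op)$ for any fixed decreasing labeling $\gamma^\op$, that identity reads $\bGamma(D) = \sum_{w \in \mL(D)} \bL_{\ell(w),\,\Des(w,\gamma^\op)}$, a nonnegative sum of functions of the form $\bL_{n,S}$. Each such $\bL_{n,S}$ is the multifundamental function $\bL_\alpha$ attached to the composition $\alpha$ determined by $S = I(\alpha)$, so $\bGamma(D)$ is multifundamental positive for every $D$. Summing over $D \in \MAO(G)$ then presents $\bX_G$ as a possibly infinite sum of (not necessarily distinct) $\bL_\alpha$'s, which is precisely the definition of multifundamental positivity; the fact that $\bX_G \in \mSym$ is symmetric follows separately from the cocommutativity of $\mG$ via the Remark after Theorem~\ref{lc-thm}.

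Because all of the structural work---most notably the merging bijection of Lemma~\ref{merge-lem} that makes $G \mapsto \sum_{D \in \MAO(G)} D$ into a coalgebra morphism---is already contained in Theorem~\ref{diagram2-thm}, I do not expect any genuinely hard step in the corollary itself. The only mild subtlety is the translation between the two parametrizations $\bL_{n,S}$ and $\bL_\alpha$, but this is a routine passage through the map $\alpha \mapsto I(\alpha)$ and does not affect positivity.
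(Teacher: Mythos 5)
Your proposal is correct and follows the paper's own derivation exactly: the identity $\bX_G = \sum_{D \in \MAO(G)} \bGamma(D)$ is read off from the commutativity of the left triangle in Theorem~\ref{diagram2-thm}, and multifundamental positivity then follows by expanding each $\bGamma(D)$ via \eqref{ML-eq}. Your additional remarks on continuity in the linearly compact setting and the translation $\bL_{n,S} \leftrightarrow \bL_\alpha$ are accurate but not points the paper felt the need to spell out.
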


Theorem~\ref{diagram2-thm} also leads to a coproduct formula for $\bare_n$ from Example~\ref{bare-ex}.
 
 \begin{corollary}
 If $n$ is a positive integer then 
 \[\Delta(\bare_n) = \sum_{\substack{ i,j \in \NN \\ i+j = n}} \bare_i \otimes \bare_j  + \sum_{\substack{ i,j \in \PP \\ i+j = n+1}} \bare_i \otimes \bare_j.\]
 \end{corollary}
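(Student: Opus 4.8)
The plan is to deduce the formula directly from Theorem~\ref{diagram2-thm}. Recall from Example~\ref{bare-ex} that $\bare_n = \bGamma(D)$ for the directed chain $D = (1 \to 2 \to \dots \to n)$, and that the vertical map $(\mO,\wzeta_\cO) \to (\mQSym,\wzetaq)$ sending $D \mapsto \bGamma(D)$ is a morphism of combinatorial LC-Hopf algebras. Such a morphism commutes with the coproducts, so $\Delta(\bare_n)$ is the image under $\bGamma \htimes \bGamma$ of the coproduct $\bDelta(D)$ computed in $\mO$ via \eqref{blck-eq}. Concretely,
\[
\Delta(\bare_n) = \sum \bGamma(D|_S) \otimes \bGamma(D|_T),
\]
where the sum is over all decompositions $S \cup T = \{1,\dots,n\}$ with $S$ a lower set, $T$ an upper set, and $S \cap T$ an antichain in $D$. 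So it remains to enumerate these decompositions and identify each tensor factor.

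Next I would classify the admissible pairs $(S,T)$. In the chain $D$ there is a directed path from $i$ to $j$ precisely when $i \leq j$, so the lower sets are the initial segments $S = \{1,\dots,a\}$ with $0 \leq a \leq n$, and the upper sets are the final segments $T = \{b,\dots,n\}$ with $1 \leq b \leq n+1$ (allowing the empty set at the extremes). The condition $S \cup T = \{1,\dots,n\}$ forces $b \leq a+1$, and since any two distinct vertices of a chain are comparable, the antichain condition on $S \cap T = \{b,\dots,a\}$ forces $|S \cap T| \leq 1$, i.e.\ $a \leq b$. Together these leave exactly two possibilities: either $b = a+1$, so that $S$ and $T$ are disjoint, or $b = a$, so that $S \cap T = \{a\}$ is a single vertex (which requires $1 \leq a \leq n$).

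Finally I would read off the contributions. Each induced subgraph $D|_S$ and $D|_T$ is again a directed chain, and a chain on $m$ vertices contributes $\bGamma = \bare_m$, with $\bare_0 = 1$. In the disjoint case $b = a+1$ the factors are $\bare_a \otimes \bare_{n-a}$ for $a = 0,1,\dots,n$, producing $\sum_{i+j=n}\bare_i \otimes \bare_j$ over $i,j \in \NN$. In the overlapping case $b = a$ the vertex $a$ is shared, so $D|_S$ has $a$ vertices and $D|_T$ has $n-a+1$ vertices, giving $\bare_a \otimes \bare_{n-a+1}$ for $a = 1,\dots,n$; here both factors are indexed by positive integers summing to $n+1$, producing $\sum_{i+j=n+1}\bare_i \otimes \bare_j$ over $i,j \in \PP$. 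Adding the two cases yields the claimed identity. There is no genuine obstacle here: the one point requiring care is the antichain condition, which is exactly what permits a single shared vertex and thereby accounts for the second (``$K$-theoretic'') sum that is absent from the classical coproduct of $e_n$.
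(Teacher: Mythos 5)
Your proposal is correct and follows essentially the same route as the paper's proof: both invoke Theorem~\ref{diagram2-thm} to transport the coproduct of the $n$-element chain $D$ in $\mO$ to $\Delta(\bGamma(D))$, then enumerate the decompositions $S\cup T=V(D)$ with $S$ a lower set, $T$ an upper set, and $S\cap T$ an antichain, which in a chain forces $|S\cap T|\leq 1$ and yields exactly the disjoint and one-vertex-overlap cases. Your classification via the inequalities $a\leq b\leq a+1$ is just a slightly more explicit phrasing of the same case analysis.
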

 
 \begin{proof}
 Let $D = (1 \to 2 \to 3 \to \dots \to n)$ be an $n$-element path.
 By Theorem~\ref{diagram2-thm} 
 we have $\Delta(\bare_n) = \Delta(\bGamma(D)) = \sum_{S\cup T= V(D)} \bGamma(D|_S) \otimes \bGamma(D|_T)$
 where the sum is over all lower sets $S$ and upper sets $T$ with $S\cap T$ an antichain in $D$.
 To be an antichain, the intersection $S\cap T$ must either be empty or the singleton set $\{i\}$ for some $i \in [n]$.

Either way, the choices for $S$ and $T$ are limited.
If $S\cap T$ is empty then we must have
 $S = [i]$ and $T =[n]\setminus [i]$ for some $0 \leq i \leq n$, and then $\bGamma(D|_S)\otimes \bGamma(D|_T)  = \bare_i\otimes \bare_{n-i}$.
If $S\cap T = \{i\}$ then we must have $S = [i]$ and $T = [n]\setminus[i-1]$,
 and then $\bGamma(D|_S) \otimes \bGamma(D|_T)  = \bare_i \otimes \bare_{n+1-i}$.
 Adding up these terms gives the result.
 \end{proof}

Fix a directed acyclic graph $D$. When $\alpha : V(D) \to \PP$ is any map,
define $\oCl_\alpha(D)$ to be the
directed acyclic graph with vertices $\cCl_\alpha(V(D))$ and  edges 
\[(v,i)\to (w,j)\quad\text{whenever}\quad\text{$v\to w \in E(D)$ or 
 both $v=w$ and $i> j$.}\]
When $\gamma : V(D) \to \ZZ$ is injective, let $\tilde \gamma$ be any labeling of $\oCl_\alpha(D)$ with
 \[
 \tilde \gamma(v, i)  < \tilde \gamma(w, j)\quad\text{if and only if}\quad\text{$\gamma(v) < \gamma(w)$ or 
both $v=w$ and $i<j$},
\]
and then
define $\oCl_\alpha(D,\gamma) = (\oCl_\alpha(D), \tilde \gamma)$.

Recall that for each   $\cH \in \{ \cG,\cO,\cP\}$
we use the symbols $\fkm \cH$ and $\overline{\cH}$ to denote the same sets with different LC-Hopf algebra structures.
Let 
\[
\ba 
\Psi_{\cG} &: \mG \to \wG,
\\
\Psi_{\cO} &: \mO \to \wO,\text{ and }
\\
\Psi_{\cP} &: \mP \to \wP
\ea
\]
be the continuous linear maps sending 
\be
G\mapsto \sum_{\alpha} \tfrac{1}{\alpha!} \Cl_\alpha(G),
\quad
D\mapsto \sum_{\alpha} \oCl_\alpha(D),
\quad
(D,\gamma) \mapsto \sum_{\alpha }   \oCl_\alpha(D,\gamma)
\ee
for each graph $G$, directed acyclic graph $D$, and labeled poset $(D,\gamma)$,
where the sums are over all maps $\alpha : V \to \PP$ for $V=V(G)$ or $V(D)$ as appropriate.
The map $\Psi_{\cG}$ is only defined if $\QQ \subseteq \KK$, which we assume in the following theorem.

\begin{theorem} 
When defined, the maps $\Psi_{\cG}$, $\Psi_{\cO}$, and $\Psi_{\cP}$ are isomorphisms 
of LC-Hopf algebras, and 
there is a commutative diagram %
{\[
\begin{tikzcd}
(\mG,\wzeta_\cG) \arrow[r,hook] \arrow[d,"\Psi_\cG"] & (\mO,\wzeta_\cO) \arrow[r,hook] \arrow[d,"\Psi_\cO"]  & (\mP,\wzeta_\cP) \arrow[d,"\Psi_\cP"] \\
 (\wG,\wzeta_\cG) \arrow[r,hook] & (\wO,\wzeta_\cO)  \arrow[r,hook]&  (\wP,\wzeta_\cP)
  \end{tikzcd}
\]}%
of combinatorial LC-Hopf algebra morphisms, in which the 
 horizontal arrows extend the maps in the diagrams in Proposition~\ref{diagram1-prop} and Theorem~\ref{diagram2-thm}.
 \end{theorem}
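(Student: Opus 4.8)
The plan is to treat each vertical map as an endomorphism of a single underlying LC-module and to isolate the one nontrivial algebraic point (coproduct compatibility) and the one nontrivial combinatorial point (commutativity of the left square). Since $\fkm\cH$ and $\overline{\cH}$ share the same underlying linearly compact module for each $\cH\in\{\cG,\cO,\cP\}$, each $\Psi_\cH$ is a continuous endomorphism of the completion of $\cH$. On a generator it has the form $\Psi_\cH(x)=x+(\text{terms with strictly more vertices})$, since the summand indexed by the constant map $\alpha\equiv 1$ is $\oCl_1(x)=x$ (or $\tfrac{1}{1!}\Cl_1(G)=G$) while every other $\alpha$ yields $\sum_v\alpha(v)>|V|$ vertices. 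With respect to the vertex-count filtration this is unitriangular, so $\Psi_\cH$ is invertible as an LC-module map, with inverse obtained by a convergent inclusion--exclusion; continuity and well-definedness of the infinite sums hold because fixing the vertex count of a target generator bounds $|\alpha|$. Compatibility with the unit, counit, and product is routine: the empty object is fixed, an edgeless or empty image is detected identically on both sides, and $\Cl_\alpha(G\sqcup H)=\Cl_{\alpha}(G)\sqcup\Cl_\alpha(H)$ (likewise for $\oCl$) gives multiplicativity. Together with the coproduct compatibility established next, this makes each $\Psi_\cH$ a morphism of LC-bialgebras, whence it automatically commutes with antipodes.

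The crux of the Hopf-algebra statement is that each $\Psi_\cH$ intertwines the multi-coproduct $\bDelta$ on $\fkm\cH$ with the ordinary graded coproduct on $\overline{\cH}$, i.e.\ $\Delta\circ\Psi_\cH=(\Psi_\cH\htimes\Psi_\cH)\circ\bDelta$. I would prove this by a direct bijection on generators. Applying the ordinary coproduct to $\oCl_\alpha(D,\gamma)$ splits it into a disjoint lower set $\tilde S$ and upper set $\tilde T$; because the clone of a vertex $v$ is the chain $(v,\alpha(v))\to\cdots\to(v,1)$, such a split must cut each clone at a single index, sending the top block to $\tilde S$ and the bottom block to $\tilde T$. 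Writing $\beta_S(v)$ and $\beta_T(v)=\alpha(v)-\beta_S(v)$ for the two block sizes and $S,T$ for their supports, one gets $\oCl_\alpha(D,\gamma)|_{\tilde S}\cong\oCl_{\beta_S}(D|_S,\gamma|_S)$ and similarly for $\tilde T$. A vertex lies in $S\cap T$ exactly when its clone is split between the two parts, and the prohibition of backward ($\tilde T\to\tilde S$) edges forces $S\cap T$ to be an antichain in $D$, which is precisely the overlap condition defining $\bDelta$. The identical argument (dropping labels) handles $\Psi_\cO$. For $\Psi_\cG$ the clones inside $\Cl_\alpha(G)$ are unordered cliques, so a disjoint split chooses an arbitrary $\beta_S(v)$-subset, contributing a factor $\prod_v\binom{\alpha(v)}{\beta_S(v)}$; the weight $\tfrac{1}{\alpha!}$ collapses this to $\tfrac{1}{\beta_S!\,\beta_T!}$, matching the unlabeled multi-coproduct of Remark~\ref{clan-rmk}. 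This is exactly why the factor $\tfrac{1}{\alpha!}$ (and the hypothesis $\QQ\subseteq\KK$) appears only for $\Psi_\cG$.

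For the diagram, the right square commutes almost by definition: a decreasing labeling $\gamma^\op$ of $D$ extends to a decreasing labeling of $\oCl_\alpha(D)$, so $\oCl_\alpha(D,\gamma^\op)=(\oCl_\alpha(D),\tilde\gamma)$ is exactly the image of $\oCl_\alpha(D)$ under the inclusion $\wO\hookrightarrow\wP$, matching $\Psi_\cP\circ(\mO\hookrightarrow\mP)$ termwise. The left square is the main obstacle. Unwinding the horizontal maps of Proposition~\ref{diagram1-prop} and Theorem~\ref{diagram2-thm}, its commutativity is the identity in $\wO$
\[
\sum_{\alpha}\tfrac{1}{\alpha!}\sum_{D'\in\AO(\Cl_\alpha(G))}D'
=\sum_{D\in\MAO(G)}\ \sum_{\beta}\oCl_\beta(D).
\]
The plan is to standardize clone orders on the left: an acyclic orientation of the clone-clique at $v$ is a linear order on its $\alpha(v)$ elements, and relabeling the clones into that order (there are $\alpha!$ such relabelings, absorbing $\tfrac{1}{\alpha!}$) produces an acyclic orientation of $\Cl_\alpha(G)$ in which every clone-edge points from higher to lower index. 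I would then show each such decreasing-clone DAG factors uniquely as $\oCl_\beta(D)$ for a single $D\in\MAO(G)$: by Proposition~\ref{MAO-prop}, the indices $i$ admitting a second $(v,i+1)\to(v,i)$ path avoiding clone-edges mark the essential clones recorded by $D$, while the remaining indices are the extra clones recorded by $\beta$. This factorization, together with the merging and unmerging of sink and source clones, is exactly the bijection already built in the proof of Lemma~\ref{merge-lem}, so reusing it yields the displayed identity.

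I expect the left-square identity to be the genuine difficulty, since it must simultaneously trade the clone-permutation symmetry of the unoriented graphs $\Cl_\alpha(G)$ for the linear clone-order of DAGs (the source of the $\tfrac{1}{\alpha!}$) and separate essential cloning (tracked by $\MAO$ via the second-path condition) from inessential cloning (tracked by $\beta$). Once it is in hand, the remaining assertions follow formally: $\Psi_\cP$ is an LC-Hopf isomorphism by the first two paragraphs, the right square identifies $\Psi_\cO$ with the restriction of $\Psi_\cP$ to the sub-LC-Hopf-algebra $\mO$ (hence an isomorphism onto $\wO$), and the left square likewise identifies $\Psi_\cG$ with a restriction of $\Psi_\cO$, making it an LC-Hopf isomorphism as well.
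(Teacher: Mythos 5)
Your proposal is correct and follows the same outline as the paper's proof: unitriangularity in the vertex-count filtration for invertibility, routine checks for the (co)unit, product, and $\zeta$-compatibility, and a reduction of the remaining content to the coproduct identities for $\Psi_\cG$, $\Psi_\cP$ and the left-square identity --- which the paper records as identities (a)--(c) and omits as ``straightforward, if slightly cumbersome,'' whereas you actually supply the clone-splitting bijection (with the $\binom{\alpha(v)}{\beta_S(v)}$ count absorbing $\tfrac{1}{\alpha!}$) and the essential/inessential-clone factorization that verify them. One small inaccuracy: the left-square factorization is not literally the bijection of Lemma~\ref{merge-lem} (which concerns coproduct compatibility of $G\mapsto\sum_{D\in\MAO(G)}D$), but your own description via the second-path criterion of Proposition~\ref{MAO-prop} is self-contained and correct, so nothing is lost.
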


\begin{proof}
The vertical maps are invertible since each has a unitriangular matrix in the standard pseudobases of the source and target
ordered by number of vertices. It is straightforward to see that 
$
\wzeta_\cG = \Psi_\cG\circ \wzeta_\cG$
and
$
\wzeta_\cO = \Psi_\cO\circ \wzeta_\cO
$
and
$
\wzeta_\cP = \Psi_\cP\circ \wzeta_\cP
$, and that each of the vertical maps is compatible with the relevant
product, unit, and counit. It is also easy to see that the right square in the given diagram commutes. 

The nontrivial things to check are that $\Psi_\cG$ and $\Psi_\cP$ 
are coalgebra morphisms, and that the left square in the given diagram commutes.
This amounts to verifying three identities:
\ben
\item[(a)] If $G$ is a graph then the sum
\[ 
\sum_{S\cup T = V(G)} \sum_{\substack{\alpha' : S \to \PP \\ \alpha'' : T \to \PP}} \tfrac{1}{\alpha'!\alpha''!} 
\Cl_{\alpha'}(G|_S)\otimes \Cl_{\alpha'}(G|_T)
\] is equal to 
\[
\sum_{\alpha: V(G) \to \PP} \sum_{S\sqcup T = V(\Cl_\alpha(G))} \tfrac{1}{\alpha!} \Cl_\alpha(G)|_S \otimes \Cl_\alpha(G)|_T.\]

\item[(b)] If $(D,\gamma)$ is a labeled poset then the sum
\[
\sum_{
\substack{S\cup T = V(D)\\
S\text{ lower set} \\
T\text{ upper set} \\
S\cap T\text{ antichain}}} \sum_{\substack{\alpha' : S \to \PP \\ \alpha'':T \to \PP}}  \oCl_{\alpha'}(D|_S,\gamma|_S) \otimes  \oCl_{\alpha''}(D|_T,\gamma|_T)
\]
is equal to \[
\sum_{\alpha: V(D) \to \PP} \sum_{\substack{S\sqcup T = V(\oCl_\alpha(D)) \\ S\text{ lower set} \\ T \text{ upper set}}}  (\oCl_\alpha(D)|_S, \tilde \gamma|_S) \otimes (\oCl_\alpha(D)|_T, \tilde\gamma|_T).\]

\item[(c)] If $G$ is a graph then 
\[ \sum_{D \in \MAO(G)} \sum_{\alpha : V(D) \to \PP} \oCl_\alpha(D)
= \sum_{\alpha : V(G) \to \PP} \sum_{D \in \MAO(\Cl_\alpha(G))} \tfrac{1}{\alpha!} \Cl_\alpha(D).\] 
\een
Each of these identities follows as a straightforward, if slightly cumbersome, exercises in algebra. We omit the details.
\end{proof}

The preceding result reduces the natural question
of finding antipode formulas for $\mG$, $\mO$, and $\mP$ (see, e.g.,
\cite[Problem 3.5]{LM2019})
to the more familiar graded Hopf algebras $\cG$, $\cO$, and $\cP$,
where some formulas are already known  \cite{HumMar}.

\section{Kromatic quasisymmetric functions}
\label{qana-sect1}

In this section we study two quasisymmetric refinements of $\bX_G$.
Our goal in these constructions to find interesting common generalizations
of \cite{CPS} and \cite{SW2016}.

\subsection{Hopf algebras for chromatic quasisymmetric functions}\label{qana-subsect1}

Let $G$ be an \defn{ordered graph}, that is, a graph with a total order $<$ on its vertex set $V(G)$.
One can think of the ordering on $V(G)$ as defining an acyclic orientation on the edges of $G$,
and 
we do not distinguish between $G$ and another ordered graph $H$ if the corresponding directed acyclic graphs are isomorphic.

Fix an invertible element $q \in \KK$.
If $\oG_n$ is the free $\KK$-module spanned by all (isomorphism classes of)
ordered graphs with $n$ vertices, then $\oG := \bigoplus_{n \in \NN} \oG_n$
has a graded connected Hopf algebra structure in
which the product is disjoint union and the coproduct is the $\KK$-linear map $\Delta_q$ satisfying
\be\label{bDq-eq}
\textstyle
\Delta_q(G)= \sum_{S\sqcup T = V(G)}  q^{\asc_{G}(S,T)}G|_S \otimes G|_T \in \oG\htimes \oG
\ee
for each ordered graph $G$, where
\[
\asc_{G}(S,T):=|\{(s,t) \in S\times T  : \{s,t\} \in E(G)\text{ and }s<t\}|.
\]
 This structure becomes a combinatorial Hopf algebra
when paired with the algebra morphism  $\ozeta : \oG \to \KK$ sending $G \mapsto 0^{|E(G)|}$. 
It follows from \cite{ABS}
that the unique 
morphism $(\oG,\ozeta) \to (\QSym,\zetaq)$ sends an ordered graph $G$ to the 
chromatic quasisymmetric function $X_G(q)$ of Shareshian and Wachs from Definition~\ref{cqf-def}.

We have not been able to find a natural $K$-theoretic  
 generalization of the coproduct $\Delta_q$
that leads to a $q$-analogue of the LC-Hopf algebra $\mG$.
The simplest idea would be to replace the disjoint union
$\sqcup$ in \eqref{bDq-eq} by arbitrary union $\cup$, but this 
does not lead to a co-associative map on the completion of $\oG$. This problem remains if we change the $q$-power exponent $\asc_G(S,T)$
to other forms that are equivalent in the homogeneous case, like $\asc_G(S- T, T)$, $\asc_G(S, T- S)$, or $\asc_G(S- T, T- S)$.

\subsection{A quasisymmetric kromatic function}

Nevertheless, there do appear to be interesting  ways
of defining a simultaneous $K$-theoretic generalization of $X_G(q)$ and $q$-analogue of $\bX_G$. We examine two such constructions, the first of which is given as follows.

\begin{remark}
For the rest of this section we assume $\KK \supseteq \ZZ$ and let $q$ be a formal parameter.
We will consider the polynomial and power series rings \[\Sym[q] \subset \mQSym[q]\subset \mQSym\llbracket q\rrbracket.\]
\end{remark}

Recall for $\kappa : V(G) \to \PP$ that $\asc_G(\kappa)$
and $\des_G(\kappa)$ respectively count the edges $\{u,v\} \in E(G)$ with $u<v$ 
and $\kappa(u) <\kappa(v)$ or $\kappa(u)>\kappa(v)$.

\begin{definition}\label{or-def}
For an ordered graph  $G$ define
\[
  \qX_G(q) = \sum_{\kappa}  q^{\asc_{G}(\max\circ \kappa)} x^\kappa  \in \mQSym[q]
 \]
where the sum is over all proper set-valued colorings  $\kappa : V(G) \to \Set(\PP)$.
\end{definition}

One recovers $X_G(q)$
 from $\qX_G(q)$ as its homogeneous component of lowest degree in the $x_i$ variables, setting $\deg x_i = 1$ and $\deg q = 0$.
 Despite the discussion in Section~\ref{qana-subsect1}, it is clear that if $G=G_1\sqcup G_2$ is the disjoint union of two ordered graphs then 
$
 \qX_G(q) = \qX_{G_1}(q) \qX_{G_2}(q).
$
In addition to the following example, see
Table~\ref{qX-tab} for some computations of $\qX_G(q)$.

\begin{example}\label{or-k-ex} Suppose $G=K_n$ is the complete graph on the vertex set $[n]$.
There are $ {r \brace n} $ ways of partitioning $r$ colors into $n$ nonempty subsets. For each way of assigning these subsets 
to the vertices of $G$ to form a proper set-valued coloring, there is a unique permutation $\sigma \in S_n$ whose letters appear in the same relative order as $\max(\kappa(n))$, $\max(\kappa(n-1))$, \dots, $\max(\kappa(1))$.
The inversion set of $\sigma$ coincides with $\asc( \max\circ \kappa)$, and it is well-known (see \cite[(A.2)]{SW2016}) that 
$\sum_{\sigma \in S_n} q^{\inv(\sigma)} = [n]_q!$ as defined Example~\ref{nq-ex}.
We conclude  that
\[\textstyle
 \qX_{K_n}(q) = [n]_q! \sum_{r= n}^\infty {r \brace n}  e_r =  \frac{[n]_q!}{n!} \bX_{K_n}=  [n]_q! \sum_{r= n}^\infty {r -1\brace n-1}  \bare_r\]
using Proposition~\ref{bXk-prop} for the last equality.
\end{example}



 \begin{table}[htb]
\centerline{
\scalebox{1}{
\begin{tabular}{|l|l|l|}
\hline & & 
\\[-8pt]
Graph $G$   & $\qX_G(q)$ in the monomial basis $\{M_\alpha\}$ & $\qX_G(q)$ in the multifundamental basis $\{\bL_\alpha\}$
\\[-8pt] & & 
\\ \hline & & 
\\[-8pt]  
  \begin{tikzpicture}[scale=1,baseline=(void.base)]
  \node at (0, 0)(1){2};
    \node at (1, 0)(2){3};
    \node at (0.5, 0.866)(3){1};
    \node at (0.5,0.433)(void) {};
    \draw[black, thick] (3) -- (1);
    \draw[black, thick] (2) -- (1);
  \end{tikzpicture} 
  &  \pbox[c]{7cm}{\scriptsize$ (1 + 4q + q^2) M_{111} + q M_{12} + q M_{21} + (6 + 24q + 6q^2) M_{1111} + 5q M_{112} + 5q M_{121} + (1 + 3q + q^2) M_{211}+ \dots$} 
  & \pbox[c]{7cm}{\scriptsize $ (1 + 2q + q^2) \bL_{111} + q \bL_{12} + q \bL_{21} + (2 + 5q + 2q^2) \bL_{1111} + 3q \bL_{112} + 3q \bL_{121} + (1 + q + q^2) \bL_{211} + \dots $
}
\\ [-4pt] && \\
  \begin{tikzpicture}[scale=1,baseline=(void.base)]
  \node at (0, 0)(1){3};
    \node at (1, 0)(2){2};
    \node at (0.5, 0.866)(3){1};
    \node at (0.5,0.433)(void) {};
    \draw[black, thick] (3) -- (1);
    \draw[black, thick] (2) -- (1);
  \end{tikzpicture} 
  &  \pbox[c]{7cm}{\scriptsize$ (2 + 2 q + 2 q^2) M_{111} + M_{12} +  q^2 M_{21} + (12 + 12 q + 12 q^2) M_{1111} + 5 M_{112} + (2 + 3 q^2) M_{121} + (2 q + 3 q^2) M_{211}+ \dots$} 
  &  \pbox[c]{7cm}{\scriptsize$ (1 + 2q + q^2) \bL_{111} + \bL_{12} + q^2 \bL_{21} + (2 + 4q + 3q^2) \bL_{1111} + 3 \bL_{112} + (1 + 2q^2) \bL_{121} + (2q + q^2) \bL_{211}+\dots$}
\\ [-4pt] && \\
  \begin{tikzpicture}[scale=1,baseline=(void.base)]
  \node at (0, 0)(1){1};
    \node at (1, 0)(2){3};
    \node at (0.5, 0.866)(3){2};
    \node at (0.5,0.433)(void) {};
    \draw[black, thick] (3) -- (1);
    \draw[black, thick] (2) -- (1);
  \end{tikzpicture} 
  &  \pbox[c]{7cm}{\scriptsize$(2 + 2 q + 2 q^2) M_{111} +  q^2 M_{12} + M_{21} + (12 + 12 q + 12 q^2) M_{1111} + 5 q^2 M_{112} + (3 + 2 q^2) M_{121} + (3 + 2 q) M_{211}+ \dots$} 
  & \pbox[c]{7cm}{\scriptsize $ (1 + 2q + q^2) \bL_{111} + q^2 \bL_{12} + \bL_{21} + (3 + 4q + 2q^2) \bL_{1111} + 3q^2 \bL_{112} + (2 + q^2) \bL_{121} + (1 + 2q) \bL_{211} + \dots $}
 \\[-8pt] & & 
 \\ \hline & & 
\\[-8pt]  
    \begin{tikzpicture}[scale=1,baseline=(void.base)]
  \node at (0, 0)(1){2};
    \node at (1, 0)(2){3};
    \node at (0.5, 0.866)(3){1};
    \node at (0.5,.433)(void) {};
    \draw[black, thick] (3) -- (1);
    \draw[black, thick] (2) -- (1);
    \draw[black, thick] (2) -- (3);
  \end{tikzpicture} & 
    \pbox[c]{7cm}{\scriptsize $ (1 + 2 q + 2 q^2 +  q^3) M_{111} + (6 + 12 q + 12 q^2 + 6 q^3) M_{1111}+ \dots $ }  &
  \pbox[c]{7cm}{\scriptsize$ (1 + 2q + 2q^2 + q^3) \bL_{111} + (3 + 6q + 6q^2 + 3q^3) \bL_{1111} + \dots $}
   \\[-8pt] & & 
    \\ \hline & & 
\\[-8pt]  
   \begin{tikzpicture}[scale=1,baseline=(void.base)]
  \node at (0, 0)(1){1};
    \node at (1, 0)(2){2};
    \node at (1, 1)(3){3};
     \node at (0, 1)(4){4};
         \node at (0.5,.5)(void) {};
    \draw[black, thick] (3) -- (2);
    \draw[black, thick] (2) -- (1);
    \draw[black, thick] (4) -- (3);
    \draw[black, thick] (1) -- (4); 
  \end{tikzpicture} & 
    \pbox[c]{7cm}{\scriptsize $(1 + 8q + 6q^2 + 8q^3 + q^4) M_{1111} + (2q + 2q^3) M_{112} + (2q + 2q^3) M_{121} + (2q + 2q^3) M_{211} + (q + q^3) M_{22} + (10 + 80q + 60q^2 + 80q^3 + 10q^4) M_{11111} + (18q + 18q^3) M_{1112} + (18q + 18q^3) M_{1121} + (1 + 14q + 6q^2 + 14q^3 + q^4) M_{1211} + (1 + 14q + 6q^2 + 14q^3 + q^4) M_{2111} + (4q + 4q^3) M_{122} + (4q + 4q^3) M_{212} + (4q + 4q^3) M_{221} + \dots $ }  & 
   \pbox[c]{7cm}{\scriptsize $(1 + 3q + 6q^2 + 3q^3 + q^4) \bL_{1111} + (q + q^3) \bL_{112} + (2q + 2q^3) \bL_{121} + (q + q^3) \bL_{211} + (q + q^3) \bL_{22} + (4 + 16q + 24q^2 + 16q^3 + 4q^4) \bL_{11111} + (7q + 7q^3) \bL_{1112} + (9q + 9q^3) \bL_{1121} + (1 + 5q + 6q^2 + 5q^3 + q^4) \bL_{1211} + (1 + 3q + 6q^2 + 3q^3 + q^4) \bL_{2111} + (3q + 3q^3) \bL_{122} + (2q + 2q^3) \bL_{212} + (3q + 3q^3) \bL_{221} + \dots$}
\\ [-4pt] && \\
      \begin{tikzpicture}[scale=1,baseline=(void.base)]
  \node at (0, 0)(1){1};
    \node at (1, 0)(2){2};
    \node at (1, 1)(3){4};
     \node at (0, 1)(4){3};
         \node at (0.5,.5)(void) {};
    \draw[black, thick] (3) -- (2);
    \draw[black, thick] (2) -- (1);
    \draw[black, thick] (4) -- (3);
    \draw[black, thick] (1) -- (4); 
  \end{tikzpicture} & 
    \pbox[c]{7cm}{\scriptsize $(2 + 4q + 12q^2 + 4q^3 + 2q^4) M_{1111} + 4q^2 M_{112} + (1 + 2q^2 + q^4) M_{121} + 4q^2 M_{211} + 2q^2 M_{22} + (20 + 40q + 120q^2 + 40q^3 + 20q^4) M_{11111} + 36q^2 M_{1112} + (7 + 22q^2 + 7q^4) M_{1121} + (3 + 4q + 22q^2 + 4q^3 + 3q^4) M_{1211} + (2 + 4q + 24q^2 + 4q^3 + 2q^4) M_{2111} + 8q^2 M_{122} + 8q^2 M_{212} + (1 + 6q^2 + q^4) M_{221} + \dots $ }  & 
   \pbox[c]{7cm}{\scriptsize $(1 + 4q + 4q^2 + 4q^3 + q^4) \bL_{1111} + 2q^2 \bL_{112} + (1 + 2q^2 + q^4) \bL_{121} + 2q^2 \bL_{211} + 2q^2 \bL_{22} + (5 + 16q + 22q^2 + 16q^3 + 5q^4) \bL_{11111} + 14q^2 \bL_{1112} + (4 + 10q^2 + 4q^4) \bL_{1121} + (1 + 4q + 8q^2 + 4q^3 + q^4) \bL_{1211} + (1 + 4q + 4q^2 + 4q^3 + q^4) \bL_{2111} + 6q^2 \bL_{122} + 4q^2 \bL_{212} + (1 + 4q^2 + q^4) \bL_{221} + \dots$}
\\ [-4pt] && \\
      \begin{tikzpicture}[scale=1,baseline=(void.base)]
  \node at (0, 0)(1){1};
    \node at (1, 0)(2){3};
    \node at (1, 1)(3){2};
     \node at (0, 1)(4){4};
         \node at (0.5,.5)(void) {};
    \draw[black, thick] (3) -- (2);
    \draw[black, thick] (2) -- (1);
    \draw[black, thick] (4) -- (3);
    \draw[black, thick] (1) -- (4); 
  \end{tikzpicture} & 
    \pbox[c]{7cm}{\scriptsize $(4 + 4q + 8q^2 + 4q^3 + 4q^4) M_{1111} + (2 + 2q^4) M_{112} + 4q^2 M_{121} + (2 + 2q^4) M_{211} + (1 + q^4) M_{22} + (40 + 40q + 80q^2 + 40q^3 + 40q^4) M_{11111} + (18 + 18q^4) M_{1112} + (4 + 28q^2 + 4q^4) M_{1121} + (8 + 4q + 12q^2 + 4q^3 + 8q^4) M_{1211} + (10 + 4q + 8q^2 + 4q^3 + 10q^4) M_{2111} + (4 + 4q^4) M_{122} + (4 + 4q^4) M_{212} + (2 + 4q^2 + 2q^4) M_{221} + \dots $ }  & 
   \pbox[c]{7cm}{\scriptsize $(1 + 4q + 4q^2 + 4q^3 + q^4) \bL_{1111} + (1 + q^4) \bL_{112} + 4q^2 \bL_{121} + (1 + q^4) \bL_{211} + (1 + q^4) \bL_{22} + (6 + 16q + 20q^2 + 16q^3 + 6q^4) \bL_{11111} + (7 + 7q^4) \bL_{1112} + (1 + 16q^2 + q^4) \bL_{1121} + (3 + 4q + 4q^2 + 4q^3 + 3q^4) \bL_{1211} + (1 + 4q + 4q^2 + 4q^3 + q^4) \bL_{2111} + (3 + 3q^4) \bL_{122} + (2 + 2q^4) \bL_{212} + (1 + 4q^2 + q^4) \bL_{221} + \dots$}
   \\[-8pt] & & 
   \\ \hline & & 
\\[-8pt]  
\begin{tikzpicture}[scale=1,baseline=(void.base)]
  \node at (0, 0)(1){1};
    \node at (1, 0)(2){2};
    \node at (1, 1)(3){3};
     \node at (0, 1)(4){4};
    \node at (0.5,0.5)(void){};
    \draw[black, thick] (3) -- (1);
    \draw[black, thick] (2) -- (1);
    \draw[black, thick] (4) -- (1);
    \draw[black, thick] (4) -- (3);
    \draw[black, thick] (2) -- (4); 
  \end{tikzpicture} & 
  \pbox[c]{7cm}{\scriptsize $(2 + 4q + 6q^2 + 6q^3 + 4q^4 + 2q^5) M_{1111} + (q^2 + q^3) M_{112} + (1 + q^5) M_{121} + (q^2 + q^3) M_{211} + (20 + 40q + 60q^2 + 60q^3 + 40q^4 + 20q^5) M_{11111} + (9q^2 + 9q^3) M_{1112} + (7 + 2q^2 + 2q^3 + 7q^5) M_{1121} + (3 + 2q + 4q^2 + 4q^3 + 2q^4 + 3q^5) M_{1211} + (2q + 7q^2 + 7q^3 + 2q^4) M_{2111}+\dots$  } & 
  \pbox[c]{7cm}{\scriptsize $(1 + 4q + 4q^2 + 4q^3 + 4q^4 + q^5) \bL_{1111} + (q^2 + q^3) \bL_{112} + (1 + q^5) \bL_{121} + (q^2 + q^3) \bL_{211} + (6 + 20q + 22q^2 + 22q^3 + 20q^4 + 6q^5) \bL_{11111} + (6q^2 + 6q^3) \bL_{1112} + (5 + q^2 + q^3 + 5q^5) \bL_{1121} + (1 + 2q + 3q^2 + 3q^3 + 2q^4 + q^5) \bL_{1211} + (2q + 4q^2 + 4q^3 + 2q^4) \bL_{2111}+\dots$} 
   \\[-8pt] & & 
   \\ \hline & & 
\\[-8pt]  
  \begin{tikzpicture}[scale=1,baseline=(void.base)]
  \node at (0, 0)(1){1};
    \node at (1, 0)(2){2};
    \node at (1, 1)(3){3};
     \node at (0, 1)(4){4};
    \node at (0.5,0.5)(void){};
    \draw[black, thick] (3) -- (1);
    \draw[black, thick] (2) -- (1);
    \draw[black, thick] (1) -- (4);
  \end{tikzpicture} 
 & \pbox[c]{7cm}{\scriptsize $(6 + 6q + 6q^2 + 6q^3) M_{1111} + (3q^2 + 3q^3) M_{112} + (3 + 3q^3) M_{121} + (3 + 3q) M_{211} + q^3 M_{13} + M_{31} + (60 + 60q + 60q^2 + 60q^3) M_{11111} + (27q^2 + 27q^3) M_{1112} + (21 + 6q^2 + 27q^3) M_{1121} + (21 + 18q + 15q^3) M_{1211} + (21 + 21q + 12q^2) M_{2111} + 7q^3 M_{113} + (4 + 3q^3) M_{131} + (4 + 3q) M_{311} + 6q^3 M_{122} + 6q^2 M_{212} + 6 M_{221}+\dots$ }
 &  \pbox[c]{7cm}{\scriptsize $(1 + 3q + 3q^2 + q^3) \bL_{1111} + (3q^2 + 2q^3) \bL_{112} + (2 + 2q^3) \bL_{121} + (2 + 3q) \bL_{211} + q^3 \bL_{13} + \bL_{31} + (7 + 12q + 9q^2 + 3q^3) \bL_{11111} + (12q^2 + 8q^3) \bL_{1112} + (7 + 3q^2 + 11q^3) \bL_{1121} + (7 + 12q + 2q^3) \bL_{1211} + (5 + 9q + 6q^2) \bL_{2111} + 5q^3 \bL_{113} + (3 + 2q^3) \bL_{131} + (2 + 3q) \bL_{311} + 5q^3 \bL_{122} + 6q^2 \bL_{212} + 5 \bL_{221}+\dots$} 
  \\[-8pt] & & 
  \\ \hline
\end{tabular}}
}
\caption{Partial computations of $\qX_G(q)$ for some small graphs $G$. The ellipses ``$\dots$'' mask higher order terms  indexed by compositions $\alpha$ with  $|\alpha|>|V(G)|+1$.}\label{qX-tab}
\end{table}
 
Let us clarify the seeming asymmetry in Definition~\ref{or-def}.
 Form $\qX^{\des,\min}_G(q)$  by replacing ``$\asc$'' by ``$\des$''
 and ``$\max$'' by ``$\min$'' in Definition~\ref{or-def}.
 Construct 
 \[\qX^{\asc,\min}_G(q), \quad\qX^{\asc,\max}_G(q),\quand \qX^{\des,\max}_G(q)\] likewise,
 so that we have $\qX_G(q)=\qX^{\asc,\max}_G(q)$.
 Now let $\rho$ be the continuous linear map $\mQSym \to \mQSym$ sending 
 \be \rho : M_{(\alpha_1,\alpha_2,\dots,\alpha_k)} \mapsto M_{(\alpha_k,\dots,\alpha_2,\alpha_1)}.\ee
 This map extends to a ring involution of $\mQSym[q]$ fixing $q$.
 Finally let $\tau $ be the $\mQSym$-linear map  $\mQSym[q] \to \mQSym[q]$ acting on nonzero elements as  
 \be \tau : f(q) \mapsto q^{\deg_q (f)}  f(q^{-1}).\ee
 In other words, $\tau$ reverses the order of the coefficients of $f \neq 0$, viewed as a polynomial in $q$ 
 with coefficients in $\mQSym$.
 
\begin{proposition}
If $G$ is an ordered graph then
 \[ \qX_G(q) 
= \rho\(\qX^{\des,\min}_G(q)\)
= \tau\(\qX^{\des,\max}_G(q)\)
= \rho\circ \tau\(\qX^{\asc,\min}_G(q)\).
\]
\end{proposition}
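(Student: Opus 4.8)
The plan is to deduce all three equalities from two elementary symmetries of the set of proper set-valued colorings, checking each at the level of coefficients in the monomial pseudobasis $\{M_\alpha\}$. For a coloring $\kappa$ using exactly the colors in $[k]$, let its \emph{content composition} $\beta(\kappa)=(\beta_1,\dots,\beta_k)$ record, in increasing order of color, the number $\beta_i$ of vertices $v$ with $i\in\kappa(v)$. Using that for a quasisymmetric series the coefficient of $M_\beta$ equals the coefficient of the monomial $x_1^{\beta_1}\cdots x_k^{\beta_k}$, we have $[M_\beta]\qX^{\asc,\max}_G(q)=\sum_{\kappa}q^{\asc_G(\max\circ\kappa)}$, summed over the colorings $\kappa$ with $\beta(\kappa)=\beta$, and analogously for the other three variants. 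Since $\rho$ and $\tau$ are continuous, it suffices to verify the corresponding coefficient identities for every $\beta$.

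The first symmetry is a \emph{complementation}, implemented by $\tau$. The key structural point is that $\kappa(u)\cap\kappa(v)=\varnothing$ on each edge, which forces $\max\kappa(u)\neq\max\kappa(v)$ and $\min\kappa(u)\neq\min\kappa(v)$; hence for $\max\circ\kappa$ (resp. $\min\circ\kappa$) every edge is either an ascent or a descent, so $\asc_G(\max\circ\kappa)+\des_G(\max\circ\kappa)=|E(G)|$, and likewise for $\min$. I would then check that the top $q$-degree of $\qX^{\des,\max}_G(q)$ is exactly $|E(G)|$, witnessed by the singleton coloring $v_i\mapsto\{n+1-i\}$ (listing the vertices as $v_1<\cdots<v_n$), which is proper and makes every edge a descent of $\max\circ\kappa$. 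Then $\tau$ multiplies by $q^{|E(G)|}$ and inverts $q$, sending each term $q^{\des_G(\max\circ\kappa)}x^\kappa$ to $q^{|E(G)|-\des_G(\max\circ\kappa)}x^\kappa=q^{\asc_G(\max\circ\kappa)}x^\kappa$, so $\tau(\qX^{\des,\max}_G(q))=\qX_G(q)$. The same argument with $\min$ (and the coloring $v_i\mapsto\{i\}$) gives $\tau(\qX^{\asc,\min}_G(q))=\qX^{\des,\min}_G(q)$.

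The second symmetry is \emph{color reversal}, implemented by $\rho$. For a coloring $\kappa$ using colors in $[k]$, set $\kappa'(v)=\{k+1-i : i\in\kappa(v)\}$; this is again a proper set-valued coloring, and I would check that $\beta(\kappa')=(\beta_k,\dots,\beta_1)$ while $\min\kappa'(v)=k+1-\max\kappa(v)$, so that $\min\circ\kappa'$ is order-reversed from $\max\circ\kappa$ and consequently $\des_G(\min\circ\kappa')=\asc_G(\max\circ\kappa)$. Since $\rho$ acts on the pseudobasis by $M_{(\alpha_1,\dots,\alpha_k)}\mapsto M_{(\alpha_k,\dots,\alpha_1)}$, we have $[M_\beta]\rho(\qX^{\des,\min}_G(q))=[M_{(\beta_k,\dots,\beta_1)}]\qX^{\des,\min}_G(q)$, and the involution $\kappa\leftrightarrow\kappa'$ identifies this with $[M_\beta]\qX_G(q)$; hence $\rho(\qX^{\des,\min}_G(q))=\qX_G(q)$. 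Combining the two symmetries gives the last equality, $\rho\circ\tau(\qX^{\asc,\min}_G(q))=\rho(\qX^{\des,\min}_G(q))=\qX_G(q)$.

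The computations are routine once the framework is fixed; the main obstacle is bookkeeping rather than conceptual. Specifically, I expect the delicate part to be matching the \emph{abstract} action of $\rho$ on the $M$-pseudobasis with the \emph{concrete} color-reversal bijection on colorings, keeping the content-composition reversal consistent with the simultaneous $\max/\min$ and $\asc/\des$ swaps, and confirming that the $q$-degree of each variant equals exactly $|E(G)|$ so that $\tau$ realizes the complementation uniformly instead of shifting exponents by the wrong amount. I would organize the write-up by isolating the complementation and color-reversal statements as two short lemmas and then assembling the three equalities, rather than handling the four functions $\qX^{\asc,\max}_G$, $\qX^{\asc,\min}_G$, $\qX^{\des,\max}_G$, $\qX^{\des,\min}_G$ independently.
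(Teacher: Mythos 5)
Your proposal is correct and follows essentially the same route as the paper: the $\rho$-identity comes from a color-reversal involution on proper set-valued colorings (the paper reverses within the set of used colors directly, while you fix a content composition and reverse within $[k]$, which amounts to the same bijection read off coefficient-by-coefficient in the $M_\alpha$ pseudobasis), and the $\tau$-identities come from the complementation $\asc+\des=|E(G)|$ together with the check that the relevant $q$-degree is exactly $|E(G)|$. Your chaining of the third equality as $\rho\circ\tau(\qX^{\asc,\min}_G(q))=\rho(\qX^{\des,\min}_G(q))=\qX_G(q)$ is a slightly cleaner bookkeeping of the same two symmetries the paper uses.
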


\begin{proof}
Fix an ordered graph $G$.
Given any map $\kappa : V(G) \to \Set(\PP)$,
let \[\textstyle S := \bigcup_{v \in V(G)} \kappa(v) = \{i_0 <i_1 < \dots < i_m\}\]
and let $\sigma : S \to S$ be the permutation sending $i_j \mapsto i_{m-j}$ and  
then define $\kappa^\ast : V(G) \to \Set(\PP)$ by $\kappa^\ast(v) =  \{\sigma(i) : i \in \kappa(v)\}$. 
The operation $\kappa\mapsto \kappa^\ast$ is an involution of the set of proper set-valued colorings of $G$
with \[\asc_G(\max \circ \kappa) = \des_G(\min\circ\kappa^\ast),\]
and if $x^\kappa = x_{i_0}^{a_0}x_{i_1}^{a_1}\cdots x_{i_m}^{a_m}$ then $x^{\kappa^\ast} = \rho(x^\kappa) := x_{i_0}^{a_m} x_{i_1} ^{a_{m-1}}\cdots x_{i_m}^{a_0}$.
Thus 
\[ \qX_G(q) =  \sum_{\kappa}  q^{\asc_{G}(\max\circ \kappa)} x^\kappa
=  \sum_{\kappa}  q^{\des_{G}(\min\circ \kappa^\ast)} \rho(x^{\kappa^\ast}) =  \rho\(\qX^{\des,\min}_G(q)\).
\]

Since  $\max(\kappa(u)) \neq \max(\kappa(v))$ if $\kappa$ is proper and $\{u,v\} \in E(G)$,
it is clear that $\qX_G(q) = q^{|E(G)|} \cdot \qX^{\des,\max}_G(q^{-1})$. The latter expression is the same as what we get by applying $\tau$ since $|E(G)|$ is the degree of $\qX_G(q)$ as a polynomial in $q$:
this number is an upper bounded for the degree by definition, and  
if we write $V(G) = \{v_1 < v_2 < \dots <v_n\}$
then $\asc_G(\kappa)=|E(G)|$ for the proper set-valued coloring with $\kappa(v_i) = \{i\}$.

It follows by a similar argument that  $\qX^{\des,\max}_G(q)=\tau\(\qX^{\asc,\min}_G(q)\)$,
which gives the third desired equality.
\end{proof}

Fix $D \in \MAO(G)$.
Each vertex in $D$
has the form $(v,i)$ for some $v \in V(G)$ and $i \in \PP$.
Define 
$ \align(D) :=   |\{  (u,i)\to(v,j) \in E(D) : u<v\text{ and }i=j=1\}|.$

\begin{proposition} \label{MAO-prop2}
If $G$ is an ordered graph  then
\[\textstyle \qX_{G}(q) = \sum_{D \in \MAO(G)} q^{\align(D) } \bGamma(D).\]
This power series is multifundamental positive
 in the sense of being a possibly infinite $\NN[q]$-linear combination of 
 multifundamental quasisymmetric functions.
\end{proposition}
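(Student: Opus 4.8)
The plan is to upgrade the identity $\bX_G = \sum_{D\in\MAO(G)}\bGamma(D)$ from Corollary~\ref{mf-p-cor} so that it keeps track of the $q$-statistic. Concretely, I would produce an explicit monomial-preserving bijection
\[
\Phi\colon \{\text{proper set-valued colorings } \kappa \text{ of } G\}\ \xrightarrow{\ \sim\ }\ \bigsqcup_{D\in\MAO(G)} \{\text{set-valued } P\text{-partitions } \eta \text{ of } D\}
\]
with $x^\kappa = x^\eta$, and then show that the component $D$ into which $\Phi$ sends $\kappa$ satisfies $\align(D) = \asc_G(\max\circ\kappa)$. Granting this, summing over all $\kappa$ immediately gives
\[
\qX_G(q) = \sum_\kappa q^{\asc_G(\max\circ\kappa)}x^\kappa = \sum_{D\in\MAO(G)} q^{\align(D)}\sum_{\eta} x^\eta = \sum_{D\in\MAO(G)} q^{\align(D)}\bGamma(D),
\]
which is the desired formula.

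To build $\Phi$, given $\kappa$ I would partition each color set $\kappa(v)$ into consecutive \emph{blocks}, inserting a break between two consecutive colors $d<d'$ of $\kappa(v)$ exactly when some neighbor $w$ of $v$ has a color $c\in\kappa(w)$ with $d<c<d'$. Declaring the top block (largest colors) to be the clan vertex $(v,1)$, the next $(v,2)$, and so on defines a type $\alpha$ and a labeling with $\eta((v,i))$ equal to the $i$-th block; I orient the clan edges within $v$ by index and the clan edges between adjacent vertices $v,w$ according to which of $\eta((v,i)),\eta((w,j))$ is $\prec$ the other. The one genuinely technical point, and the main obstacle, is verifying that this $D$ lies in $\MAO(G)$ and that $\eta$ is a legitimate set-valued $P$-partition; this rests on the observation that, because no neighbor color lies strictly between two elements of a single block, any two blocks of adjacent vertices have disjoint, $\prec$-comparable color ranges, so the between-vertex edges are well defined and $\eta(u)\prec\eta(v)$ holds along every edge $u\to v$ (whence $D$ is acyclic). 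Property (a) of Proposition~\ref{MAO-prop} holds by construction, while property (b) holds because each block break is witnessed by a neighbor color, yielding a second directed path $(v,i+1)\to(w,j)\to(v,i)$ that avoids within-$v$ edges. The inverse sends $(D,\eta)$ to $\kappa(v):=\bigcup_i \eta((v,i))$, and one checks that the break positions of this $\kappa$ recover the blocks $\eta((v,i))$, again using comparability of adjacent blocks.

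With $\Phi$ in hand, the statistic match is clean. The top block $(v,1)$ carries the largest colors, so $\max\kappa(v)=\max\eta((v,1))$, and for each edge $\{u,v\}\in E(G)$ the clan edge joining the top copies $(u,1),(v,1)$ is oriented as $(u,1)\to(v,1)$ if and only if $\eta((u,1))\prec\eta((v,1))$, i.e. if and only if $\max\kappa(u)<\max\kappa(v)$. Thus the ascents of $\max\circ\kappa$, taken over edges $\{u,v\}$ with $u<v$, correspond exactly to the aligned top edges counted by $\align(D)$, giving $\asc_G(\max\circ\kappa)=\align(D)$ and hence the displayed formula. Finally, multifundamental positivity follows formally: by \eqref{ML-eq} each $\bGamma(D)=\sum_{w\in\mL(D)}\bL_{\ell(w),\Des(w,\gamma^\op)}$ is an $\NN$-linear combination of multifundamental functions, and multiplying by $q^{\align(D)}\in\NN[q]$ and summing over $D\in\MAO(G)$ keeps all coefficients in $\NN[q]$.
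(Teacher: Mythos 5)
Your proposal is correct and follows essentially the same route as the paper: the same block decomposition of each color set $\kappa(v)$ (inserting a break between consecutive colors separated by a neighbor's color) produces the correspondence $(D,\eta)\leftrightarrow\kappa$, and the statistic match $\asc_G(\max\circ\kappa)=\align(D)$ is argued identically via the top blocks $(v,1)$, with multifundamental positivity then following from \eqref{ML-eq}. The only cosmetic difference is that the paper deduces bijectivity by citing the already-established identity $\bX_G=\sum_{D\in\MAO(G)}\bGamma(D)$ from Corollary~\ref{mf-p-cor} instead of constructing the inverse explicitly as you do.
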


\begin{proof}
Fix $D \in \MAO(G)$ and consider the maps $\kappa : V(D) \to \Set(\PP)$
that have $\kappa(u) \prec\kappa(v)$ whenever $u \to v$ is an edge in $D$,
so that $\bGamma(D) = \sum_\kappa x^\kappa$.
If $v \in V(G)$ and $i>j$ are positive integers with $(v,i),(v,j) \in V(D)$
then $\kappa((v,i)) \prec \kappa((v,j))$ by Proposition~\ref{MAO-prop}.
Define a map $\eta : V(G) \to \Set(\PP)$ by setting $\eta(v) = \bigsqcup_i \kappa((v,i))$
where the union is over all $i \in \PP$ with $(v,i) \in D$.

This is a proper set-valued coloring of $G$
since if $\{u,v\} \in E(G)$
then every vertex of the form $(u,i) \in V(D)$ is connected by an edge to every
vertex of the form $(v,j) \in V(D)$.
Observe that $q^{\asc_G(\max\circ \eta)} = q^{\align(D)}$ and $x^\eta = x^\kappa$.

We claim that every proper set-valued coloring of $G$ arises as $\eta$ for some  $D \in \MAO(G)$ and $\kappa : V(D) \to \Set(\PP)$ as above.
For such a coloring $\eta$,  let \[\eta(v) = \eta(v)_1 \sqcup \eta(v)_2 \sqcup \dots \sqcup \eta(v)_k\]
be the smallest disjoint decomposition into nonempty subsets
such that for each $i \in [k-1]$ an edge $\{v,w\} \in E(G)$ exists
with a color $c \in \eta(w)$  that has  $\eta(v)_i \succ \{c\} \succ \eta(v)_{i+1}$;
then let $\alpha(v) = k$.
Form an acyclic multi-orientation $D \in \MAO(G)$ of type $\alpha$
that has the edge $(v,i) \to (w,j)$ if and only if $v=w$ and $i>j$, or $\{v,w\} \in E(G)$ and $\eta(v)_i \prec \eta(w)_j$.
Then the coloring $\eta$ arises from the map $\kappa : V(D) \to \Set(\PP)$ given by $\kappa((v,i)) = \eta(v)_i$.

As we already know that  $\bX_{G} = \sum_{D \in \MAO(G)} \bGamma(D)$ from 
Corollary~\ref{mf-p-cor},
we conclude that the operator $(D,\kappa)\mapsto \eta$ is a bijection to proper set-valued colorings of $G$.
Thus $\qX_{G}(q) = \sum_\eta q^{\asc_G(\eta)} x^\kappa 
=  \sum_{D \in \MAO(G)}q^{\align(D)}\bGamma(D)$.
\end{proof}

We can make this expansion more explicit, generalizing a result in \cite{SW2016}.
Suppose $D$ is a directed acyclic graph. 
Write $<_D$ for the transitive closure of the relation on $V(D)$ with $x <_D y$ if  $x\to y \in E(D)$.

\begin{lemma}\label{gamma-lem}
Suppose $G = \inc(P)$ for a finite poset $P$ and $D \in \AO(G)$.
Then there exists a decreasing labeling $\gamma$ of $D$ such that 
$\gamma(x) < \gamma(y)$ for all vertices $x$ and $y$  that are incomparable under $<_D$ but have $x <_P y$.
\end{lemma}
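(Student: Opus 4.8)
The plan is to recast the existence of $\gamma$ as the existence of a linear extension of a single partial order on $V(D)$, and then to prove that the relevant relation is acyclic by a ``crossing'' argument exploiting that the edges of $G=\inc(P)$ are exactly the $<_P$-incomparable pairs. First I would reformulate: a decreasing labeling $\gamma$ of $D$ is the same data as a total order $\prec$ on $V(D)$ with $y\prec x$ whenever $x\to y\in E(D)$ (take $\gamma$ to be any order-reversing injection into $\ZZ$), and the extra requirement is exactly that $x\prec y$ whenever $x,y$ are $<_D$-incomparable with $x<_P y$. So it suffices to produce a total order extending the binary relation $\lessdot$ on $V(D)$ generated by the pairs $(y,x)$ with $x<_D y$ together with the pairs $(x,y)$ for which $x,y$ are $<_D$-incomparable and $x<_P y$. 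Such a total order exists if and only if the transitive closure of $\lessdot$ is irreflexive, so the entire problem reduces to showing that $\lessdot$ has no directed cycle.

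The key step, which I expect to be the main obstacle, is the following crossing lemma: there are no vertices $p,q,r$ with $p<_P q<_P r$, with $p<_D r$, and with both $(p,q)$ and $(q,r)$ being $<_D$-incomparable. To prove it I would fix a directed path $p=x_0\to x_1\to\cdots\to x_k=r$ in $D$; note $k\ge 2$ since $p<_P r$ forces $\{p,r\}\notin E(G)$. Each edge $\{x_j,x_{j+1}\}$ lies in $G=\inc(P)$, hence is a $<_P$-incomparable pair, so we can never have $x_j<_P q$ and $x_{j+1}>_P q$ simultaneously. As $x_0=p<_P q$ while $x_k=r>_P q$, some internal vertex $x_m$ with $p<_D x_m<_D r$ must be $<_P$-incomparable to $q$, i.e.\ $\{q,x_m\}\in E(G)$. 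Orienting that edge in $D$ yields either $q<_D x_m<_D r$ or $p<_D x_m<_D q$, contradicting the $<_D$-incomparability of $(q,r)$ or of $(p,q)$ respectively.

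Finally I would establish acyclicity of $\lessdot$ by analysing a cycle of minimal length. Two consecutive steps coming from $<_D$ merge by transitivity of $<_D$, so a minimal cycle has no two consecutive $<_D$-steps. For a $<_P$-step immediately followed by a $<_D$-step, say $c_1<_P c_2$ (with $c_1,c_2$ $<_D$-incomparable) followed by $c_3<_D c_2$, I would compare $c_1$ and $c_3$: if they are $<_D$-comparable then $c_1<_D c_3$ is impossible (it would give $c_1<_D c_2$), so $c_3<_D c_1$ and the two steps collapse to a single $<_D$-step; if they are $<_D$-incomparable then they must be $<_P$-comparable (a $<_P$-incomparable pair would be a $G$-edge, hence $<_D$-comparable), and either $c_1<_P c_3$ collapses the two steps to a single $<_P$-step, or $c_3<_P c_1$ produces the configuration $c_3<_P c_1<_P c_2$ with $c_3<_D c_2$ forbidden by the crossing lemma. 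The same bookkeeping, with the roles read off transitivity of $<_P$, handles two consecutive $<_P$-steps.

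Consequently a minimal cycle can contain no $<_P$-step followed by any step, and hence no $<_P$-step at all, forcing it to consist entirely of $<_D$-steps; but that is a cycle in $D$, contradicting acyclicity of $D$. Therefore $\lessdot$ is acyclic, any linear extension of its transitive closure furnishes the total order $\prec$, and the associated $\gamma$ is the desired decreasing labeling. The crux throughout is the crossing lemma together with the case analysis showing that every adjacency in a cycle either shortens it or triggers that lemma, and the repeated use of the implication ``$<_P$-incomparable $\Rightarrow$ $G$-edge $\Rightarrow$ $<_D$-comparable'' is what keeps the cases finite and under control.
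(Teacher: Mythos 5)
Your proof is correct, and it is self-contained where the paper's is not: the paper constructs $\gamma$ greedily (repeatedly extracting the $P$-smallest among the $<_D$-maximal remaining vertices) and defers the verification to an adaptation of the \emph{Claim} in the proof of \cite[Thm.~3.1]{SW2016}, whereas you recast the problem as extending the union of the two forcing relations to a total order and prove acyclicity directly. The combinatorial heart is the same in both treatments --- your ``crossing lemma'' (a directed path in $D$ from $p$ to $r$ with $p<_P q<_P r$ must pass through a vertex $P$-incomparable to $q$, whose oriented edge to $q$ then violates one of the two $<_D$-incomparabilities) is precisely the mechanism underlying the Shareshian--Wachs argument --- but your packaging makes the case analysis explicit and complete, which is a genuine gain given that the paper only gestures at the verification. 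Your minimal-cycle reduction checks out: every adjacent pair of steps either contracts by transitivity of $<_D$ or $<_P$, or (using that $<_P$-incomparable pairs are $G$-edges and hence $<_D$-comparable) produces exactly the forbidden crossing configuration, so only the trivially impossible all-$<_D$ cycles remain. Two cosmetic points: the parenthetical ``order-reversing injection'' should be ``order-preserving'' given how you oriented $\prec$ (as written both required inequalities on $\gamma$ come out backwards), and in the crossing lemma you should note explicitly that no internal path vertex can equal $q$ (that case also contradicts the $<_D$-incomparability of $(p,q)$ or $(q,r)$), so that the ``first vertex not below $q$'' is genuinely $P$-incomparable to $q$.
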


\begin{proof}
If we change the condition ``$x <_P y$'' 
to ``$x >_P y$''  then the lemma becomes the \emph{Claim} established in the proof of \cite[Thm.~3.1]{SW2016}. This alternate version of the lemma is equivalent to the one stated
since we can replace $P$ by its dual $P^\ast$ without changing $G$,
and $x >_{P^\ast} y$ holds if and only if  $x <_{P} y$.

More directly, following the argument in the proof of \cite[Thm.~3.1]{SW2016},
the way to construct $\gamma$ is as follows. All vertices that are maximal under $<_D$ are comparable in $P$, and we let $x_1$ be the $P$-smallest such vertex. Likewise, assuming $x_1,x_2,\dots,x_k$ have been defined, all vertices in $V(D) \setminus\{x_1,x_2,\dots,x_k\}$ that are maximal under $<_D$ are comparable in $P$, and we let $x_{k+1}$ be the $P$-smallest such vertex.
After letting $k$ range from $1$ to $|D|-1$, we set $\gamma(x_i) = i$.

It is clear that $\gamma$ is a decreasing labeling of $D$. Checking that 
$\gamma(x) < \gamma(y)$ when $x$ and $y$    are incomparable under $<_D$ with $x <_P y$ requires justification. For this, one can adapt the argument in the proof of \cite[Thm.~3.1]{SW2016}. 
\end{proof}

Following \cite{LamPylyavskyy}, a \defn{multipermutation} of $n \in \NN$
is a word $w=w_1w_2\cdots w_m$ with $\{w_1,w_2,\dots,w_m\} = \{1,2,\dots,n\}$
and $w_i \neq w_{i+1}$ for all $i\in[m-1]$. Let $\bS_n$ be the set of all multipermutations of $n$.

For $w=w_1w_2\cdots w_m  \in \bS_n$ 
let $\Inv(w)$ be the set of pairs $(w_i,w_j)$ with  $i<j $ and $w_i > w_j$ and 
$\{w_1,w_2,\dots,w_{i-1}\} \cap \{w_i\} = \{w_1,w_2,\dots,w_{j-1}\} \cap \{w_j\} = \varnothing.$
If $P$ is a poset on $[n]$ and $G = \inc(P)$ is its incomparability graph, then set 
\be
\ba
\inv_G(w) &:= |\{ (a,b) \in \Inv(w) : \{a,b\} \in E(G)\}|,
\\
 S(w,P) &:=   \{ m - i : i \in [m - 1] \text{ and }w_i \not >_P w_{i+1}\}.
 \ea
 \ee
The following generalizes \cite[Thm.~3.1]{SW2016}:

\begin{theorem}
If $G = \inc(P)$ for a poset $P$ on $[n]$ then 
\[\textstyle \qX_{G}(q) = \sum_{w \in \bS_n} q^{\inv_G(w) } \bL_{\ell(w), S(w,P)}.\]
\end{theorem}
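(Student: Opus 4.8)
The plan is to combine the expansion $\qX_G(q) = \sum_{D \in \MAO(G)} q^{\align(D)} \bGamma(D)$ from Proposition~\ref{MAO-prop2} with the multilinear-extension formula \eqref{ML-eq}, and then reorganize the resulting double sum over multipermutations. First I would record that if $D \in \MAO(G)$ has type $\alpha$, then its underlying DAG is an acyclic orientation of $\Cl_\alpha(G)$, and that $\Cl_\alpha(\inc(P)) = \inc(P_\alpha)$, where $P_\alpha$ is the poset on $\cCl_\alpha(V(G))$ with $(v,i) <_{P_\alpha} (w,j)$ if and only if $v <_P w$ (an ``antichain blow-up'' of $P$). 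Applying Lemma~\ref{gamma-lem} to this orientation produces a decreasing labeling $\gamma_D$ with $\gamma_D(x) < \gamma_D(y)$ whenever $x,y$ are $<_D$-incomparable but $x <_{P_\alpha} y$. Using this particular $\gamma_D$ in \eqref{ML-eq} gives $\bGamma(D) = \sum_{w' \in \mL(D)} \bL_{\ell(w'), \Des(w', \gamma_D)}$, so that $\qX_G(q) = \sum_{D} \sum_{w' \in \mL(D)} q^{\align(D)} \bL_{\ell(w'), \Des(w', \gamma_D)}$.

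The core of the proof is a statistic-preserving map $(D, w') \mapsto w$ from these index pairs onto $\bS_n$. I would define $w$ to be the reverse of the word obtained from $w'$ by forgetting second coordinates, $(v,i) \mapsto v$. This word lies in $\bS_n$: it uses every letter of $[n]$, and it has no equal adjacent letters, since two distinct same-base vertices $(v,a),(v,b)$ can never be adjacent in $w'$ because Proposition~\ref{MAO-prop}(b) forces a different-base vertex to lie strictly between them in the linear order, while a single vertex cannot be immediately repeated by definition of a multilinear extension. Length is clearly preserved, $\ell(w) = \ell(w')$.

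I would then check the two remaining statistics. For the $q$-weight, reversal sends the first occurrence of a letter $v$ in $w$ to the last $v$-vertex of $w'$, namely $(v,1)$; an edge counted by $\align(D)$ is precisely an edge $(u,1)\to(v,1)$ with $u<v$, forcing $(v,1)$ to precede $(u,1)$ in $w$, hence a $G$-edge recorded by $\Inv(w)$, so $\align(D) = \inv_G(w)$. For the descent set, consider consecutive entries $w'_{j}, w'_{j+1}$: either they are $D$-adjacent, in which case the decreasing labeling forces a descent and their bases are $P$-incomparable; or they are $<_D$-incomparable, in which case the property from Lemma~\ref{gamma-lem} makes $j$ a descent precisely according to the $P_\alpha$-order of the two vertices, that is, the $P$-order of their bases. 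In both cases ``$j$ is a descent of $w'$'' translates to ``the associated consecutive bases are not related by $>_P$'', and since position $i$ of the reversed word $w$ corresponds to position $m-i$ of $w'$, the reversal $m-i$ built into $S(w,P)$ gives exactly $\Des(w', \gamma_D) = S(w,P)$.

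The main obstacle is proving that $(D,w') \mapsto w$ is a \textbf{bijection}, i.e.\ constructing its inverse. Un-reversing $w$ yields a word $\bar w$ over $[n]$, and one must lift $\bar w$ to a multilinear extension $w'$ of a \emph{unique} $D \in \MAO(G)$. The subtlety is that multilinear extensions may repeat vertices, so the type $\alpha$ cannot simply be read off as the number of occurrences of each letter; instead it is pinned down by the requirement that $w'$ be a legitimate extension of an orientation of $\Cl_\alpha(G)$ lying in $\MAO(G)$. Concretely, since clan-adjacent vertices can never be interleaved in a valid extension (interleaving would leave a clan edge unorientable), the assignment of second coordinates, and then the orientation $D$ obtained by directing each clan edge as dictated by $w'$, are both forced; one must verify that this lift always exists, is unique, and lands in $\MAO(G)$ via the two-path criterion of Proposition~\ref{MAO-prop}. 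This reconstruction is the technical heart, and it parallels both the reading-word bijection underlying \eqref{ML-eq} in \cite{LamPylyavskyy} and the argument of Lemma~\ref{gamma-lem} adapted from \cite[Thm.~3.1]{SW2016}.
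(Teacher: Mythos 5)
Your proposal follows essentially the same route as the paper's proof: the same starting decomposition via Proposition~\ref{MAO-prop2} and \eqref{ML-eq}, the same identification $\Cl_\alpha(\inc(P))=\inc(P_\alpha)$ feeding into Lemma~\ref{gamma-lem}, the same forget-and-reverse map to $\bS_n$, and the same verifications that $\align(D)=\inv_G(w)$ and that the descent set becomes $S(w,P)$ after reversal. The one step you defer---reconstructing $(D,w')$ from $w$---is exactly where the paper writes down an explicit rule (two consecutive occurrences of a letter name the same vertex of $D$ precisely when every intermediate letter is $P$-comparable to it, and otherwise the second coordinate increases by exactly one), which is what your non-interleaving principle for clan-adjacent vertices yields once combined with Proposition~\ref{MAO-prop}(b), so the plan is sound and matches the paper's argument.
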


\begin{proof}
Let $G = \inc(P)$ for a poset $P$ on $[n]$.
By \eqref{ML-eq} and Proposition~\ref{MAO-prop2},
\be\label{sumsum-eq}
\qX_{G}(q) = \sum_{D \in \MAO(G)} \sum_{v \in \mL(D)} q^{\align(D) } \bL_{\ell(v), \Des(v,\gamma_D)}
\ee
where for each $D \in \MAO(G)$ the map $\gamma_D $ is any
{decreasing labeling} of $D$.

Fix $D \in \MAO(G)$ and $v=(v_1,v_2,\cdots, v_m) \in \cM(D)$.
Each entry of $v$ has the form $v_j = (w_j, i_j)$ for some $w_j \in V(G) = [n]$
and $i_j \in \PP$.
If $a := w_j = w_k$ for some $1\leq j<k\leq n$ then we must have $i_j \geq i_k$,
since if $i_j < i_k$ then a directed path must connect $(a,i_k)$ to $(a,i_j)$ in $D$ which would imply that $k<j$.  
We can never have $w_j = w_{j+1}$ since $(a,i+1) <_D (a,i)$ is never a covering relation when $D$ is an acyclic multi-orientation. Hence, the sequence $w=w_1w_2\cdots w_m$ belongs to $\bS_n$.

Let $w^\r = w_m\cdots w_2w_1 \in \bS_n$  and observe that 
\be\label{al-eq} \align(D) = \inv_G(w^\r).\ee
Note that $D$ is an acyclic orientation of the incomparability graph of the partial order  on $V(D)\subset [n]\times \PP$
that has $(x,i) < (y,j)$ if and only if $x<_P y$. By Lemma~\ref{gamma-lem},
we can therefore choose $\gamma_D$ to be a decreasing 
 labeling of $D$ with
 $\gamma_D(x,i) < \gamma_D(y,j)$ for all $(x,i),(y,i) \in V(D) $ that are incomparable under $<_D$
but have $x<_P y$.
   For this labeling, we have 
\be\label{desv-eq} 
\Des(v,\gamma_D) 
=  \{  j \in [m - 1]: w_{j} \not <_P w_{j+1}\} = S(w^\r,P) 
\ee
since for each $j \in[m-1]$ one of the following occurs:
\bei
\item[(1)] If $\{w_j,w_{j+1}\} \in E(G)$ then
 $v_j$ and $v_{j+1}$ are comparable in $<_D$ by the definition of an acyclic multi-orientation.
In this case $v_j <_D v_{j+1}$ as we cannot have $v_{j+1} <_D v_{j}$ for $v \in \cM(D)$,
so $\gamma_D(v_j) > \gamma_D(v_{j+1})$ and $j \in \Des(v,\gamma_D)$,
while at the same time $w_j \not <_P w_{j+1}$ holds as $G = \inc(P)$.

\item[(2)] If $\{w_j,w_{j+1}\} \notin E(G)$
then $v_j \to v_{j+1}$ and $v_{j+1}\to v_j$ are not edges in $D$,
so $v_j$ and $v_{j+1}$ are incomparable in $<_D$, as any vertices in a directed path 
from $v_j$ to $v_{j+1}$ in $D$ would have to appear between two consecutive elements $v$, which is impossible. In this case,
since
$w_j$ and $w_{j+1}$
are comparable in $P$, 
we have $j \in \Des(v,\gamma_D)$ if and only if 
$w_j \not<_P w_{j+1}$.
\eei

Now observe that $(D,v) \mapsto w$ is a bijection from the set of pairs $(D,v)$ with $D \in \MAO(G)$ and $v \in \cM(D)$ to $\bS_n$. To recover $v=(v_1,v_2,\dots,v_m)$ from $w=w_1w_2\cdots w_m$, 
we set $v_j = (w_j, i_j)$ where either
  $i_j = 1$ if $w_j \notin \{w_{j+1},\dots,w_m\}$,
or if there is a  smallest index $k$ with $j<k\leq m$ such that  $w_j=w_k$, then 
\[
i_j =\begin{cases} i_k&\text{if  
$w_{j+1},w_{j+2},\dots,w_{k-1}$ are all comparable to $w_j=w_k$ in $P$}; \\
 i_k+1 &\text{if
one of  $w_{j+1},w_{j+2},\dots,w_{k-1}$ is incomparable to $w_j=w_k$ in $P$}.
\end{cases}
\]
In turn, we recover $D$ as the directed graph with vertex set $\{v_1,v_2,\dots,v_m\}$
that has a directed edge $v_j \to v_k$ whenever either $w_j = w_k$ and $i_j > i_k$,
or $\{w_j,w_k\} \in E(G)$ and $\{ l \in [m] : v_l = v_j\} \prec \{ l \in [m] : v_l = v_k\}$.

Combining \eqref{al-eq} and \eqref{desv-eq} with this observation turns 
\eqref{sumsum-eq} into
$\qX_{G}(q) = \sum_{w \in \bS_n} q^{\inv_G(w^\r) } \bL_{\ell(w^\r), S(w^\r,P)}$
which is equivalent to the desired formula.
%
\end{proof}

Unlike $X_G(q)$, the power series $\qX_G(q)$ is rarely symmetric.
However, we can exactly  characterize the ordered graphs $G$ for which $\qX_G(q) \in \mSym$.

\begin{theorem}\label{cluster-thm}
One has $\qX_G(q) \in \mSym[q]$ if and only if $G$ is a \defn{cluster graph},
that is, a  disjoint union of complete graphs.
\end{theorem}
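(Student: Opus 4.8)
The plan is to prove the two implications separately, disposing of the easy ``if'' direction first and then attacking the harder ``only if'' direction by contraposition.

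For the ``if'' direction, suppose $G = K_{n_1} \sqcup K_{n_2} \sqcup \cdots \sqcup K_{n_l}$ is a disjoint union of complete graphs. Since $\qX$ is multiplicative over disjoint unions (as noted after Definition~\ref{or-def}), we have $\qX_G(q) = \prod_{i=1}^l \qX_{K_{n_i}}(q)$. Example~\ref{or-k-ex} gives $\qX_{K_n}(q) = [n]_q!\sum_{r \geq n}\binom{r-1}{n-1}\bare_r$, which lies in $\mSym[q]$ because each $\bare_r$ is symmetric and $[n]_q! \in \ZZ[q]$. As $\mSym[q]$ is a ring, the product $\qX_G(q)$ is again in $\mSym[q]$, which proves this direction.

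For the ``only if'' direction I would argue the contrapositive: if $G$ is not a cluster graph, then $\qX_G(q) \notin \mSym[q]$. A graph is a disjoint union of complete graphs exactly when it contains no induced path on three vertices, so I may assume $G$ has an induced $P_3$ supported on vertices $a, b, c$ with edges $\{a,b\}, \{b,c\} \in E(G)$ and $\{a,c\} \notin E(G)$. Writing $n = |V(G)|$ and recalling that $f \in \mQSym$ is symmetric precisely when the coefficient of $M_\alpha$ depends only on the partition $\sort(\alpha)$, it suffices to exhibit two compositions with a common sort but distinct coefficients in $\qX_G(q)$. The natural candidates live one degree above the lowest (chromatic) layer: I would compare the coefficients of two rearrangements of $(2, 1^{n-1})$, say $M_{(2,1,\dots,1)}$ against $M_{(1,\dots,1,2)}$, which are the monomial coefficients recorded by the set-valued colorings in which a single vertex receives a two-element color set and all others receive singletons. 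Setting $q = 1$ recovers the ordinary kromatic symmetric function $\bX_G$, which is always symmetric, so these two coefficients must agree at $q = 1$; the discrepancy I am after therefore lives in the higher powers of $q$. (The lowest layer $X_G(q)$ alone is insufficient: for the $P_3$ whose center is the middle vertex, Table~\ref{qX-tab} shows $X_G(q)$ is already symmetric, so one genuinely must ascend to degree $n+1$, where the coefficient of $M_{211}$ is a different polynomial in $q$ from that of $M_{112}$.)

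To compute these two coefficients for a general $G$ I would extract the degree-$(n+1)$ part from the multifundamental description $\qX_G(q) = \sum_{D \in \MAO(G)} q^{\align(D)}\bGamma(D)$ of Proposition~\ref{MAO-prop2}; equivalently, and perhaps more transparently, I would work directly with the proper set-valued colorings in which exactly one vertex carries a two-element color set. In either formulation I would construct a color-reversing involution, in the spirit of the map $\kappa \mapsto \kappa^\ast$ used earlier in this section, that pairs the colorings contributing to $M_{(2,1^{n-1})}$ with those contributing to $M_{(1^{n-1},2)}$ whenever the local structure around the doubled vertex is a union of cliques. The main obstacle is precisely this isolation step: I must show that the asymmetry forced by the induced $P_3$ on $\{a,b,c\}$ survives and is not cancelled by the contributions of the remaining $n-3$ vertices. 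I expect to arrange the comparison so that those extra vertices enter both coefficients in a matched way, reducing the net difference to a local computation on $\{a,b,c\}$ governed only by the order-position of the center $b$ relative to $a$ and $c$. This should split into the three cases where $b$ is the smallest, middle, or largest of $\{a,b,c\}$, mirroring the three $P_3$ rows of Table~\ref{qX-tab}; in each case the local computation yields unequal $q$-polynomials, completing the contrapositive.
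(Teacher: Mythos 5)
Your ``if'' direction is correct and is exactly the paper's argument (multiplicativity over disjoint unions plus Example~\ref{or-k-ex}), modulo a small slip: the coefficient of $\bare_r$ in $\qX_{K_n}(q)$ is the Stirling number ${r-1 \brace n-1}$, not the binomial coefficient $\binom{r-1}{n-1}$. This does not affect the conclusion, since the $e_r$-expansion already shows $\qX_{K_n}(q) \in \mSym[q]$.

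The ``only if'' direction has a genuine gap, and it is the one you yourself flag: you never carry out the reduction of the comparison $[M_{(2,1^{n-1})}]\qX_G(q)$ versus $[M_{(1^{n-1},2)}]\qX_G(q)$ to a ``local computation'' on an induced $P_3$. That reduction is the entire content of the hard direction, and it is not clear it can be made to work as stated: the colorings contributing to these coefficients involve all $n$ vertices of $G$, the positions of the doubled color relative to the singleton colors are constrained globally by the ascent statistic and by which vertices are adjacent, and there is no a priori reason the contributions of the $n-3$ vertices outside the $P_3$ ``enter both coefficients in a matched way.'' You would also need the argument to work for every total order on $V(G)$, not just orders in which $a,b,c$ are consecutive. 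Moreover, even if the two coefficients you chose happened to coincide for some non-cluster graph, you would need a fallback pair of compositions, and your proposal offers none. The paper avoids all of this by taking a different route: it extracts two \emph{necessary} conditions from symmetry --- first, that $W_G = X_G(0)$ is symmetric, which forces $|\min G| = |\max G|$ by comparing $[M_{(p,1^{n-p})}]W_G$ with $[M_{(1^{n-p},p)}]W_G$ (Lemma~\ref{cluster-lem0}); second, that every independent set has size at most $|\max G|$, obtained by comparing $[M_{(1^n,s)}]\qX_G(0)$ with $[M_{(s,1^n)}]\qX_G(0)$ for $s$ the size of an independent set (Lemma~\ref{cluster-lem1}) --- and then proves by induction, peeling off $\min G$, that these two combinatorial conditions already force $G$ to be a cluster graph (Lemma~\ref{cluster-lem2}). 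Note in particular that the paper's degree-$(n+1)$ comparison uses compositions of the form $(1^n,s)$ with $s$ potentially large, not your fixed $(2,1^{n-1})$; this is a sign that the single coefficient pair you propose is unlikely to suffice on its own. As written, your proposal is a plausible plan but not a proof.
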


The proof of the theorem will use three lemmas. 
For an ordered graph $G$, let $\min G$ be the set of vertices 
$v \in V(G)$ such that if $\{v,w\} \in E(G)$ then $v<w$,
and let $\max G$ be the set of vertices $v \in V(G)$ such that if $\{u,v\} \in E(G)$ then $u<v$.
Both $\min G$ and $\max G$ are independent sets.
Also let $W_G := X_G(0) = \sum_{\kappa} x^\kappa$
where the sum is over proper colorings $\kappa :V(G)\to \PP$
with $\asc_G(\kappa) = \varnothing$.

\begin{lemma}\label{cluster-lem0}
If $W_G \in \Sym[q]$ then $|\min G|=|\max G|$.
\end{lemma}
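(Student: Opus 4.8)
The plan is to read off $|\min G|$ and $|\max G|$ from extremal coefficients in the monomial quasisymmetric expansion of $W_G$, using the standard fact that a quasisymmetric function $\sum_\alpha c_\alpha M_\alpha$ is symmetric precisely when $c_\alpha = c_{\alpha'}$ whenever $\alpha'$ is a rearrangement of $\alpha$. First I would record that every proper coloring $\kappa$ counted by $W_G$ assigns exactly one color to each of the $n := |V(G)|$ vertices, so that $W_G$ is homogeneous of degree $n$; moreover, for a composition $\alpha = (\alpha_1, \dots, \alpha_\ell)$ of $n$, the coefficient $c_\alpha$ of $M_\alpha$ counts the colorings $\kappa$ that have no ascent, use exactly $\ell$ distinct colors, and assign the $j$-th smallest of those colors to exactly $\alpha_j$ vertices.

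Next I would identify the vertices that can carry an extreme color. If $v$ receives the globally largest color of $\kappa$ and has a neighbor $u < v$, then the edge $\{u,v\}$ together with the absence of ascents and properness forces $\kappa(u) > \kappa(v)$, which is impossible; hence $v$ has no smaller neighbor, i.e. $v \in \min G$. Symmetrically, any vertex bearing the smallest color lies in $\max G$. Since $\min G$ and $\max G$ are independent sets, this shows the largest color can be used on at most $|\min G|$ vertices and the smallest on at most $|\max G|$ vertices, and both bounds are attained: for the largest color, assign it to all of $\min G$ and color the remaining vertices with distinct smaller colors that strictly decrease along the order $<$, which produces no ascent at all.

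With these observations I would argue by contradiction. The edgeless case is immediate, since then $\min G = \max G = V(G)$, so I may assume $G$ has an edge; then the order-minimal and order-maximal vertices show $\min G$ and $\max G$ are nonempty, and one checks $|\min G|, |\max G| < n$. Suppose, after possibly interchanging the roles of $\min$ and $\max$ (and correspondingly of smallest and largest colors), that $m := |\min G| > |\max G|$; note $m \ge 2$ since $|\max G| \ge 1$. The compositions $(1^{n-m}, m)$ and $(m, 1^{n-m})$ are then distinct rearrangements of one another. By the previous paragraph $c_{(1^{n-m}, m)} \ge 1$, because the largest color may be placed on all of $\min G$ and the rest colored injectively below it; whereas $c_{(m, 1^{n-m})} = 0$, since placing the smallest color on $m$ vertices would require $m \le |\max G| < m$. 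These coefficients differ, so $W_G$ is not symmetric, a contradiction; therefore $|\min G| = |\max G|$.

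I do not expect a genuine obstacle here: the argument is entirely elementary once the coefficient interpretation is set up. The only points demanding a little care are the verification that $c_{(1^{n-m}, m)}$ is strictly positive, which follows from the explicit decreasing-along-the-order coloring noted above, and the bookkeeping that guarantees the two chosen compositions are legitimate and distinct (which is where the inequalities $1 \le |\max G| < |\min G| = m$ and $m < n$ are used).
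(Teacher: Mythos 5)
Your proof is correct and follows essentially the same route as the paper: both identify that an ascent-free proper coloring can place its smallest color only on vertices of $\max G$ and its largest color only on vertices of $\min G$, exhibit an explicit ascent-free coloring realizing the extreme composition on the $\min G$ (resp.\ $\max G$) side, and then compare the coefficients of $M_{(1^{n-m},m)}$ and $M_{(m,1^{n-m})}$ to contradict symmetry. The only cosmetic difference is that you phrase the two cases as a ``without loss of generality'' symmetry while the paper writes both out explicitly; your extra checks that the two compositions are distinct ($m\ge 2$, $m<n$) are a welcome bit of care.
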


\begin{proof}
If $f = \sum_\alpha c_\alpha M_\alpha$ is a quasisymmetric function then let 
$[M_\alpha]f := c_\alpha$. Notice that this is also the coefficient of $x^\alpha = x_1^{\alpha_1} x_2^{\alpha_2}\cdots$ in $f$.

 Now let $p = |\min G|$, $m = |\max G|$, and $n = |G|$.
If $p>m$ then \[ [M_{(p,1^{n-p})}] W_G = 0 < [M_{(1^{n-p},p)}] W_G \]
since in this case 
there is no ascent-less proper coloring of $G$ that assigns the color $1$ to $p$ distinct vertices (as to have no ascents every 1-colored vertex must be in $\max G$), but there is an ascent-less proper coloring 
that  assigns the same color
$n-p+1$ to all $p$ minimal vertices while assigning the distinct colors $1,2,\dots,n-p$ to the non-minimal vertices
(if these vertices are totally ordered as $v_1>v_2>\dots>v_{n-p}$ then let vertex $v_i$ have color $i$).

Alternatively, if $p<m$ then \[  [M_{(1^{n-m},m)}] W_G = 0 < [M_{(m,1^{n-m})}] W_G \] 
since in this case 
there is no ascent-less proper coloring of $G$ that assigns the
distinct colors $1,2,\dots,n-m$ some set of vertices while assigning
the same color $n-m+1$ to the $m$ vertices that remain (as to have no ascents every $(n-m+1)$-colored vertex must be in $\min G$), but there is an ascent-less proper coloring 
that  assigns the same color
$1$ to all $m$ maximal vertices while assigning the distinct colors $2,3,\dots,n-m+1$ to the non-maximal vertices
(if these vertices are totally ordered as $v_1>v_2>\dots>v_{n-m}$ then let vertex $v_i$ have color $i+1$).

In either case, we have $[M_\alpha]W_G \neq [M_\beta]W_G$
for two strict compositions $\alpha$ and $\beta$ that sort to the same partition,
so
 $W_G$ is not symmetric.
We conclude that if $W_G \in \Sym$ is symmetric then $p=m$. 
\end{proof}

\begin{lemma}\label{cluster-lem1}
If $\qX_G(q) \in \mSym[q]$ then $W_G \in \Sym$
and every
independent subset of $G$ has size at most $ |\max G|$.
\end{lemma}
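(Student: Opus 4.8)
The plan is to establish the two conclusions by separate arguments, the first of which is routine and the second of which is the real content.

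For $W_G \in \Sym$, I would extract the component of $\qX_G(q)$ of lowest degree in the $x$-variables. Each summand $q^{\asc_G(\max\circ\kappa)}x^\kappa$ has $x$-degree $\sum_{v}|\kappa(v)|\geq |V(G)| = n$, with equality exactly when every $\kappa(v)$ is a singleton; restricting to singleton colorings recovers $\sum_\kappa q^{\asc_G(\kappa)}x^\kappa = X_G(q)$, the ordinary chromatic quasisymmetric function. Since $\qX_G(q)\in\mSym[q]$ means every coefficient of $q^j$ lies in $\mSym$, and since passing to a fixed homogeneous $x$-degree preserves symmetry, the degree-$n$ component $X_G(q)$ lies in $\Sym[q]$. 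Setting $q=0$ gives $W_G = X_G(0)\in\Sym$.

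For the independent-set bound, fix an independent set $I$ with $k:=|I|$ and work inside $\qX_G(q)|_{q=0}\in\mSym$. The key point is that the ascent statistic only reads $\max\circ\kappa$, so I can spread a common small color over all of $I$ without touching the maxima. Choosing a bijection $M\colon V(G)\to\{2,\dots,n+1\}$ that is strictly decreasing in the vertex order and setting $\kappa(v)=\{1,M(v)\}$ for $v\in I$ and $\kappa(v)=\{M(v)\}$ otherwise, I get a proper set-valued coloring (since $I$ is independent and the $M$-values are distinct) whose maxima equal $M$ and hence have no ascents; its monomial is $x_1^{k}x_2\cdots x_{n+1}$, of partition type $(k,1^{n})$. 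Thus $[M_{(k,1^n)}]\,\qX_G(q)|_{q=0}\neq 0$. I would then show the reversed monomial $x_1\cdots x_n x_{n+1}^{k}$, also of type $(k,1^n)$, has coefficient $0$ as soon as $k>|\min G|$: any coloring producing it uses the globally largest color $n+1$ exactly $k$ times, so the $k$ (pairwise nonadjacent) vertices carrying it all have $\max\kappa(v)=n+1$, and the ascent-free condition then forbids each from having a smaller neighbor, placing all of them in $\min G$. Comparing the two coefficients via symmetry forces $k\leq|\min G|$. Finally I would invoke the first conclusion together with Lemma~\ref{cluster-lem0} to get $|\min G|=|\max G|$, whence $k\leq|\max G|$.

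The main obstacle is conceptual rather than computational: the bound cannot be read off from $W_G\in\Sym$ alone, because the analogous statement for ordinary colorings is simply false — for the path on three vertices one has $W_G=e_3$, which is symmetric, although $|\max G|=1$ is exceeded by the independent set $\{1,3\}$. What rescues the argument is the set-valued structure, which lets a large independent set share the smallest color while the maxima independently realize a descending (ascent-free) pattern; the rigidity of the largest-color placement then breaks the symmetry. A secondary subtlety worth flagging is that this construction naturally yields the bound through $|\min G|$ rather than $|\max G|$, so the identity $|\min G|=|\max G|$ from Lemma~\ref{cluster-lem0} — and hence the first conclusion — is genuinely needed to reach the stated form. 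The steps requiring care are checking that the constructed coloring is proper and that both monomials have exactly type $(k,1^n)$.
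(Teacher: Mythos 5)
Your proposal is correct and follows essentially the same route as the paper: extract $W_G$ as the lowest-degree piece of $\qX_G(0)$, then compare the coefficients of $M_{(k,1^n)}$ and $M_{(1^n,k)}$ in $\qX_G(0)$ using exactly the coloring $\kappa(v)=\{1,M(v)\}$ on the independent set versus the rigidity of vertices carrying the largest color, and finally convert $|\min G|$ to $|\max G|$ via Lemma~\ref{cluster-lem0}. The only addition is your (correct) remark that $W_G\in\Sym$ alone would not suffice, which the paper does not make explicit but which is consistent with its argument.
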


\begin{proof}
As $W_G$ is the homogeneous component of degree $|G|$ in $\qX_G(0)$,  
if we have $\qX_G(q) \in \mSym$ then $W_G \in \Sym$.
Let $n:=|G|$. If $S$ is an independent subset of $G$ of size $s := |S| > |\min G|$,
then \[  [M_{(1^n,s)}]\qX_G(0) = 0 < [M_{(s,1^n)}] \qX_G(0) \]
since in this case 
there is no ascent-less proper set-valued coloring of $G$ that assigns its largest color to $s$ distinct vertices (as to have no ascents, each vertex
whose colors include the largest appearing color must be in $\min G$),
but if 
the vertices of $G$ are totally ordered as $v_1>v_2>\dots>v_n$ then the
set-valued coloring $\kappa : G \to \Set(\PP)$
with 
\[\kappa(v_i) = \begin{cases} \{1,i+1\}&\text{if }v_i \in S \\ 
\{i+1\} &\text{if }v_i \notin S\end{cases}
\] is ascent-less and proper, and this contributes at least one monomial term
$x_1^s x_2x_3\cdots x_{n+1}$ to the monomial-positive expansion of $\qX_G(0)$.

Thus if
$G$ has an independent subset with more than $ |\max G|$ elements then
$\qX_G(0)$ is not symmetric.
Therefore if $\qX_G(q) \in \mSym$, then $W_G$ is symmetric so
 $|\min G| = |\max G|$ by Lemma~\ref{cluster-lem0}, and
 $\qX_G(0)$ is symmetric so
every
independent subset of $G$ has size at most $|\min G| = |\max G|$.
\end{proof}

\begin{lemma}\label{cluster-lem2}
Suppose $W_G \in \Sym$
and every independent subset of $G$ has size at most $ |\max G|$.
Then $G$ is a cluster graph.
\end{lemma}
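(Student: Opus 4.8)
The plan is to prove the contrapositive in the form: if $G$ is not a cluster graph, then $W_G \notin \Sym$. I would use the standard fact that a graph is a disjoint union of complete graphs if and only if it has no induced path on three vertices $P_3$, so I assume $G$ contains an induced $P_3$. First I would extract the global consequences of the hypotheses. Since $\max G$ and $\min G$ are independent sets, $\alpha(G) \ge |\max G| = |\min G| = m$, where the middle equality is Lemma~\ref{cluster-lem0} applied to $W_G \in \Sym$; the hypothesis $\alpha(G) \le |\max G|$ then forces $\alpha(G) = m$, so that $\max G$ and $\min G$ are \emph{maximum} independent sets. The presence of an edge gives $n := |G| > m$, and the presence of an induced $P_3$ gives $m \ge 2$ (if $m = 1$ then $\alpha(G) = 1$ and $G$ is complete, hence $P_3$-free). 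Because the symmetric functions inside $\QSym$ are exactly the quasisymmetric functions whose monomial coefficients are invariant under interchanging two adjacent parts of a composition, it suffices to produce one composition $\beta$ and index $i$ with $[M_\beta] W_G \ne [M_{s_i \beta}] W_G$, where $s_i\beta$ swaps parts $i$ and $i+1$.

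Next I would compute two candidate coefficients, using that $[M_\gamma] W_G$ counts the proper colorings in which every edge is a strict descent (the smaller vertex in the order receives the larger color) and color $j$ is used exactly $\gamma_j$ times. The key placement facts are that a vertex receiving the globally smallest color has no larger neighbor, so the smallest color class lies in $\max G$, and dually the largest color class lies in $\min G$. Taking $\beta = (m, 1^{n-m})$, the smallest color class has size $m$ and lies in $\max G$, hence equals $\max G$; the edges from $\max G$ are then automatically descents, and $[M_\beta] W_G$ equals the number of ascent-less bijective colorings of $G|_{V(G)\setminus \max G}$. Taking the adjacent swap $s_1 \beta = (1, m, 1^{n-m-1})$, the unique smallest-color vertex $u$ lies in $\max G$, while the second color class $C$ has size $m = \alpha(G)$ and is therefore a maximum independent set omitting $u$; hence $C \ne \max G$, so $C$ is forced to use a near-maximal vertex $v \notin \max G$ whose only larger neighbor is $u$.

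The heart of the argument, and the step I expect to be the main obstacle, is to show these two counts genuinely differ. I would set up an injection from the $s_1\beta$-colorings into the $\beta$-colorings — morally, pushing the class $C$ together with $u$ down into the bottom color layer — and then exhibit a $\beta$-coloring with no preimage whose existence is forced by the induced $P_3$: reversing the recoloring across the middle edge of the $P_3$ would demand $1 > \kappa(u)$, which is impossible. This yields $[M_\beta] W_G \ne [M_{s_1 \beta}] W_G$, contradicting $W_G \in \Sym$, so $G$ can contain no induced $P_3$ and must be a cluster graph. The delicate point is that the discrepancy has to be certified uniformly: the three vertices of the $P_3$ can sit in several relative orders with respect to $\max G$ and $\min G$, so I anticipate a short case analysis, organized by the position of the middle vertex of the $P_3$, to verify that the extra bottom-layer configuration is truly unmatched. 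If the specific pair $(m,1^{n-m})$ versus $(1,m,1^{n-m-1})$ degenerates in some case, the same placement facts let one search among nearby swaps, so the method survives even when the cleanest witnessing composition changes.
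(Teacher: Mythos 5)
Your overall strategy is genuinely different from the paper's: you argue the contrapositive directly via an induced $P_3$ and a single monomial-coefficient comparison, whereas the paper proceeds by induction, first showing that $H := G|_{V(G)\setminus \min G}$ inherits both hypotheses (so is a cluster graph with $m=|\min G|$ components by induction), and then using several coefficient comparisons of the form $[M_{(1^p,m)}]W_G$ versus $[M_{(1^{p-1},m,1)}]W_G$ and $[M_{(1^a,m,1^b)}]W_G$ to pin down exactly how $\min G$ attaches to $H$. Your preliminary reductions are all sound: $\alpha(G)=|\max G|=|\min G|=m$, the placement facts (the smallest color class sits in $\max G$, the largest in $\min G$), the identification of $[M_{(m,1^{n-m})}]W_G$ with ascent-less bijective colorings of $G|_{V(G)\setminus\max G}$, and the observation that for $(1,m,1^{n-m-1})$ the second color class $C$ is a maximum independent set different from $\max G$.

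However, there is a genuine gap at exactly the step you flag as the main obstacle, and it is not a cosmetic one. The injection from $(1,m,1^{n-m-1})$-colorings to $(m,1^{n-m})$-colorings is never defined, and the ``moral'' description does not type-check: $C\cup\{u\}$ has $m+1$ elements while the bottom layer of $(m,1^{n-m})$ has $m$; and the natural repair (give $C$ color $1$ and $u$ color $2$) fails to be ascent-less precisely because $C$ contains a vertex $v\notin\max G$, which is the very feature you isolated. Likewise, the ``unmatched $\beta$-coloring forced by the induced $P_3$'' is asserted, not exhibited, and the promised case analysis over the positions of the $P_3$ relative to $\max G$ and $\min G$ is not carried out; you even concede that the witnessing pair of compositions may have to change in some cases. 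So what you have is a plausible plan whose decisive combinatorial step is missing. For comparison, the paper avoids having to certify a discrepancy uniformly across all placements of a $P_3$ by inducting on $|\min G|$: once $H$ is known to be a cluster graph, the only remaining freedom is how $\min G$ attaches to the components of $H$, and each bad attachment pattern is killed by a targeted coefficient inequality. To complete your route you would need to either construct the injection and the unmatched coloring explicitly in every case, or fall back on an inductive framework like the paper's.
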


\begin{proof}
Suppose $G$ has an isolated vertex $v$. Let $K$ be the subgraph formed by removing this vertex.
Then 
$W_G = e_1 W_K$ so $W_K$ is symmetric, and if $S$ is an independent subset of $K$
then $|S| \leq |\max K|$ since $S \sqcup \{v\}$ is an independent subset of $G$ and $\max G  = \max K \sqcup \{v\}$. In this case, we may assume by induction that $K$ is a cluster graph, and then $G$ is also a cluster graph.

Assume $G$ has no isolated vertices so that $\min G$ and $\max G$ are disjoint.
Let $m = |\min G|$ and 
define $H$ to be the subgraph $G$ induced on the vertex set $V(G) \setminus \min G$.
If $m=0$ then $G$ is empty so there is nothing to prove. Therefore, assume $m>0$ so that $H$
is a proper subgraph.

If  $\alpha = (\alpha_1,\alpha_2,\dots,\alpha_l)$
is any strict composition of $|H|$ then
 \[[M_\alpha] W_H = [M_{(\alpha_1,\alpha_2,\dots,\alpha_l,m)}] W_G\]
 since in 
 any ascent-less proper coloring of $G$ assigning the color $i$ to $\alpha_i$
 vertices 
 and the color $l+1$ to $m$ vertices, 
 the set of vertices colored by $l+1$ must be exactly $\min G$, 
  so restriction defines a bijection from such colorings to 
 ascent-less proper colorings of $H$ that assign the color $i$ to $\alpha_i$
 vertices.
 Since $W_G \in \Sym$, it follows that $W_H \in \Sym$.
 
Since $G$ has no isolated vertices, one has
 $\max H = \max G$. Therefore $|\min H| = |\max H| = |\min G| = |\max G|=m$. If $S$ is an independent subset of $H$ then $S$ is also an independent subset of $G$,
 so $|S| \leq |\max G| = |\max H|$. 
Thus, we may assume by induction that $H$ is a disjoint union of $m$ complete graphs. 

We now make some observations.
Each element of $\min H$ must be connected by an edge in $G$ to at least one element of $\min G$
since $\min H$ and $\min G$ are disjoint. 
Any two elements of $\min H$ must be connected by edges in $G$ to at least two different elements of $\min G$,
since if $b,c \in \min H$ were connected to the same element $a \in \min G$ and to no other elements of $\min G$,
then $(\min G \setminus\{a\}) \sqcup \{b,c\}$
would be an independent subset of $G$ of the impossible size $ |\min G| + 1=  |\max G|+1$.
Therefore, if  $p =|H| = |G| - m$, and $k_1,k_2,\dots,k_m$ are the sizes of the $m$ connected components of $H$, then 
\[ [M_{(1^p, m)}] W_G = \tbinom{p}{k_1,k_2,\dots,k_m} \geq [M_{(1^{p-1},m,1)}] W_G\]
with equality if and only if  
each element of $\min H$ is connected by an edge in $G$ to exactly one element of $\min G$. 
This last property must hold as $W_G$ is symmetric.

Let $\min G = \{u_1,u_2,\dots,u_m\}$ and $\min H = \{v_1,v_2,\dots,v_m\}$.
By the properties in the paragraph above, we may assume that 
$\{u_i,v_i\} \in E(G)$ for each $i \in [m]$ and that $\{u_i,v_j\} \notin E(G)$ for $i\neq j$.
For each $i \in [m]$, every vertex $w \in V(H)$ with $\{v_i,w\} \in E(G)$
must have $\{u_i,w) \in E(G)$, since otherwise $(\min H\setminus \{v_i\})\sqcup\{u_i,w\}$ would be 
an independent subset of $G$ of the impossible size $m + 1 = |\max G| +1$.

To conclude that $G$ is a cluster graph, suppose $w \in V(H)$  has $\{v_j,w\} \in E(G)$.
We must show that $\{u_i,w\} \notin E(G)$ for all $i\neq j$.
This is trivial if $m=1$, and if $m>1$  
then no such  edge can exist as then the equalities 
\[
[M_{(1^p, m)}] W_G
=
[M_{(1^{a}, m,1^b)}] W_G
=
[M_{(m,1^p)}] W_G
\] cannot hold for all $a,b \in \PP$ with $a+b=p$,
contradicting $W_G \in \Sym$.
 \end{proof}

\begin{proof}[Proof of Theorem~\ref{cluster-thm}]
If $G$ is a disjoint union of $m$ cluster graphs of sizes $n_1,n_2,\dots,n_m$,
then $\qX_G(q) = \prod_{i=1}^m \qX_{K_{n_i}}(q) \in \mSym[q]$ by
Example~\ref{or-k-ex}.
If $\qX_G(q)\in \mSym[q]$, then $G$ is a cluster graph by Lemmas~\ref{cluster-lem1}
and \ref{cluster-lem2}.
\end{proof}

\subsection{Another quasisymmetric analogue} 
\label{qana-sect2}

The preceding results indicate that $\qX_G(q)$ is an interesting
quasisymmetric $q$-analogue of $\bX_G$ and $K$-theoretic analogue of $X_G(q)$. 
However, there is another candidate for such a generalization.
Continue to let $G$ be an ordered graph.
Following \cite{Hwang}, an \defn{ascent} (respectively, \defn{descent}) 
of a set-valued map $\kappa : V(G) \to \Set(\PP)$ is 
a tuple $(u,v,i,j)$ with  $\{u,v\} \in E(G)$, $i \in \kappa(u)$, $j\in \kappa(v)$, $u<v$, and $i<j$ (respectively, $i>j$).
Let $\asc_{G}(\kappa)$ denote the number of ascents,
and let  $\des_{G}(\kappa)$ be the number of descents.

\begin{definition}
Given an ordered graph $G$, define
\[ \oX_G(q) = \sum_{\kappa} q^{\asc_{G}(\kappa)} x^\kappa   \in \mQSym\llbracket q \rrbracket
 \]
where the sum is over all proper set-valued colorings $\kappa : V(G) \to \Set(\PP)$.
\end{definition}

As  with $\qX_G(q)$, it is clear that if $G=G_1\sqcup G_2$ is the disjoint union of two ordered graphs, then we have a factorization
$
 \oX_G(q) = \oX_{G_1}(q) \oX_{G_2}(q).
$

The power series $\oX_G(q)$ is closely related to the quasisymmetric functions 
$X_G(\x,q,\mu)$ studied in \cite{Hwang}.
For each map $\mu : V(G) \to \NN$,   Hwang \cite{Hwang} defines
\be X_G(\x,q,\mu) := \sum_\kappa q^{\asc_{G}(\kappa)} x^\kappa\ee where the sum is over all 
proper set-valued colorings $\kappa$ of $G$ with $|\kappa(v)| = \mu(v)$.
Evidently $\oX_G(q) = \sum_{\mu : V(G) \to \PP} X_G(\x,q,\mu)$, and, as noted in \cite[Rem.~2.2]{Hwang},
\be\label{this-eq}
\textstyle
X_G(\x,q,\mu) = \tfrac{1}{[\mu]_q!} X_{\Cl_\mu(G)}(q)
\quad\text{where $[\mu]_q ! := \prod_{v \in V(G)} [\mu(v)]_q!.$}\ee
Here,  we view $\Cl_\mu(G)$ as an ordered graph with $(v,i) < (w,j)$ if either $v<w$ or $v=w$ and $i<j$.

Using \eqref{this-eq}, various positive or alternating expansions of $X_G(q)$
(e.g., into fundamental quasisymmetric functions \cite[Thm.~3.1]{SW2016}, Schur functions \cite[Thm.~6.3]{SW2016},
power sum symmetric functions \cite[Thm.~3.1]{At2015}, or elementary symmetric functions \cite[Conj.~5.1]{SW2016})
can be extended in a straightforward way to $X_G(\x,q,\mu) $ and $\oX_G(q)$.
See Hwang's results
\cite[Thms.~3.3, 4.10, and 4.19]{Hwang}  and his conjecture \cite[Conj.~3.10]{Hwang}.

Some of these statements require $G$ to be isomorphic to the incomparability graph of a \defn{natural unit interval order}, meaning a poset $P$ on a finite subset of $\PP$ such that if  $x<_P z$
then $x<z$ and every $y$ incomparable in $P$ to both $x$ and $z$ has $x<y<z$ \cite[Prop.~4.1]{SW2016}. 

\begin{remark}
A finite poset is isomorphic to a (unique) natural unit interval order if and only if it is both $(3+1)$- and $(2+2)$-free \cite[Prop.~4.2]{SW2016}.
This does not mean that every $(3+1)$- and $(2+2)$-free poset on a finite subset of $\PP$ is a natural unit interval order. For example, the poset $P=\{ 1<2\} \sqcup\{ 3\}$ is $(3+1)$- and $(2+2)$-free but not a natural unit interval order; however, it is isomorphic to the  natural unit interval order $Q = \{1<3\} \sqcup\{2\}$.
\end{remark}

If $G$ is the incomparability graph of a natural unit interval order,
then so are all of its $\alpha$-clans, and the following holds:

\begin{lemma}\label{des-nuo-lem}
If $G$ is the incomparability graph of a natural unit order interval,  then  $\oX_G(q)$ is symmetric and 
$\oX_G(q) = \sum_{\kappa} q^{\des_{G}(\kappa)} x^\kappa $
where the sum is over all proper set-valued colorings $\kappa : V(G) \to \Set(\PP)$.
\end{lemma}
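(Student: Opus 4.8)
The plan is to reduce both assertions to known facts about the ordinary chromatic quasisymmetric function $X_H(q)$ of Shareshian and Wachs, applied to the clan graphs $H=\Cl_\mu(G)$, using the decomposition $\oX_G(q)=\sum_{\mu:V(G)\to\PP}X_G(\x,q,\mu)$ together with the identity \eqref{this-eq}. The one structural input I would invoke is the fact recalled just above the lemma, that every clan $\Cl_\mu(G)$ of a natural unit interval order is again the incomparability graph of a natural unit interval order (viewed as an ordered graph with $(v,i)<(w,j)$ iff $v<w$, or $v=w$ and $i<j$). Hence each $X_{\Cl_\mu(G)}(q)$ inherits the good behaviour of $X_H(q)$ from \cite{SW2016}; equivalently one may cite Hwang's refinements \cite{Hwang} of these statements directly for $X_G(\x,q,\mu)$.

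For symmetry, I would argue that each summand is symmetric. By \eqref{this-eq} we have $X_G(\x,q,\mu)=\tfrac{1}{[\mu]_q!}X_{\Cl_\mu(G)}(q)$, where $X_{\Cl_\mu(G)}(q)$ is symmetric by \cite{SW2016} and $[\mu]_q!$ is a nonzero scalar depending only on $q$; so $X_G(\x,q,\mu)\in\mSym[q]$. Since only finitely many colorings $\kappa$ contribute to any fixed monomial, the sum over all $\mu$ is a locally finite sum of symmetric functions and therefore $\oX_G(q)$ is symmetric, i.e.\ lies in $\mSym\llbracket q\rrbracket$.

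For the descent identity, the key observation is that on proper set-valued colorings of a fixed type $\mu$ the ascent and descent counts are complementary: since $\kappa(u)\cap\kappa(v)=\varnothing$ on each edge, every pair $(i,j)\in\kappa(u)\times\kappa(v)$ with $u<v$ is either an ascent or a descent, so $\asc_G(\kappa)+\des_G(\kappa)=N(\mu):=\sum_{\{u,v\}\in E(G),\,u<v}\mu(u)\mu(v)$, a constant on the type. Consequently
\[
\sum_{\kappa:\,|\kappa(v)|=\mu(v)} q^{\des_G(\kappa)}x^\kappa \;=\; q^{N(\mu)}\,X_G(\x,q^{-1},\mu),
\]
and it suffices to show that each $X_G(\x,q,\mu)$ is palindromic in $q$ of degree $N(\mu)$; for then the right-hand side equals $X_G(\x,q,\mu)$, and summing over $\mu$ yields the claimed formula. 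This palindromicity transfers through \eqref{this-eq}: $X_{\Cl_\mu(G)}(q)$ is palindromic of degree $|E(\Cl_\mu(G))|$ by \cite{SW2016}, while $[\mu]_q!=\prod_v[\mu(v)]_q!$ is palindromic of degree $\sum_v\binom{\mu(v)}{2}$; since $|E(\Cl_\mu(G))|=N(\mu)+\sum_v\binom{\mu(v)}{2}$, the quotient $X_G(\x,q,\mu)$ is palindromic of degree exactly $N(\mu)$.

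The step I expect to require the most care is this transfer of palindromicity: one must confirm both that $X_{\Cl_\mu(G)}(q)$ is palindromic — which is exactly where the natural-unit-interval-order hypothesis is used, via \cite{SW2016} (or via Hwang's direct palindromicity statement for $X_G(\x,q,\mu)$ in \cite{Hwang}) — and that the degree bookkeeping $|E(\Cl_\mu(G))|=N(\mu)+\sum_v\binom{\mu(v)}{2}$ matches, so that dividing by the palindromic polynomial $[\mu]_q!$ preserves palindromicity and produces precisely degree $N(\mu)$. The remaining manipulations (the complementarity of ascents and descents, and the passage from fixed type $\mu$ to the full sum) are routine.
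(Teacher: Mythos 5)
Your proposal is correct and follows essentially the same route as the paper: the paper's proof simply cites Hwang's Thm.~3.8 for the symmetry of each $X_G(\x,q,\mu)$ and his Prop.~2.1 for the ascent/descent identity, which are exactly the two facts you re-derive from the Shareshian--Wachs results via the clan-graph identity \eqref{this-eq}. Your bookkeeping $|E(\Cl_\mu(G))|=N(\mu)+\sum_v\binom{\mu(v)}{2}$ and the transfer of palindromicity through division by $[\mu]_q!$ are correct, so your version is a sound, more self-contained substitute for those citations.
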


\begin{proof}
In this case $X_G(\x,q,\mu)$ is symmetric by  \cite[Thm.~3.8]{Hwang},
 so the same is true of $\oX_G(q)$, and the alternate formula for $\oX_G(q)$ holds by
 \cite[Prop.~2.1]{Hwang}.
\end{proof}

Some sample computations of $\oX_G(q)$ are shown in Table~\ref{oX-tab}.
One of the few cases where we have explicit formulas is discussed in the following example.

 \begin{table}[htb]
\centerline{
\scalebox{1}{
\begin{tabular}{|l|l|l|}
\hline & & 
\\[-8pt]
Graph $G$   & $\bX_G(q)$ in the monomial basis $\{M_\alpha\}$ & $\bX_G(q)$ in the multifundamental basis $\{\bL_\alpha\}$
\\[-8pt] & & 
\\ \hline & & 
\\[-8pt]  
  \begin{tikzpicture}[scale=1,baseline=(void.base)]
  \node at (0, 0)(1){2};
    \node at (1, 0)(2){3};
    \node at (0.5, 0.866)(3){1};
    \node at (0.5,0.433)(void) {};
    \draw[black, thick] (3) -- (1);
    \draw[black, thick] (2) -- (1);
  \end{tikzpicture} 
  &  \pbox[c]{7cm}{\scriptsize$(1 + 4q + q^2) M_{111} + q M_{12} + q M_{21} + (3 + 12q + 16q^2 + 4q^3 + q^4) M_{1111} + (2q + 3q^2) M_{112} + (2q + 3q^2) M_{121} + (2q + 3q^2) M_{211}+ \dots$} 
  & \pbox[c]{7cm}{\scriptsize $(1 + 2q + q^2) \bL_{111} + q \bL_{12} + q \bL_{21} + (4q^2 + 4q^3 + q^4) \bL_{1111} + 3q^2 \bL_{112} + 3q^2 \bL_{121} + 3q^2 \bL_{211} + \dots $
}
\\ [-4pt] && \\
  \begin{tikzpicture}[scale=1,baseline=(void.base)]
  \node at (0, 0)(1){3};
    \node at (1, 0)(2){2};
    \node at (0.5, 0.866)(3){1};
    \node at (0.5,0.433)(void) {};
    \draw[black, thick] (3) -- (1);
    \draw[black, thick] (2) -- (1);
  \end{tikzpicture} 
  &  \pbox[c]{7cm}{\scriptsize$(2 + 2q + 2q^2) M_{111} + M_{12} + q^2 M_{21} + (8 + 8q + 10q^2 + 8q^3 + 2q^4) M_{1111} + (3 + 2q) M_{112} + (2 + q^2 + 2q^3) M_{121} + (2q^2 + 2q^3 + q^4) M_{211}+ \dots$} 
  &  \pbox[c]{7cm}{\scriptsize$(1 + 2q + q^2) \bL_{111} + \bL_{12} + q^2 \bL_{21} + (4q^2 + 4q^3 + q^4) \bL_{1111} + (1 + 2q) \bL_{112} + (1 + 2q^3) \bL_{121} + (2q^3 + q^4) \bL_{211} +\dots$}
\\ [-4pt] && \\
  \begin{tikzpicture}[scale=1,baseline=(void.base)]
  \node at (0, 0)(1){1};
    \node at (1, 0)(2){3};
    \node at (0.5, 0.866)(3){2};
    \node at (0.5,0.433)(void) {};
    \draw[black, thick] (3) -- (1);
    \draw[black, thick] (2) -- (1);
  \end{tikzpicture} 
  &  \pbox[c]{7cm}{\scriptsize$(2 + 2q + 2q^2) M_{111} + q^2 M_{12} + M_{21} + (8 + 8q + 10q^2 + 8q^3 + 2q^4) M_{1111} + (2q^2 + 2q^3 + q^4) M_{112} + (2 + q^2 + 2q^3) M_{121} + (3 + 2q) M_{211}+ \dots$} 
  & \pbox[c]{7cm}{\scriptsize $(1 + 2q + q^2) \bL_{111} + q^2 \bL_{12} + \bL_{21} + (4q^2 + 4q^3 + q^4) \bL_{1111} + (2q^3 + q^4) \bL_{112} + (1 + 2q^3) \bL_{121} + (1 + 2q) \bL_{211}  + \dots $}
 \\[-8pt] & & 
 \\ \hline & & 
\\[-8pt]  
    \begin{tikzpicture}[scale=1,baseline=(void.base)]
  \node at (0, 0)(1){2};
    \node at (1, 0)(2){3};
    \node at (0.5, 0.866)(3){1};
    \node at (0.5,.433)(void) {};
    \draw[black, thick] (3) -- (1);
    \draw[black, thick] (2) -- (1);
    \draw[black, thick] (2) -- (3);
  \end{tikzpicture} & 
    \pbox[c]{7cm}{\scriptsize $(1 + 2q + 2q^2 + q^3) M_{111} + (3 + 6q + 9q^2 + 9q^3 + 6q^4 + 3q^5) M_{1111}+ \dots $ }  &
  \pbox[c]{7cm}{\scriptsize$(1 + 2q + 2q^2 + q^3) \bL_{111} + (3q^2 + 6q^3 + 6q^4 + 3q^5) \bL_{1111} + \dots $}
   \\[-8pt] & & 
    \\ \hline & & 
\\[-8pt]  
   \begin{tikzpicture}[scale=1,baseline=(void.base)]
  \node at (0, 0)(1){1};
    \node at (1, 0)(2){2};
    \node at (1, 1)(3){3};
     \node at (0, 1)(4){4};
         \node at (0.5,.5)(void) {};
    \draw[black, thick] (3) -- (2);
    \draw[black, thick] (2) -- (1);
    \draw[black, thick] (4) -- (3);
    \draw[black, thick] (1) -- (4); 
  \end{tikzpicture} & 
    \pbox[c]{7cm}{\scriptsize $(1 + 8q + 6q^2 + 8q^3 + q^4) M_{1111} + (2q + 2q^3) M_{112} + (2q + 2q^3) M_{121} + (2q + 2q^3) M_{211} + (q + q^3) M_{22} + (4 + 32q + 56q^2 + 56q^3 + 56q^4 + 32q^5 + 4q^6) M_{11111} + (6q + 9q^2 + 6q^3 + 9q^4 + 6q^5) M_{1112} + (6q + 9q^2 + 6q^3 + 9q^4 + 6q^5) M_{1121} + (6q + 9q^2 + 6q^3 + 9q^4 + 6q^5) M_{1211} + (6q + 9q^2 + 6q^3 + 9q^4 + 6q^5) M_{2111} + (q + 2q^2 + 2q^3 + 2q^4 + q^5) M_{122} + (2q + 2q^2 + 2q^4 + 2q^5) M_{212} + (q + 2q^2 + 2q^3 + 2q^4 + q^5) M_{221}+ \dots $ }  & 
   \pbox[c]{7cm}{\scriptsize $(1 + 3q + 6q^2 + 3q^3 + q^4) \bL_{1111} + (q + q^3) \bL_{112} + (2q + 2q^3) \bL_{121} + (q + q^3) \bL_{211} + (q + q^3) \bL_{22} + (2q^2 + 24q^3 + 22q^4 + 12q^5 + 4q^6) \bL_{11111} + (5q^2 + q^3 + 5q^4 + 3q^5) \bL_{1112} + (7q^2 - q^3 + 7q^4 + 5q^5) \bL_{1121} + (7q^2 - q^3 + 7q^4 + 5q^5) \bL_{1211} + (5q^2 + q^3 + 5q^4 + 3q^5) \bL_{2111} + (2q^2 + q^3 + 2q^4 + q^5) \bL_{122} + (2q^2 - 2q^3 + 2q^4 + 2q^5) \bL_{212} + (2q^2 + q^3 + 2q^4 + q^5) \bL_{221} + \dots$}
\\ [-4pt] && \\
      \begin{tikzpicture}[scale=1,baseline=(void.base)]
  \node at (0, 0)(1){1};
    \node at (1, 0)(2){2};
    \node at (1, 1)(3){4};
     \node at (0, 1)(4){3};
         \node at (0.5,.5)(void) {};
    \draw[black, thick] (3) -- (2);
    \draw[black, thick] (2) -- (1);
    \draw[black, thick] (4) -- (3);
    \draw[black, thick] (1) -- (4); 
  \end{tikzpicture} & 
    \pbox[c]{7cm}{\scriptsize $(2 + 4q + 12q^2 + 4q^3 + 2q^4) M_{1111} + 4q^2 M_{112} + (1 + 2q^2 + q^4) M_{121} + 4q^2 M_{211} + 2q^2 M_{22} + (10 + 20q + 50q^2 + 80q^3 + 50q^4 + 20q^5 + 10q^6) M_{11111} + (9q^2 + 18q^3 + 9q^4) M_{1112} + (3 + 2q + 6q^2 + 14q^3 + 6q^4 + 2q^5 + 3q^6) M_{1121} + (3 + 2q + 6q^2 + 14q^3 + 6q^4 + 2q^5 + 3q^6) M_{1211} + (9q^2 + 18q^3 + 9q^4) M_{2111} + (2q^2 + 4q^3 + 2q^4) M_{122} + (2q^2 + 4q^3 + 2q^4) M_{212} + (2q^2 + 4q^3 + 2q^4) M_{221}+ \dots $ }  & 
   \pbox[c]{7cm}{\scriptsize $(1 + 4q + 4q^2 + 4q^3 + q^4) \bL_{1111} + 2q^2 \bL_{112} + (1 + 2q^2 + q^4) \bL_{121} + 2q^2 \bL_{211} + 2q^2 \bL_{22} + (10q^2 + 12q^3 + 22q^4 + 16q^5 + 4q^6) \bL_{11111} + (-q^2 + 10q^3 + 5q^4) \bL_{1112} + (1 + 2q - 2q^2 + 10q^3 + 2q^4 + 2q^5 + 3q^6) \bL_{1121} + (1 + 2q - 2q^2 + 10q^3 + 2q^4 + 2q^5 + 3q^6) \bL_{1211} + (-q^2 + 10q^3 + 5q^4) \bL_{2111} + (4q^3 + 2q^4) \bL_{122} + (-2q^2 + 4q^3 + 2q^4) \bL_{212} + (4q^3 + 2q^4) \bL_{221} + \dots$}
\\ [-4pt] && \\
      \begin{tikzpicture}[scale=1,baseline=(void.base)]
  \node at (0, 0)(1){1};
    \node at (1, 0)(2){3};
    \node at (1, 1)(3){2};
     \node at (0, 1)(4){4};
         \node at (0.5,.5)(void) {};
    \draw[black, thick] (3) -- (2);
    \draw[black, thick] (2) -- (1);
    \draw[black, thick] (4) -- (3);
    \draw[black, thick] (1) -- (4); 
  \end{tikzpicture} & 
    \pbox[c]{7cm}{\scriptsize $(4 + 4q + 8q^2 + 4q^3 + 4q^4) M_{1111} + (2 + 2q^4) M_{112} + 4q^2 M_{121} + (2 + 2q^4) M_{211} + (1 + q^4) M_{22} + (24 + 24q + 48q^2 + 48q^3 + 48q^4 + 24q^5 + 24q^6) M_{11111} + (10 + 4q + 4q^2 + 4q^4 + 4q^5 + 10q^6) M_{1112} + (4 + 10q^2 + 8q^3 + 10q^4 + 4q^6) M_{1121} + (4 + 10q^2 + 8q^3 + 10q^4 + 4q^6) M_{1211} + (10 + 4q + 4q^2 + 4q^4 + 4q^5 + 10q^6) M_{2111} + (2 + 2q^2 + 2q^4 + 2q^6) M_{122} + (4 + 4q^6) M_{212} + (2 + 2q^2 + 2q^4 + 2q^6) M_{221}+ \dots $ }  & 
   \pbox[c]{7cm}{\scriptsize $(1 + 4q + 4q^2 + 4q^3 + q^4) \bL_{1111} + (1 + q^4) \bL_{112} + 4q^2 \bL_{121} + (1 + q^4) \bL_{211} + (1 + q^4) \bL_{22} + (8q^2 + 16q^3 + 20q^4 + 16q^5 + 4q^6) \bL_{11111} + (1 + 4q + 2q^2 - q^4 + 4q^5 + 4q^6) \bL_{1112} + (1 + 8q^3 + 7q^4 + 2q^6) \bL_{1121} + (1 + 8q^3 + 7q^4 + 2q^6) \bL_{1211} + (1 + 4q + 2q^2 - q^4 + 4q^5 + 4q^6) \bL_{2111} + (1 + 2q^2 + q^4 + 2q^6) \bL_{122} + (2 - 2q^4 + 4q^6) \bL_{212} + (1 + 2q^2 + q^4 + 2q^6) \bL_{221}+ \dots$}
   \\[-8pt] & & 
   \\ \hline & & 
\\[-8pt]  
\begin{tikzpicture}[scale=1,baseline=(void.base)]
  \node at (0, 0)(1){1};
    \node at (1, 0)(2){2};
    \node at (1, 1)(3){3};
     \node at (0, 1)(4){4};
    \node at (0.5,0.5)(void){};
    \draw[black, thick] (3) -- (1);
    \draw[black, thick] (2) -- (1);
    \draw[black, thick] (4) -- (1);
    \draw[black, thick] (4) -- (3);
    \draw[black, thick] (2) -- (4); 
  \end{tikzpicture} & 
  \pbox[c]{7cm}{\scriptsize $(2 + 4q + 6q^2 + 6q^3 + 4q^4 + 2q^5) M_{1111} + (q^2 + q^3) M_{112} + (1 + q^5) M_{121} + (q^2 + q^3) M_{211} + (10 + 20q + 34q^2 + 44q^3 + 48q^4 + 38q^5 + 28q^6 + 14q^7 + 4q^8) M_{11111} + (3q^2 + 5q^3 + 6q^4 + 3q^5 + q^6) M_{1112} + (3 + 2q + q^2 + 3q^3 + 2q^4 + q^5 + 3q^6 + 2q^7 + q^8) M_{1121} + (3 + 2q + q^2 + 3q^3 + 2q^4 + q^5 + 3q^6 + 2q^7 + q^8) M_{1211} + (3q^2 + 5q^3 + 6q^4 + 3q^5 + q^6) M_{2111}+\dots$  } & 
  \pbox[c]{7cm}{\scriptsize $(1 + 4q + 4q^2 + 4q^3 + 4q^4 + q^5) \bL_{1111} + (q^2 + q^3) \bL_{112} + (1 + q^5) \bL_{121} + (q^2 + q^3) \bL_{211} + (10q^2 + 12q^3 + 16q^4 + 26q^5 + 20q^6 + 10q^7 + 2q^8) \bL_{11111} + (2q^3 + 6q^4 + 3q^5 + q^6) \bL_{1112} + (1 + 2q + 2q^3 + 2q^4 - q^5 + 3q^6 + 2q^7 + q^8) \bL_{1121} + (1 + 2q + 2q^3 + 2q^4 - q^5 + 3q^6 + 2q^7 + q^8) \bL_{1211} + (2q^3 + 6q^4 + 3q^5 + q^6) \bL_{2111}+\dots$} 
   \\[-8pt] & & 
   \\ \hline & & 
\\[-8pt]  
  \begin{tikzpicture}[scale=1,baseline=(void.base)]
  \node at (0, 0)(1){1};
    \node at (1, 0)(2){2};
    \node at (1, 1)(3){3};
     \node at (0, 1)(4){4};
    \node at (0.5,0.5)(void){};
    \draw[black, thick] (3) -- (1);
    \draw[black, thick] (2) -- (1);
    \draw[black, thick] (1) -- (4);
  \end{tikzpicture} 
 & \pbox[c]{7cm}{\scriptsize $(6 + 6q + 6q^2 + 6q^3) M_{1111} + (3q^2 + 3q^3) M_{112} + (3 + 3q^3) M_{121} + (3 + 3q) M_{211} + q^3 M_{13} + M_{31} + (42 + 42q + 48q^2 + 48q^3 + 48q^4 + 6q^5 + 6q^6) M_{11111} + (15q^2 + 15q^3 + 18q^4 + 3q^5 + 3q^6) M_{1112} + (15 + 3q^2 + 18q^3 + 15q^4 + 3q^6) M_{1121} + (18 + 15q + 3q^3 + 18q^4) M_{1211} + (18 + 18q + 18q^2) M_{2111} + (3q^3 + 3q^4 + q^6) M_{113} + (3 + q^3 + 3q^4) M_{131} + (4 + 3q) M_{311} + 6q^4 M_{122} + 6q^2 M_{212} + 6 M_{221}+\dots$ }
 &  \pbox[c]{7cm}{\scriptsize $(1 + 3q + 3q^2 + q^3) \bL_{1111} + (3q^2 + 2q^3) \bL_{112} + (2 + 2q^3) \bL_{121} + (2 + 3q) \bL_{211} + q^3 \bL_{13} + \bL_{31} + (6q^2 + 12q^3 + 9q^4 + 3q^5 + q^6) \bL_{11111} + (6q^3 + 9q^4 + 3q^5 + 2q^6) \bL_{1112} + (2 + 8q^3 + 9q^4 + 2q^6) \bL_{1121} + (5 + 9q - 2q^3 + 9q^4) \bL_{1211} + (2 + 6q + 12q^2) \bL_{2111} + (q^3 + 3q^4 + q^6) \bL_{113} + (2 + 3q^4) \bL_{131} + (2 + 3q) \bL_{311} + (-q^3 + 6q^4) \bL_{122} + 6q^2 \bL_{212} + 5 \bL_{221}+\dots$} 
  \\[-8pt] & & 
  \\ \hline
\end{tabular}}}
\caption{Partial computations of $\oX_G(q)$ for some small graphs $G$. The ellipses ``$\dots$'' mask higher order terms  indexed by compositions $\alpha$ with  $|\alpha|>|V(G)|+1$.}\label{oX-tab}
\end{table}

\begin{example} If $K_n$ is the complete graph on  $[n]$ then 
\[ \oX_{K_n}(q)= \sum_{r=n}^\infty F_r^{(n)}    e_r
\quad\text{for}\quad F_r^{(n)} :=  \sum_{\substack{k_1,k_2,\dots,k_n \in \PP  \\ k_1+k_2+\dots+k_n = r}} \tbinom{r}{k_1, k_2,\dots,k_n}_q\]
where $(q)_n := \prod_{i\in[n]}(1-q^i)$ and $ \tbinom{r}{k_1, k_2,\dots,k_n}_q  = \frac{(q)_r}{(q)_{k_1} (q)_{k_2}\cdots (q)_{k_n}}$. 
\end{example}

When $q$ is a prime power, $F_r^{(n)}$ counts the \defn{strictly increasing flags} of $\FF_q$-subspaces $0=V_0 \subsetneq V_1 \subsetneq \dots \subsetneq V_n = \FF_q^r.$
The article \cite{Vinroot12} contains a recurrence for the
\defn{generalized Galois number} $G_r^{(n)}$,
which counts all increasing flags
 $0=V_0 \subseteq V_1 \subseteq \dots \subseteq V_n = \FF_q^r.$  
An inclusion-exclusion argument shows that
\be\textstyle
G_r^{(n)}  = \sum_{i=0}^n \tbinom{n}{i} F_r^{(i)}
\quand
F_r^{(n)}  = \sum_{i=0}^n (-1)^{n-i}\tbinom{n}{i} G_r^{(i)}
\ee
where  $F_r^{(0)} = G_r^{(0)}=0$ if $r>0$.
Vinroot shows in \cite{Vinroot12} that
\be\textstyle
G_{r+1}^{(n)} = \sum_{i=0}^{n-1} \tbinom{n}{i+1} (-1)^i \tfrac{(q)_r}{(q)_{r-i}} G^{(n)}_{r-i}
\ee
by using a recurrence for the $q$-multinomial coefficients, namely:
\be
\tbinom{r}{k_1, k_2,\dots,k_n}_q
=\sum_{\varnothing\neq J\subseteq [n]} (-1)^{|J|-1} \tfrac{(q)_{r-1}}{(q)_{r-|J|}} \tbinom{r - |J|}{\underline k - \underline e_J}_q
\ee
where $\underline k = (k_1,k_2,\dots,k_n)$
and where $\underline e_J=(e_1,e_2,\dots,e_n)$ is the $n$-tuple with $e_i = |\{i\} \cap J|$. 
 A more complicated recurrence holds for $F_r^{(n)}$.

\begin{proposition} For each $r \in \NN$ and $n \in \PP$ it holds that
  \[\textstyle F_{r+1}^{(n)}
=\sum_{i=0}^{n-1} \sum_{j=n-1-i}^n      \tbinom{n}{j}\tbinom{j}{n-1-i}   (-1)^{i} \tfrac{(q)_r}{(q)_{r-i}} F_{r-i}^{(j)}
\]
under the convention that $F^{(0)}_0 = 1$ and $F^{(n)}_r = 0$ if $r<n$ or $r>0=n$. 
\end{proposition}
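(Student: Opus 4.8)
The plan is to derive the recurrence for $F_r^{(n)}$ from Vinroot's recurrence for $G_r^{(n)}$ by passing through the two inclusion-exclusion relations between the strictly increasing and weakly increasing flag counts. First I would apply the relation $F_r^{(n)} = \sum_{i=0}^n (-1)^{n-i}\binom{n}{i}G_r^{(i)}$ at the index $r+1$, writing
\[
F_{r+1}^{(n)} = \sum_{m=0}^n (-1)^{n-m}\binom{n}{m}\, G_{r+1}^{(m)}.
\]
Next I would substitute Vinroot's recurrence $G_{r+1}^{(m)} = \sum_{i=0}^{m-1}\binom{m}{i+1}(-1)^i\frac{(q)_r}{(q)_{r-i}}G_{r-i}^{(m)}$ into each summand (the $m=0$ term drops out since $G_{r+1}^{(0)}=0$ for $r\ge 0$, so the recurrence is only invoked for $m\ge 1$), and then expand $G_{r-i}^{(m)} = \sum_{j=0}^m \binom{m}{j}F_{r-i}^{(j)}$ using the other relation. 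After pulling out the common factor $(-1)^i\frac{(q)_r}{(q)_{r-i}}F_{r-i}^{(j)}$ and interchanging the order of summation, the coefficient of this factor becomes the triple sum $\sum_{m}(-1)^{n-m}\binom{n}{m}\binom{m}{i+1}\binom{m}{j}$, where $m$ effectively ranges over $\max(i+1,j)\le m\le n$ (terms with smaller $m$ vanish because $\binom{m}{i+1}$ or $\binom{m}{j}$ is zero).

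The crux is therefore the purely combinatorial identity
\[
\sum_{m}(-1)^{n-m}\binom{n}{m}\binom{m}{i+1}\binom{m}{j} = \binom{n}{j}\binom{j}{n-1-i},
\]
which matches the derived coefficient with the one in the claimed formula and simultaneously explains the range $n-1-i\le j\le n$, since the right-hand side vanishes for $j<n-1-i$. To prove it I would first use the subset-of-a-subset identity $\binom{n}{m}\binom{m}{i+1}=\binom{n}{i+1}\binom{n-i-1}{m-i-1}$ to factor $\binom{n}{i+1}$ out of the sum, reindex by $m=i+1+t$, and recognize the remaining alternating sum as an iterated forward difference: using $\Delta^N\binom{t+a}{b}=\binom{t+a}{b-N}$ one has $\sum_{t=0}^{N}(-1)^{N-t}\binom{N}{t}\binom{t+a}{b}=\binom{a}{b-N}$, which I would apply with $N=n-1-i$, $a=i+1$, and $b=j$. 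This produces $\binom{n}{i+1}\binom{i+1}{\,j-n+1+i\,}$, and since both this product and $\binom{n}{j}\binom{j}{n-1-i}$ equal the trinomial coefficient $\binom{n}{\,n-1-i,\ j-n+1+i,\ n-j\,}$, the identity follows.

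The main obstacle here is bookkeeping rather than anything conceptual: keeping the three nested summation ranges consistent after the interchange, and confirming that the stated conventions ($F^{(0)}_0=1$, $F^{(n)}_r=0$ for $r<n$, and $F^{(n)}_r=0$ for $r>0=n$) make all the boundary instances hold, in particular when $r-i<j$ forces a vanishing term. I would guard against sign errors in the finite-difference evaluation by checking the cases $n=1$ and $n=2$ directly against the definition $F_r^{(n)}=\sum_{k_1+\dots+k_n=r,\,k_i\ge 1}\binom{r}{k_1,\dots,k_n}_q$; once the displayed binomial identity is established, the chain of substitutions above completes the proof.
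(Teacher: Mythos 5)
Your proposal is correct, but it takes a genuinely different route from the paper. The paper never passes through the generalized Galois numbers: it starts from the definition $F_{r+1}^{(n)}=\sum_{\underline k\in\PP^n}\binom{r+1}{\underline k}_q$, applies the $q$-multinomial recurrence term by term, and then determines the coefficient of each $F_{r-i}^{(j)}$ by a direct counting argument --- namely, counting how many pairs $(\underline k,J)$ have $\underline k-\underline e_J$ reducing (after deleting zero entries) to a given sequence in $\PP^{n-j}$, which yields $\binom{n}{j}\binom{n-j}{i+1-j}$ and hence, after $\binom{n}{j}=\binom{n}{n-j}$, the stated coefficient. You instead conjugate Vinroot's recurrence for $G_r^{(n)}$ through the two inclusion--exclusion relations and collapse the resulting triple sum via the identity $\sum_m(-1)^{n-m}\binom{n}{m}\binom{m}{i+1}\binom{m}{j}=\binom{n}{j}\binom{j}{n-1-i}$; I checked that identity (both sides are the trinomial coefficient $n!/\bigl((n-1-i)!\,(j-n+1+i)!\,(n-j)!\bigr)$, and your finite-difference evaluation of the alternating sum is sound), and the boundary conventions work out because $\tfrac{(q)_r}{(q)_{r-i}}=\prod_{l=0}^{i-1}(1-q^{r-l})$ vanishes when $i>r$, exactly as in the proposition itself. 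What each approach buys: yours reuses Vinroot's theorem as a black box and isolates all the combinatorics in one clean binomial identity, whereas the paper's argument is self-contained at the level of $q$-multinomial coefficients and gives a direct combinatorial meaning to the coefficients (counting reductions of compositions) without needing the detour through $G_r^{(n)}$ or the alternating-sum cancellation.
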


\begin{proof}
Starting from the definition, we have
\[
\ba F_{r+1}^{(n)}
= \sum_{{\underline k  \in \PP^n}} \tbinom{r+1}{\underline k }_q
&
=\sum_{{\underline k  \in \PP^n }} \sum_{\varnothing\neq J\subseteq [n]} (-1)^{|J|-1} \tfrac{(q)_r}{(q)_{r-|J|+1}} \tbinom{r+1 - |J|}{\underline k - \underline e_J}_q
\\&
=\sum_{\varnothing\neq J\subseteq [n]}  \sum_{{\underline k  \in \PP^n }}  (-1)^{|J|-1} \tfrac{(q)_r}{(q)_{r-|J|+1}}  \tbinom{r+1 - |J|}{\underline k - \underline e_J}_q
\\&
=\sum_{i=0}^{n-1}\sum_{\substack{ J\subseteq [n] \\ |J|=i+1}}  \sum_{{\underline k  \in \PP^n }}  (-1)^{i} \tfrac{(q)_r}{(q)_{r-i}}  \tbinom{r-i}{\underline k - \underline e_J}_q.
 \ea
 \]
As $\underline k $ varies over all sequences in $\PP^n$ and $J$ varies over all $i+1$ element subsets of $ [n] $,
 the modified sequence $\underline k - \underline e_J$ varies over all sequences in $\PP^n$ as well as some elements of $\NN^n$.
 Let $\reduce(\underline k - \underline e_J)$ be the sequence in $\PP^{m}$ for $m\leq n$ formed by removing all zero entries.
 Each element of $\PP^n$ arises as $\underline k - \underline e_J = \reduce(\underline k - \underline e_J)$ for exactly $\binom{n}{i+1}$ different pairs $(\underline k, J)$.
 More generally, if $j\in[i+1]$ then each of element of $\PP^{n-j}$ arises as $\mathsf{reduce}(\underline k - \underline e_J)$ 
 for exactly $\binom{n}{j}\binom{n-j}{i+1-j}$ pairs $(\underline k, J)$. So, continuing from the displayed equation, we get 
 \[F_{r+1}^{(n)}
=\sum_{i=0}^{n-1} \sum_{j=0}^{i+1} \sum_{\underline k' \in \PP^{n-j}} \tbinom{n}{j}\tbinom{n-j}{i+1-j}   (-1)^{i} \tfrac{(q)_r}{(q)_{r-i}}   \tbinom{r-i}{\underline k'}_q.\]
Now substitute the identities $\binom{n}{j} = \binom{n}{n-j}$ and $F_{r-i}^{(n-j)}= \sum_{\underline k' \in \PP^{n-j}}   \tbinom{r-i}{\underline k'}_q$.
\end{proof}

The result of applying the coproduct of $\mQSym$ to 
$\oX_G(q) $ cannot be expressed as a finite linear combination of
terms $\oX_{G_1}(q)\otimes \oX_{G_2}(q)$,
and $\oX_G(q)$ does not seem to naturally arise as the image in $\mQSym$ of a combinatorial LC-Hopf algebra.
In general $\oX_G(q)$ is neither $\bare$-positive nor multifundamental-positive (see the examples in Table~\ref{oX-tab}).
However, $\oX_G(q)$ does have a nontrivial positivity property
that is not shared by $X_G(\x,q,\mu)$.

A \defn{set-valued tableau} $T$ of shape $\lambda$ is an assignment of sets $T_{ij} \in \Set(\PP)$
to the cells $(i,j)$ in  
$\D_\lambda = \{ (i,j) \in \PP\times \PP : 1 \leq j \leq \lambda_i\}$
of a partition $\lambda$. We write $(i,j)\in T$ to indicate that $(i,j)$ belongs to the 
shape of $T$.  
A set-valued tableau $T$ is \defn{semistandard} if $T_{ij} \preceq T_{i,j+1}$ 
and $T_{ij} \prec T_{i+1,j}$
for all relevant positions.
Let $x^T: = \prod_{(i,j) \in T} \prod_{k \in T_{ij}} x_k$ and $|T| := \sum_{(i,j) \in T} |T_{ij}|$.

\begin{definition}
The \defn{symmetric Grothendieck function} of a partition $\lambda$ is 
the power series
$ \bars_\lambda := \sum_{ T\in \SetSSYT(\lambda)} (-1)^{|T|-|\lambda|} x^T \in \ZZ\llbracket x_1,x_2,\dots \rrbracket$
where $\SetSSYT(\lambda)$ is the set of all semistandard set-valued tableaux of shape $\lambda$.
\end{definition}

Each $\bars_\lambda $ is in $ \mSym$  and the set of all 
symmetric Grothendieck functions is another pseudobasis for $\mSym$ \cite{Buch2002}. 
When restricted to $n$ variables, these symmetric functions are  
the $K$-theory classes for the Schubert cells in the type $A$ Grassmannian; see the discussion in \cite[\S2.2]{CPS}.
 
%
%
%

We write $\mu \subseteq \lambda$ for two partitions if $\D_\mu \subseteq \D_\lambda$ and set $\D_{\lambda/\mu} := \D_\lambda \setminus \D_\mu$.
In the definition below, a \defn{semistandard tableau} of shape $\lambda/\mu$ means a filling of $\D_{\lambda/\mu}$
by positive integers such that each row is weakly increasing (as column indices increase)
and each column is strict increasing (as row indices increase).
 
\begin{definition}[{\cite[Def.~3.8]{CPS}}] Suppose $P$ is a finite poset and $\lambda$ is a partition.
A \defn{Grothendieck $P$-tableau} of shape $\lambda$ is a pair   $T=(U,V)$ 
with both of these properties: 
\ben
\item[(a)] $U$ is a filling of $\D_\mu$ by elements of $P$  for some partition $\mu\subseteq\lambda$,
such that each element of $P$ is in at least one cell,
and  for each $(i,j) \in \D_\mu$ 
one has
$U_{ij} <_P U_{i,j+1}\text{ if }(i,j+1) \in \D_\mu $ and 
$U_{ij} \not>_P U_{i+1,j}\text{ if }(i+1,j) \in \D_\mu$;
\item[(b)] $V$ is a semistandard tableau of shape $\lambda/\mu$, whose entries in each row $i$ are all less than $i$ 
(so $\D_{\lambda/\mu}$ must have no cells in the first row).
\een
Let $\oT_P$ be the set of Grothendieck $P$-tableaux.
Let $\lambda(T)$ be the shape of $T \in \oT_P$.
\end{definition}

\begin{example}\label{pGroth-ex}
When the elements of $P$ do not include any positive integers,
we can draw a Grothendieck $P$-tableau $T=(U,V)$ of shape $\lambda$ as simply a filling of $\D_\lambda$ in English notation,
as there is no ambiguity about which boxes belong to $U$ and which belong to $V$.
Suppose $P$ is the poset on $\{a,b,c,d,e\}$ with $a<c<d<e$ and $b<c<d<e$, while $a$ and $b$ are incomparable.
Then 
\[
\ytableausetup{boxsize=0.4cm,aligntableaux=center}
\begin{ytableau}
a & c & d & e \\
b & d & e & 1 \\
c  & 1 & 2   \\
3
\end{ytableau}
\quand
 \begin{ytableau}
a & c & d & e \\
b  & c & d & e \\
a  & e & 2   \\
1 
\end{ytableau}
\]
are both Grothendieck $P$-tableaux of shape $\lambda=(4,4,3,1)$.
\end{example}


Suppose $P$ is a finite poset on a subset of $\ZZ$,
and let $G = \inc(P)$.
Choose some $T = (U,V) \in \oT_P$ and let $\mu$ be the shape of $U$.
A \defn{$G$-inversion} of $T$
is a pair of cells $(i,j),(k,l) \in \D_\mu$ 
containing incomparable elements of $P$ such that
 $i<k$ and 
$ U_{ij} > U_{kl}$. 
Let $\inv_G(T)$ be the number of all $G$-inversions of $T$.

\begin{example}
Define $P$ as in Example~\ref{pGroth-ex} and set $a=-1$, $b=-2$, $c=-3$, $d=-4$, and $e=-5$.
Then the two Grothendieck $P$-tableaux in Example~\ref{pGroth-ex}
both have only a single $G$-inversion, given by the pair of cells $(1,1)$ and $ (2,1)$.
\end{example}

One of the main results of \cite{CPS} 
establishes that if $G$ is the incomparability graph $\inc(P)$ of a $(3+1)$-free poset $P$ then 
$\bX_G =\sum_{T \in \oT_P} \bars_{\lambda(T)}$.
This is a $K$-theoretic generalization of a theorem of Gasharov \cite{gash}
showing that $X_G$ is Schur positive for the same family of \defn{claw-free incomparability graphs}.

Shareshian and Wachs \cite{SW2016} discovered a refinement of Gasharov's result,
which has the following $K$-theoretic extension.
Both our theorem and its predecessor in  \cite{SW2016}  
only apply to incomparability graphs of natural unit interval orders,
which make up a proper subset of all claw-free incomparability graphs.
This loss of generality is to be expected, since we need $P$ to be a natural unit interval order to deduce
from Lemma~\ref{des-nuo-lem} that $\oX_{\inc(P)}(q)$ is symmetric.

\begin{theorem}  If $G=\inc(P)$ for a natural unit interval order $P$ then 
\[\textstyle\oX_G(q) =\sum_{T \in \oT_P} q^{\inv_G(T)} \bars_{\lambda(T)}.\]
\end{theorem}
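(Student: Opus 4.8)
The plan is to pin down the coefficients of $\oX_G(q)$ in the pseudobasis $\{\bars_\lambda\}$ of $\mSym$. Since $\oX_G(q)$ is symmetric by Lemma~\ref{des-nuo-lem} and the symmetric Grothendieck functions form a pseudobasis of $\mSym$, it suffices to show that the coefficient of $\bars_\lambda$ equals $\sum_{T \in \oT_P \,:\, \lambda(T)=\lambda} q^{\inv_G(T)}$. I would work throughout with the descent form $\oX_G(q) = \sum_\kappa q^{\des_G(\kappa)} x^\kappa$ supplied by Lemma~\ref{des-nuo-lem}, summed over proper set-valued colorings $\kappa$, because the descent statistic interacts more transparently with the column-strictness of tableaux than the ascent statistic does. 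The claim is well posed in the linearly compact setting: for each fixed $x$-multidegree only finitely many shapes $\lambda(T)$ and tableaux $T$ contribute, and $\inv_G(T)$ is then bounded, so each monomial coefficient is a polynomial in $q$.

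First I would expand the proposed right-hand side into monomials using the defining sum $\bars_\lambda = \sum_{S \in \SetSSYT(\lambda)} (-1)^{|S|-|\lambda|} x^S$, turning $\sum_{T} q^{\inv_G(T)} \bars_{\lambda(T)}$ into a signed sum of monomials $q^{\inv_G(T)}(-1)^{|S|-|\lambda(T)|} x^S$ indexed by pairs $(T,S)$ with $T=(U,V)\in\oT_P$ and $S$ a semistandard set-valued filling of the shape $\lambda(T)$. The goal is then a sign-reversing involution on these pairs that preserves both the monomial $x^S$ and the power $q^{\inv_G(T)}$, whose fixed points are in a weight- and $q$-preserving bijection with the proper set-valued colorings $\kappa$ counted by $\oX_G(q)$, each fixed point carrying $\des_G(\kappa)=\inv_G(T)$, $x^\kappa = x^S$, and positive sign. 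This is exactly the $K$-theoretic, $q$-graded refinement of Gasharov's argument: setting $q=1$ recovers $\oX_G(1)=\bX_G=\sum_{T\in\oT_P}\bars_{\lambda(T)}$, the expansion of Crew--Pechenik--Spirkl, so an involution with the required fixed-point set already exists in the ungraded setting and the task is to upgrade it so as to respect the $q$-grading. The clan decomposition $\oX_G(q) = \sum_{\mu} \tfrac{1}{[\mu]_q!}\, X_{\Cl_\mu(G)}(q)$ of \eqref{this-eq} furnishes the conceptual bridge: each $\Cl_\mu(G)$ is again the incomparability graph of a natural unit interval order, so the Shareshian--Wachs Schur expansion \cite[Thm.~6.3]{SW2016} applies to every $X_{\Cl_\mu(G)}(q)$, and assembling these Schur-positive pieces (with the $[\mu]_q!$ normalizations absorbing the orderings of colors within each clan) both reconstructs the $\bars$-expansion and identifies the exponent as $\inv_G$.

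The main obstacle is the compatibility of the $q$-statistic with the involution; everything at the level of $x$-monomials and signs is already governed by the $q=1$ case. Concretely, I must verify (i) that the involution never changes $\inv_G(T)$, and (ii) that at each fixed point the descent count $\des_G(\kappa)$ of the associated coloring equals $\inv_G(T)$. Both rest on the natural unit interval order hypothesis, which (as in \cite[Prop.~4.1]{SW2016}) forces incomparable elements to lie strictly between comparable ones, so that a descent of $\kappa$ along an edge $\{u,v\}$ matches an inversion of $T$ between cells holding $P$-incomparable entries, and conversely. I would isolate this matching of $\des_G$ with $\inv_G$ along the fixed-point bijection as a lemma, adapting Lemma~\ref{gamma-lem} together with the proof of \cite[Thm.~6.3]{SW2016}, and then check that the cancelling involution acts by local modifications of the semistandard part $V$ of $T$ and of the set-valued filling $S$, away from the comparability data of $U$ that determines $\inv_G$, so that $\inv_G$ is left untouched. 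Granting this, summing the fixed-point contributions yields $\sum_\kappa q^{\des_G(\kappa)} x^\kappa = \oX_G(q)$, which completes the proof.
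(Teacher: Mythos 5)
Your overall frame is right in several respects: you use the descent form from Lemma~\ref{des-nuo-lem}, you recognize that the content of the theorem is a $q$-graded refinement of the Crew--Pechenik--Spirkl expansion $\bX_G=\sum_{T\in\oT_P}\bars_{\lambda(T)}$, and you correctly locate the difficulty in making a sign-reversing involution compatible with the $q$-statistic. But there are two genuine gaps. First, the setup is off: the paper does not expand $\bars_\lambda$ into its signed monomial sum and cancel pairs $(T,S)$. Instead it repeats the computation of \cite[Thm.~3.9]{CPS} through \cite[Eq.~(3.4)]{CPS} to express the coefficient as $c_\lambda(q)=\sum_{(\pi,A)}\sgn(\pi)\,q^{\inv_G(A)}$, a signed sum over \emph{Grothendieck $P$-arrays} $(\pi,A)$ with $\pi\in S_N$ a permutation carrying the sign. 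Your plan never explains how to pass from an equality of signed monomial sums to an equality of coefficients in the (non-orthonormal) pseudobasis $\{\bars_\lambda\}$, nor what the objects of your involution actually are; the ``involution with the required fixed-point set'' that ``already exists in the ungraded setting'' lives on arrays $(\pi,A)$, not on your pairs $(T,S)$.

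Second, and more seriously, your central claim --- that the cancelling involution acts ``away from the comparability data of $U$ that determines $\inv_G$, so that $\inv_G$ is left untouched'' --- is exactly what fails. The CPS involution swaps poset-element content between two adjacent rows $r-1$ and $r$ of the array, so it directly disturbs the data computing $\inv_G$; the paper states explicitly that the CPS involution is sign-reversing but \emph{not} inversion-preserving. The essential new content of the proof is the repair: one partitions the poset elements near the flaw into sets $I_i(A)$, $E_i(A)$, $O_i(A)$ according to the parity of connected components of an induced subgraph $H(A)$ of $G$ (whose components are increasing paths by \cite[Lem.~4.4]{SW2016}), swaps only the odd-component parts $O_{r-1}(A)$ and $O_r(A)$ between the rows, and then a parity count over each odd component shows $\inv_G(A)=\inv_G(A')$. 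This uses the natural unit interval order hypothesis in an essential and nontrivial way. Your proposal asserts that no such modification is needed, so it would not survive the step you defer. The clan-decomposition route via \eqref{this-eq} and \cite[Thm.~6.3]{SW2016} that you float as an alternative is also not carried out in the paper and would require reassembling Schur expansions of the graphs $\Cl_\mu(G)$ with $[\mu]_q!$ denominators into Grothendieck functions, which is far from immediate.
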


Table~\ref{bars-tab} shows some examples of these positive $\bars$-expansions.

\begin{proof}
Our argument is a hybrid of the proofs of \cite[Thm.~3.9]{CPS} and \cite[Thm.~6.3]{SW2016}.
To remove any ambiguity in the following definitions, we assume that the elements of $P$ are all
negative integers,
so that as a set $P$ is disjoint from $\PP=\{1,2,3,\dots\}$.
Fix a partition $\lambda$ and let $n = |\lambda|$ and $N \geq 2n$.

Following \cite{CPS}, define a \defn{Grothendieck $P$-array of type $\lambda$}
to be a pair $(\pi, A)$ where $ \pi \in S_N$ and $A$ is a map
$\PP \times \PP \to P \sqcup \{\emptyset\}\sqcup \PP$, written $(i,j) \mapsto A_{ij}$,
with all of the following properties:
\bei
\item $A_{ij}=\emptyset$ unless $i \leq \ell(\lambda)$ and $j \leq \lambda_{\pi(i)} - \pi(i)+i$;
\item for each $p \in P$ there is some $(i,j) \in \PP \times \PP$ with $A_{ij} = p$;
\item if $A_{ij} \in P$ when $j>1$, then $A_{i(j-1)} \in P$ and $A_{i(j-1)} <_P A_{ij}$;
\item if $A_{ij} \in \PP$, then $A_{ij} \leq \pi(i) - 1$; and 
\item if $A_{ij} \in \PP$ when $j>1$, then
$A_{i(j-1)} \in P$, or $A_{i(j-1)} \in \PP$ and $A_{i(j-1)} \leq A_{ij}$.
\eei
One can think of $A$ as a partial filling of an infinite matrix by elements of the poset $P$
(viewing $A_{ij} = \emptyset$ as an unfilled position),
with left-justified rows
that each consist of an increasing chain in $P$ followed by a weakly increasing
sequence of positive integers.

If $(\pi, A)$ is a Grothendieck $P$-array of type $\lambda$,
then define $\inv_G(A)$ to be the set of pairs of positions
$(i,j),(k,l) \in \PP\times \PP$ with $i<k$,
such that $A_{ij}$ and $A_{kl} $
are incomparable elements of $P$ with
$ A_{ij} > A_{kl}$. 

Write $\oX_G(q) =\sum_\lambda c_\lambda(q) \bars_{\lambda}$
where $c_\lambda(q) \in \ZZ\llbracket q\rrbracket$.
Then, 
repeating the part of the  proof of \cite[Thm.~3.9]{CPS}
through \cite[Eq.\,(3.4)]{CPS},
using the formula $\oX_G(q)= \sum_{\kappa} q^{\des_{G}(\kappa)} x^\kappa$
from Lemma~\ref{des-nuo-lem} in place of $\bX_G$,
 shows that  
\be\label{clam-eq}
c_\lambda(q) = \sum_{(\pi, A)} \sgn(\pi) q^{\inv_G(A)}
\ee
where the sum ranges over all Grothendieck $P$-arrays of type $\lambda$.

As in \cite{CPS}, we refer to a position $(i,j)$ with $i\geq 2$
as a \defn{flaw} of a Grothendieck $P$-array $(\pi,A)$ if one of the following occurs:
\begin{itemize}
\item $A_{ij} \in P$ and either $A_{(i-1)j }\notin P$ or $A_{ij} <_P A_{(i-1)j}$; or 

\item $A_{ij} \in\PP$ and either $A_{(i-1)j}=\emptyset$ or $A_{(i-1)j} \in \PP$ with $A_{ij} \leq A_{(i-1)j}$.
\end{itemize}
One can check that a Grothendieck $P$-array $(\pi,A)$ has no flaws if and only if $\pi=1$ is the identify permutation and $A\in \oT_P$ corresponds to a  Grothendieck $P$-tableau.

The goal now is to produce a sign-reversing, inversion-preserving involution $\Psi$ of the set of all Grothendieck $P$-arrays $(\pi,A)$ that have flaws. The existence of such a map will imply that \eqref{clam-eq} reduces to $c_\lambda(q) = \sum_{T\in \oT_P,\lambda(T)=\lambda}  q^{\inv_G(A)}
$. The involution used in the proof of \cite[Thm.~3.9]{CPS} is sign-reversing but not
inversion-preserving. We can correct this by incorporating ideas in the proof of \cite[Thm.~6.3]{SW2016}.

Given a flawed Grothendieck $P$-array $(\pi,A)$,
let $c$ be the minimal column in which a flaw occurs, and let $r$ be the maximal row such that $(r,c)$ is a flaw. Let $C_{r-1}(A)$ be the set of elements of $P$ that appear in row $r-1$ weakly to the right of column $c$. Let $C_{r}(A)$ be the set of elements of $P$ that appear in row $r$ strictly to the right of column $c$.
Then let $H(A)$ be the subgraph of $G$ induced on $C_{r-1}(A) \cup C_{r}(A)$. 
For $i \in \{r,r+1\}$ let $O_i(A)$ be the set of $x \in C_i(A)$ that belong to a connected component of odd size in $H(A)$. 
All elements of $P$ that belong to both $C_{r-1}(A)$ and $C_r(A)$ 
are comparable in $P$ to all vertices in $H(A)$, so must belong to connected components of size one, and are therefore in both $O_{r-1}(A)$ and $O_r(A)$.

Set $E_i(A) := C_i(A) \setminus O_i(A)$. Then let $I_{r-1}(A)$ be the set of elements of $P$ that appear in row $r-1$ strictly to the left of column $c$, and let $I_r(A)$ be the set of elements of $P$ in row $r$ weakly to the left of column $c$. 
By repeating the argument in the proof of \cite[Thm.~6.3]{SW2016}, 
which uses the hypothesis that $P$ is $(3+1)$-free as a natural unit interval order,
one checks that $I_{r-1}(A) \cup E_{r-1}(A) \cup O_{r}(A)$ is a chain in $P$ in which $I_{r-1}(A)$ is an initial chain, and that 
$I_{r}(A) \cup E_{r}(A) \cup O_{r-1}(A)$ is a chain in $P$ in which $I_{r}(A)$ is an initial chain. Therefore we can define $\Psi(\pi,A)=(\pi',A')$ where $\pi' $ is the product of $\pi$ and the transposition $(r-1\, r)$, where $A'$ is formed from $A$ by changing row $r-1$ and $r$ as follows:
\begin{itemize}
\item replace row $r-1$ by the chain $I_{r-1}(A) \cup E_{r-1}(A) \cup O_r(A)$ followed by any integers strictly to the right of column $c$ in row $r$ of $A$;

\item replace row $r$ by the chain $I_{r}(A) \cup E_{r}(A) \cup O_{r-1}(A)$ followed by any integers weakly to the right of column $c$ in row $r-1$ of $A$.
\end{itemize}
Then one can check that $(\pi',A')$ is a Grothendieck $P$-array also with a flaw at $(r,c)$,
and that $\Psi(\pi',A') = (\pi,A)$. The details are similar to what is presented in the proof of \cite[Thm.~3.9]{CPS}.

Finally, we explain why $\inv_G(A) = \inv_G(A')$. 
This relies on the fact that the connected components in $H(A)$ are paths with numerically increasing vertices \cite[Lem.~4.4]{SW2016}.
In each such component of even size exactly half the elements must appear in row $r-1$ of $A$ and exactly half must appear in row $r$ of $A$. 
In the components of odd size, every other element must appear in row $r-1$ of $A$ with the elements in between appearing in row $r$ of $A$. 
Thus in any given odd sized component with $2k+1>1$ vertices, 
either vertices $1$, $3$, $5$, \dots, $2k-1$ contribute inversions to $\inv_G(A)$
while vertices $2$, $4$, $6$, \dots, $2k$ contribute inversions to $\inv_G(A')$, or vice versa. Either way, we have $\inv_G(A) = \inv_G(A')$.
\end{proof}

 \begin{table}[htb]
\centerline{
\scalebox{1}{
\begin{tabular}{|c|c|l|}
\hline & & 
\\[-8pt]
Order $P$   & $G=\inc(P)$  & $\bX_G(q)$ in the Grothendieck basis $\{\bars_\lambda\}$ 
\\[-8pt] & & 
\\ \hline & & 
\\[-8pt]  
  \begin{tikzpicture}[scale=1,baseline=(void.base)]
  \node at (0, 0)(1){1};
    \node at (0.75, 0.433)(2){2};
    \node at (0, 0.866)(3){3};
    \node at (0.5,0.433)(void) {};
    \draw[black, thick,->] (1) to (3);
  \end{tikzpicture} 
  & 
    \begin{tikzpicture}[scale=1,baseline=(void.base)]
  \node at (0, 0)(1){1};
    \node at (1, 0)(2){2};
    \node at (0.5, 0.866)(3){3};
    \node at (0.5,0.433)(void) {};
    \draw[black, thick] (1) -- (2);
    \draw[black, thick] (2) -- (3);
  \end{tikzpicture} 
  & \pbox[c]{12cm}{\small $(1 + 2q + q^2) \bars_{111} + q \bars_{21} + (6 + 12q + 10q^2 + 4q^3 + q^4) \bars_{1111} + (4q + 3q^2) \bars_{211} + q \bars_{22} + (12q + 15q^2 + 6q^3 + 2q^4) \bars_{2111} + (4q + 4q^2) \bars_{221} + (12q + 20q^2 + 8q^3 + 3q^4) \bars_{2211} + (4q + 5q^2) \bars_{222} + (12q + 25q^2 + 11q^3 + 4q^4) \bars_{2221} + (12q + 29q^2 + 15q^3 + 5q^4) \bars_{2222}+ \dots $
}  %
 \\[-8pt] & & 
 \\ \hline & & 
\\[-8pt]  
    \begin{tikzpicture}[scale=1,baseline=(void.base)]
  \node at (0, 0.433)(1){1};
    \node at (0.5, 0.433)(2){2};
    \node at (1, 0.433)(3){3};
    \node at (0.5,.433)(void) {};
  \end{tikzpicture} & 
    \begin{tikzpicture}[scale=1,baseline=(void.base)]
  \node at (0, 0)(1){1};
    \node at (1, 0)(2){2};
    \node at (0.5, 0.866)(3){3};
    \node at (0.5,.433)(void) {};
    \draw[black, thick] (3) -- (1);
    \draw[black, thick] (2) -- (1);
    \draw[black, thick] (2) -- (3);
  \end{tikzpicture} & 
  \pbox[c]{12cm}{\small$(1 + 2q + 2q^2 + q^3) \bars_{111} + (6 + 12q + 15q^2 + 12q^3 + 6q^4 + 3q^5) \bars_{1111}
 + \dots $}
   \\[-8pt] & & 
 \\ \hline & & 
\\[-8pt]  
     \begin{tikzpicture}[scale=1,baseline=(void.base)]%
  \node at (0, 0)(1){1};
    \node at (1, 0)(2){2};
    \node at (1, 1)(4){4};
     \node at (0, 1)(3){3};
         \node at (0.5,.5)(void) {};
    \draw[black, thick,->] (1) -- (3);
    \draw[black, thick,->] (1) -- (4);
    \draw[black, thick,->] (2) -- (4); 
  \end{tikzpicture} & 
     \begin{tikzpicture}[scale=1,baseline=(void.base)]
  \node at (0, 0)(1){1};
    \node at (1, 0)(2){2};
    \node at (1, 1)(3){3};
     \node at (0, 1)(4){4};
         \node at (0.5,.5)(void) {};
    \draw[black, thick] (3) -- (2);
    \draw[black, thick] (2) -- (1);
    \draw[black, thick] (4) -- (3);
  \end{tikzpicture}  & 
  \pbox[c]{12cm}{\small$(1 + 3q + 3q^2 + q^3) \bars_{1111} + (2q + 2q^2) \bars_{211} + (q + q^2) \bars_{22} + (8 + 24q + 30q^2 + 20q^3 + 8q^4 + 2q^5) \bars_{11111} + (12q + 18q^2 + 10q^3 + 2q^4) \bars_{2111} + (6q + 10q^2 + 4q^3) \bars_{221} + (48q + 90q^2 + 82q^3 + 46q^4 + 16q^5 + 6q^6) \bars_{21111} + (24q + 52q^2 + 42q^3 + 17q^4 + 3q^5) \bars_{2211} + (10q + 21q^2 + 12q^3 + 3q^4) \bars_{222} + (80q + 204q^2 + 228q^3 + 166q^4 + 76q^5 + 32q^6 + 8q^7) \bars_{22111} + (40q + 105q^2 + 102q^3 + 59q^4 + 18q^5 + 4q^6) \bars_{2221} + (128q + 385q^2 + 492q^3 + 426q^4 + 236q^5 + 123q^6 + 40q^7 + 12q^8) \bars_{22211} + (56q + 163q^2 + 183q^3 + 131q^4 + 56q^5 + 21q^6 + 3q^7 + q^8) \bars_{2222} + (192q + 635q^2 + 908q^3 + 898q^4 + 580q^5 + 363q^6 + 144q^7 + 64q^8 + 12q^9 + 4q^{10}) \bars_{22221} + (256q + 891q^2 + 1381q^3 + 1514q^4 + 1099q^5 + 797q^6 + 379q^7 + 205q^8 + 65q^9 + 28q^{10} + 4q^{11} + 3q^{12}) \bars_{22222} + \dots $}
    \\[-8pt] & & 
 \\ \hline & & 
\\[-8pt]  
     \begin{tikzpicture}[scale=1,baseline=(void.base)]
  \node at (0, 0)(1){1};
    \node at (1, 0)(2){2};
    \node at (1, 1)(4){4};
     \node at (0, 1)(3){3};
         \node at (0.5,.5)(void) {};
    \draw[black, thick,->] (1) -- (3);
    \draw[black, thick,->] (1) -- (4);
  \end{tikzpicture} & 
     \begin{tikzpicture}[scale=1,baseline=(void.base)]
  \node at (0, 0)(1){1};
    \node at (1, 0)(2){2};
    \node at (1, 1)(3){3};
     \node at (0, 1)(4){4};
         \node at (0.5,.5)(void) {};
    \draw[black, thick] (3) -- (2);
    \draw[black, thick] (2) -- (1);
    \draw[black, thick] (4) -- (3);
    \draw[black, thick] (2) -- (4);
  \end{tikzpicture}  & 
  \pbox[c]{12cm}{\small$(1 + 3q + 4q^2 + 3q^3 + q^4) \bars_{1111} + (q + 2q^2 + q^3) \bars_{211} + (8 + 24q + 37q^2 + 37q^3 + 25q^4 + 13q^5 + 5q^6 + q^7) \bars_{11111} + (6q + 14q^2 + 14q^3 + 7q^4 + 3q^5) \bars_{2111} + (q + 3q^2 + 2q^3) \bars_{221} + (24q + 62q^2 + 82q^3 + 69q^4 + 45q^5 + 23q^6 + 10q^7 + 3q^8) \bars_{21111} + (6q + 20q^2 + 23q^3 + 13q^4 + 6q^5) \bars_{2211} + (q + 4q^2 + 3q^3) \bars_{222} + (24q + 86q^2 + 126q^3 + 116q^4 + 80q^5 + 42q^6 + 20q^7 + 6q^8) \bars_{22111} + (6q + 26q^2 + 34q^3 + 21q^4 + 11q^5) \bars_{2221} + (24q + 110q^2 + 183q^3 + 181q^4 + 134q^5 + 72q^6 + 36q^7 + 12q^8) \bars_{22211} + (6q + 31q^2 + 46q^3 + 31q^4 + 18q^5) \bars_{2222} + (24q + 133q^2 + 252q^3 + 267q^4 + 210q^5 + 117q^6 + 61q^7 + 22q^8) \bars_{22221} + (24q + 151q^2 + 318q^3 + 363q^4 + 300q^5 + 181q^6 + 98q^7 + 37q^8) \bars_{22222} + \dots $}
    \\[-8pt] & & 
 \\ \hline & & 
\\[-8pt]  
     \begin{tikzpicture}[scale=1,baseline=(void.base)]
  \node at (0, 0)(1){1};
    \node at (0.5, 0.5)(2){2};
    \node at (1, 0.5)(3){3};
     \node at (0, 1)(4){4};
         \node at (0.5,.5)(void) {};
    \draw[black, thick,->] (1) -- (4);
  \end{tikzpicture} & 
     \begin{tikzpicture}[scale=1,baseline=(void.base)]
  \node at (0, 0)(1){1};
    \node at (1, 0)(2){2};
    \node at (1, 1)(3){3};
     \node at (0, 1)(4){4};
         \node at (0.5,.5)(void) {};
    \draw[black, thick] (2) -- (3);
    \draw[black, thick] (1) -- (2);
    \draw[black, thick] (4) -- (3);
    \draw[black, thick] (1) -- (3);
    \draw[black, thick] (2) -- (4);
  \end{tikzpicture}  & 
  \pbox[c]{12cm}{\small$(1 + 3q + 5q^2 + 5q^3 + 3q^4 + q^5) \bars_{1111} + (q^2 + q^3) \bars_{211} + (8 + 24q + 44q^2 + 54q^3 + 48q^4 + 34q^5 + 18q^6 + 8q^7 + 2q^8) \bars_{11111} + (5q^2 + 9q^3 + 6q^4 + 4q^5) \bars_{2111} + (q^2 + q^3) \bars_{221} + (17q^2 + 41q^3 + 46q^4 + 42q^5 + 23q^6 + 14q^7 + 5q^8) \bars_{21111} + (5q^2 + 9q^3 + 7q^4 + 5q^5) \bars_{2211} + (q^2 + q^3) \bars_{222} + (17q^2 + 41q^3 + 52q^4 + 50q^5 + 29q^6 + 18q^7 + 7q^8) \bars_{22111} + (5q^2 + 9q^3 + 8q^4 + 6q^5) \bars_{2221} + (17q^2 + 41q^3 + 58q^4 + 58q^5 + 36q^6 + 23q^7 + 9q^8) \bars_{22211} + (5q^2 + 9q^3 + 9q^4 + 7q^5) \bars_{2222} + (17q^2 + 41q^3 + 64q^4 + 66q^5 + 44q^6 + 29q^7 + 11q^8) \bars_{22221} + (17q^2 + 41q^3 + 69q^4 + 73q^5 + 53q^6 + 36q^7 + 13q^8) \bars_{22222} + \dots $}
      \\[-8pt] & & 
 \\ \hline & & 
\\[-8pt]  
     \begin{tikzpicture}[scale=1,baseline=(void.base)]
  \node at (0, 0.5)(1){1};
    \node at (0.4, 0.5)(2){2};
    \node at (0.8, 0.5)(3){3};
     \node at (1.2, 0.5)(4){4};
         \node at (0.5,.5)(void) {};
  \end{tikzpicture} & 
     \begin{tikzpicture}[scale=1,baseline=(void.base)]
  \node at (0, 0)(1){1};
    \node at (1, 0)(2){2};
    \node at (1, 1)(3){3};
     \node at (0, 1)(4){4};
         \node at (0.5,.5)(void) {};
    \draw[black, thick] (3) -- (2);
    \draw[black, thick] (2) -- (1);
    \draw[black, thick] (4) -- (3);
    \draw[black, thick] (1) -- (3);
    \draw[black, thick] (2) -- (4);
    \draw[black, thick] (1) -- (4);
  \end{tikzpicture}  & 
  \pbox[c]{12cm}{\small$(1 + 3q + 5q^2 + 6q^3 + 5q^4 + 3q^5 + q^6) \bars_{1111} + (8 + 24q + 44q^2 + 60q^3 + 64q^4 + 56q^5 + 40q^6 + 24q^7 + 12q^8 + 4q^9) \bars_{11111} + \dots $}
   \\[-8pt] & & 
  \\ \hline
\end{tabular}}}
\caption{Partial Grothendieck expansions of $\oX_G(q)$ for incomparability graphs $G=\inc(P)$
of some natural unit interval orders. The ellipses ``$\dots$'' mask higher order terms  indexed by partitions $\lambda$ with  $\ell(\lambda)>|V(G)|+1$.}\label{bars-tab}
\end{table}

\printbibliography

\end{document}